\newcommand{\minimize}[2]{\ensuremath{\underset{\substack{{#1}}}%
{\mathrm{minimize}}\;\;#2 }}
\newcommand{\menge}[2]{\big\{{#1} \mid {#2}\big\}}
\newcommand{\emp}{\ensuremath{{\varnothing}}}
\newcommand{\scal}[2]{\left\langle{#1}\mid {#2} \right\rangle}
\newcommand{\exi}{\ensuremath{\exists\,}}
\newcommand{\HH}{\ensuremath{\mathcal H}}
\newcommand{\GG}{\ensuremath{\mathcal G}}
\newcommand{\bary}{\ensuremath{\widetilde{\boldsymbol{y}}}}
\newcommand{\barp}{\ensuremath{\widetilde{\boldsymbol{p}}}}
\newcommand{\barq}{\ensuremath{\widetilde{\boldsymbol{q}}}}
\newcommand{\BL}{\ensuremath{\EuScript B}\,}
\newcommand{\HHH}{\ensuremath{\boldsymbol{\mathcal H}}}
\newcommand{\KKK}{\ensuremath{\boldsymbol{\mathcal K}}}
\newcommand{\GGG}{\ensuremath{\boldsymbol{\mathcal G}}}
\newcommand{\sri}{\ensuremath{\operatorname{sri}}}
\newcommand{\RP}{\ensuremath{\left[0,+\infty\right[}}
\newcommand{\RPP}{\ensuremath{\,\left]0,+\infty\right[}}
\newcommand{\RX}{\ensuremath{\,\left]-\infty,+\infty\right]}}
\newcommand{\NN}{\ensuremath{\mathbb N}}
\newcommand{\dom}{\ensuremath{\operatorname{dom}}}
\newcommand{\gr}{\ensuremath{\operatorname{gra}}}
\newcommand{\prox}{\ensuremath{\operatorname{prox}}}
\newcommand{\inte}{\ensuremath{\operatorname{int}}}
\newcommand{\cart}{\ensuremath{\mbox{\huge{$\times$}}}}
\newcommand{\RPX}{\ensuremath{{[0,\pinf]}}}
\newcommand{\Argmin}{\ensuremath{\operatorname{Argmin}}}
\newcommand{\ran}{\ensuremath{\operatorname{ran}}}
\newcommand{\zer}{\ensuremath{\operatorname{zer}}}
\newcommand{\Id}{\ensuremath{\operatorname{Id}}}
\newcommand{\weakly}{\ensuremath{\rightharpoonup}}
\newcommand{\pinf}{\ensuremath{+\infty}}
\newtheorem{theorem}{Theorem}[section]
\newtheorem{lemma}[theorem]{Lemma}
\newtheorem{corollary}[theorem]{Corollary}
\newtheorem{proposition}[theorem]{Proposition}
\newtheorem{definition}[theorem]{Definition}
\theoremstyle{plain}{\theorembodyfont{\rmfamily}
}
\theoremstyle{plain}{\theorembodyfont{\rmfamily}
}
\theoremstyle{plain}{\theorembodyfont{\rmfamily}
}
\theoremstyle{plain}{\theorembodyfont{\rmfamily}
}
\theoremstyle{plain}{\theorembodyfont{\rmfamily}
\newtheorem{problem}[theorem]{Problem}}
\theoremstyle{plain}{\theorembodyfont{\rmfamily}
\newtheorem{remark}[theorem]{Remark}}
\theoremstyle{plain}{\theorembodyfont{\rmfamily}
}
\numberwithin{equation}{section}
\begin{document}

\title{\sffamily\huge A Monotone{\,\LARGE$+$\,}Skew 
Splitting Model for\\
Composite Monotone Inclusions in Duality\footnote{Contact author: 
P. L. Combettes, {\ttfamily plc@math.jussieu.fr},
phone: +33 1 4427 6319, fax: +33 1 4427 7200.
This work was supported by the Agence Nationale de la 
Recherche under grant ANR-08-BLAN-0294-02.}}

\author{Luis M. Brice\~{n}o-Arias$^{1,2}$ and 
Patrick L. Combettes$^1$\\[5mm]
\small $\!^1$UPMC Universit\'e Paris 06\\
\small Laboratoire Jacques-Louis Lions -- UMR CNRS 7598\\
\small 75005 Paris, France\\[4mm]
\small $\!^2$UPMC Universit\'e Paris 06\\
\small \'Equipe Combinatoire et Optimisation -- UMR CNRS 7090\\
\small 75005 Paris, France 
}

\date{~}

\maketitle

\vskip 8mm

\begin{abstract} \noindent
The principle underlying this paper is the basic observation
that the problem of simultaneously solving a large class of 
composite monotone inclusions and their duals can be reduced to 
that of finding a zero of the sum of a maximally monotone operator 
and a linear skew-adjoint operator. 
An algorithmic framework is developed 
for solving this generic problem in a Hilbert space setting. 
New primal-dual splitting 
algorithms are derived from this framework for inclusions 
involving composite monotone operators, and convergence results are 
established.
These algorithms draw their simplicity and efficacy from the
fact that they operate in a fully decomposed fashion in the sense 
that the monotone operators and the linear transformations 
involved are activated separately at each iteration.
Comparisons with existing methods are made and applications to 
composite variational problems are demonstrated.
\end{abstract}

{\small 
\noindent
{\bfseries 2000 Mathematics Subject Classification:}
Primary 47H05; Secondary 65K05, 90C25.

\noindent
{\bfseries Keywords:}
composite operator,
convex optimization,
decomposition,
duality,
Fenchel-Rockafellar duality,
minimization algorithm,
monotone inclusion, 
monotone operator, 
operator splitting
}

\newpage

\section{Introduction}
A wide range of problems in areas such as optimization, variational 
inequalities, partial differential equations, mechanics, economics, 
signal and image processing, or traffic theory can be reduced to 
solving inclusions involving monotone set-valued operators in a 
Hilbert space $\HH$, say
\begin{equation}
\label{e:basic}
\text{find}\;\;x\in\HH\quad\text{such that}\quad 
z\in Mx,
\end{equation}
where $M\colon\HH\to 2^{\HH}$ is monotone and $z\in\HH$, e.g., 
\cite{Banf09,Smms05,Facc03,Fuku96,Glow89,Merc79,Roc76a,Tsen90,%
Tsen91,ZeidXX}.
In many formulations of this type, the operator $M$ can be 
expressed as the sum of two monotone operators, 
one of which is the composition of a monotone operator with a 
linear transformation and its adjoint. 
In such situations, it is often desirable to also 
solve an associated dual inclusion
\cite{Aldu05,Atto96,Ther96,Ecks99,Gaba83,Merc80,Mosc72,Penn00,%
Robi99,Robi01,Rock67}. 
The present paper is concerned with the numerical solution of such 
composite inclusion problems in duality. More formally, the basic 
problem we consider is the following. 

\begin{problem}
\label{prob:1}
Let $\HH$ and $\GG$ be two real Hilbert spaces, 
let $A\colon\HH\to 2^{\HH}$ and $B\colon\GG\to 2^{\GG}$ be 
maximally monotone, let $L\colon\HH\to\GG$ be linear and bounded,
let $z\in\HH$, and let $r\in\GG$.
The problem is to solve the primal inclusion
\begin{equation}
\label{e:primal}
\text{find}\;\;x\in\HH\quad\text{such that}\quad 
z\in Ax+L^*B(Lx-r)
\end{equation}
together with the dual inclusion
\begin{equation}
\label{e:dual}
\text{find}\;\;v\in\GG\quad\text{such that}\quad 
-r\in -LA^{-1}(z-L^*v)+B^{-1}v.
\end{equation}
The set of solutions to \eqref{e:primal} is denoted by 
${\mathcal P}$ and the set of solutions to \eqref{e:dual} by 
${\mathcal D}$.
\end{problem}

A classical instance of the duality scheme described in 
Problem~\ref{prob:1} is the Fenchel-Rockafellar framework 
\cite{Rock67} which, under suitable constraint qualification, 
corresponds to letting $A$ and $B$ be subdifferentials of proper 
lower semicontinuous convex functions $f\colon\HH\to\RX$ and 
$g\colon\GG\to\RX$, respectively. 
In this scenario, the problems in duality are 
\begin{equation}
\label{e:2010-11-18f}
\minimize{x\in\HH}{f(x)+g(Lx-r)-\scal{x}{z}}
\end{equation}
and 
\begin{equation}
\label{e:2010-11-18g}
\minimize{v\in\GG}{f^*(z-L^*v)+g^*(v)+\scal{v}{r}}.
\end{equation}
Extensions of the Fenchel-Rockafellar framework to variational 
inequalities were considered in \cite{Aldu05,Ekel99,Gaba83,Mosc72}, 
while extensions to saddle function problems were proposed in 
\cite{Mcli74}. On the other hand, general monotone operators were 
investigated in \cite{Atto96,Ther96,Reic05,Merc80} in the case 
when $\GG=\HH$ and $L=\Id$. The general duality setting 
described in Problem~\ref{prob:1} appears in 
\cite{Ecks99,Penn00,Robi99}.

Our objective is to devise an algorithm which solves \eqref{e:primal}
and \eqref{e:dual} simultaneously, and which uses the operators $A$, 
$B$, and $L$ separately. In the literature, several splitting 
algorithms are available for solving the primal problem 
\eqref{e:primal}, but they are restricted by stringent hypotheses.
Let us set 
\begin{equation}
A_1\colon\HH\to 2^{\HH}\colon x\mapsto -z+Ax\quad\text{and}\quad
A_2\colon\HH\to 2^{\HH}\colon x\mapsto L^*B(Lx-r), 
\end{equation}
and observe that solving \eqref{e:primal} is equivalent to
finding a zero of $A_1+A_2$.
If $B$ is single-valued and cocoercive (its inverse is strongly
monotone), then so is $A_2$, and \eqref{e:primal} can be solved by 
the forward-backward algorithm \cite{Opti04,Merc79,Tsen91}. 
If $B$ is merely Lipschitzian, or even just continuous, 
so is $A_2$, and \eqref{e:primal} can then be solved
via the algorithm proposed in \cite{Tsen00}. These algorithms
employ the resolvent of $A_1$, which is easily derived from that 
of $A$, and explicit applications of $A_2$, i.e., of $B$ and $L$.
They are however limited in scope by the fact that $B$ must
be single-valued and smooth. The main splitting algorithm to find
a zero of $A_1+A_2$ when both operators are set-valued 
is the Douglas-Rachford algorithm \cite{Joca09,Ecks92,Lion79,Svai10}.
This algorithm requires that both operators be maximally monotone and
that their resolvents be computable to within some quantifiable 
error. Unfortunately, these conditions are seldom met in the 
present setting since $A_2$ may not be maximally monotone 
\cite{Penn00,Robi99} and, more importantly, since there is no 
convenient rule to compute the resolvent of $A_2$ in terms of 
$L$ and the resolvent of $B$ unless stringent conditions are 
imposed on $L$ (see \cite[Proposition~23.23]{Livre1} and 
\cite{Fuku96}). 

Our approach is motivated by the classical Kuhn-Tucker theory
\cite{Rock74}, which asserts that points $\overline{x}\in\HH$ 
and $\overline{v}\in\GG$ satisfying the conditions 
\begin{equation}
\label{e:2010-11-18a}
(0,0)\in\big(-z+\partial f(\overline{x})+L^*\overline{v},\,
r+\partial g^*(\overline{v})-L\overline{x}\big)
\end{equation}
are solutions to \eqref{e:2010-11-18f} and \eqref{e:2010-11-18g}, 
respectively. By analogy, it is natural to consider 
the following problem in conjunction with Problem~\ref{prob:1}. 

\begin{problem}
\label{prob:2}
In the setting of Problem~\ref{prob:1}, 
let $\KKK=\HH\oplus\GG$ and set
\begin{equation}
\label{e:nyc2010-10-31z}
\boldsymbol{M}\colon\KKK\to 2^{\KKK}\colon(x,v)\mapsto
(-z+Ax)\times(r+B^{-1}v)\quad\text{and}\quad
\boldsymbol{S}\colon\KKK\to\KKK\colon(x,v)\mapsto(L^*v,-Lx).
\end{equation}
The problem is to 
\begin{equation}
\label{e:trial}
\text{find}\;\;\boldsymbol{x}\in\KKK\quad\text{such that}\quad 
\boldsymbol{0}\in\boldsymbol{M}\boldsymbol{x}
+\boldsymbol{S}\boldsymbol{x}.
\end{equation}
\end{problem}

The investigation of this companion problem may have various
purposes \cite{Aldu05,Ecks99,Penn00,Robi99}.
Ours is to exploit its simple structure to derive a new 
splitting algorithm to solve efficiently Problem~\ref{prob:1}. 
The crux of our approach is the simple observation that 
\eqref{e:trial} reduces the original primal- dual problem
\eqref{e:primal}--\eqref{e:dual} to 
that of finding a zero of the sum of a maximally monotone operator 
$\boldsymbol{M}$ and a bounded linear skew-adjoint transformation 
$\boldsymbol{S}$. In Section~\ref{sec:2} we establish the
convergence of an inexact splitting algorithm proposed in its 
original form in \cite{Tsen00}. Each iteration of this 
forward-backward-forward scheme performs successively an 
explicit step on $\boldsymbol{S}$, an implicit step on 
$\boldsymbol{M}$, and another explicit step on $\boldsymbol{S}$.
We then review the tight connections existing between 
Problem~\ref{prob:1} and Problem~\ref{prob:2}
and, in particular, the fact that solving the latter 
provides a solution to the former. In Section~\ref{sec:3},
we apply the forward-backward-forward algorithm to the 
monotone{\small$+$}skew Problem~\ref{prob:2} and obtain a new 
type of splitting algorithm for solving \eqref{e:primal} and 
\eqref{e:dual} simultaneously.
The main feature of this scheme, that distinguishes it from existing
techniques, is that at each iteration it employs the operators $A$, 
$B$, and $L$ separately without requiring any additional assumption 
to those stated above except, naturally, existence of solutions.
Using a product space technique, we then obtain a parallel splitting
method for solving the $m$-term inclusion
\begin{equation}
\label{e:2010-11-18x}
\text{find}\;\; x\in\HH\quad\text{such that}\quad
z\in\sum_{i=1}^mL_i^*B_i(L_ix-r_i),
\end{equation}
where each maximally monotone operator $B_i$ acts on a Hilbert 
space $\GG_i$, $r_i\in\GG_i$, and $L_i\colon\HH\to\GG_i$ is 
linear and bounded. Applications to variational problems are 
discussed in Section~\ref{sec:4}, where we provide a proximal
splitting scheme for solving the primal dual problem
\eqref{e:2010-11-18f}--\eqref{e:2010-11-18g}, as well as one 
for minimizing the sum of $m$ composite functions.

{\bfseries Notation.}
We denote the scalar products of $\HH$ and $\GG$
by $\scal{\cdot}{\cdot}$ and the associated norms by $\|\cdot\|$.
$\BL(\HH,\GG)$ is the space of bounded linear operators from $\HH$ 
to $\GG$, $\BL(\HH)=\BL(\HH,\HH)$, and the symbols $\weakly$ and 
$\to$ denote respectively weak and strong convergence. 
Moreover, $\HH\oplus\GG$ denotes the Hilbert direct sum of $\HH$
and $\GG$.
The projector onto a nonempty closed convex 
set $C\subset\HH$ is denoted by $P_C$, and 
its normal cone operator by $N_C$, i.e., 
\begin{equation}
\label{e:normalcone}
N_C\colon\HH\to 2^{\HH}\colon x\mapsto
\begin{cases}
\menge{u\in\HH}{(\forall y\in C)\;\;\scal{y-x}{u}\leq 0},
&\text{if}\;x\in C;\\
\emp,&\text{otherwise.}
\end{cases}
\end{equation}
Let $M\colon\HH\to 2^{\HH}$ be a set-valued operator.
We denote by $\ran M=\menge{u\in\HH}{(\exi x\in\HH)\;u\in Mx}$ 
the range of $M$, by $\dom M=\menge{x\in\HH}{Mx\neq\emp}$ its
domain, by $\zer M=\menge{x\in\HH}{0\in Mx}$ its set of zeros, 
by $\gr M=\menge{(x,u)\in\HH\times\HH}{u\in Mx}$ its 
graph, and by $M^{-1}$ its inverse, i.e., the operator with graph
$\menge{(u,x)\in\HH\times\HH}{u\in Mx}$. The resolvent of $M$ is
$J_M=(\Id+M)^{-1}$. Moreover, $M$ is monotone if
\begin{equation}
(\forall (x,y)\in\HH\times\HH)(\forall(u,v)\in Mx\times My)\quad
\scal{x-y}{u-v}\geq 0,
\end{equation}
and maximally so if there exists no monotone operator 
$\widetilde{M}\colon\HH\to 2^{\HH}$ such that $\gr M\subset\gr
\widetilde{M}\neq\gr M$.
In this case, $J_M$ is a nonexpansive operator defined everywhere
in $\HH$. For background on convex analysis and monotone operator 
theory, the reader is referred to \cite{Livre1,Zali02}.

\section{Preliminary results}
\label{sec:2}

\subsection{Technical facts}

The following results will be needed subsequently.

\begin{lemma}{\rm\cite[Lemma~3.1]{Else01}}
\label{l:1}
Let $(\alpha_n)_{n\in\NN}$ be a sequence in $\RP$, 
let $(\beta_n)_{n\in\NN}$ be a sequence in $\RP$, and let
$(\varepsilon_n)_{n\in\NN}$ be a summable sequence in $\RP$ such
that $(\forall n\in\NN)$ 
$\alpha_{n+1}\leq\alpha_n-\beta_n+\varepsilon_n$.
Then $(\alpha_n)_{n\in\NN}$ converges and 
$(\beta_n)_{n\in\NN}$ is summable.
\end{lemma}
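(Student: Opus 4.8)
The plan is to convert the \emph{approximate} monotonicity expressed by the recursion into an \emph{exact} one by a tail-sum correction. First I would set $\xi_n=\sum_{k\geq n}\varepsilon_k$ for every $n\in\NN$; this is well defined in $\RP$ because $(\varepsilon_n)_{n\in\NN}$ is summable, it satisfies $\xi_n\to 0$, and $\xi_n=\varepsilon_n+\xi_{n+1}$. Then I would introduce the auxiliary sequence $\gamma_n=\alpha_n+\xi_n$ and combine the hypothesis $\alpha_{n+1}\leq\alpha_n-\beta_n+\varepsilon_n$ with the identity $\xi_n=\varepsilon_n+\xi_{n+1}$ to get
\begin{equation}
\gamma_{n+1}=\alpha_{n+1}+\xi_{n+1}\leq\alpha_n-\beta_n+\varepsilon_n+\xi_{n+1}=\alpha_n+\xi_n-\beta_n=\gamma_n-\beta_n\leq\gamma_n.
\end{equation}
Hence $(\gamma_n)_{n\in\NN}$ is nonincreasing, and since $\gamma_n\geq\alpha_n\geq 0$, it is bounded below; therefore it converges to some $\gamma\in\RP$.

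Next, because $\alpha_n=\gamma_n-\xi_n$ and $\xi_n\to 0$, the sequence $(\alpha_n)_{n\in\NN}$ converges (to $\gamma$). Finally, for the summability of $(\beta_n)_{n\in\NN}$, I would iterate the inequality $\beta_n\leq\gamma_n-\gamma_{n+1}$: summing it over $n\in\{k\in\NN\mid k\leq N\}$ telescopes to $\sum_{n\leq N}\beta_n\leq\gamma_{n_0}-\gamma_{N+1}\leq\gamma_{n_0}$, where $n_0$ is the first index. Since the partial sums of the nonnegative sequence $(\beta_n)_{n\in\NN}$ are bounded above, $\sum_{n\in\NN}\beta_n<\pinf$.

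There is no genuine obstacle here; the only ingredient that is not purely mechanical is the choice of the correction term $\xi_n$, which absorbs the perturbation $\varepsilon_n$ and restores monotonicity. One could equally well avoid introducing $(\gamma_n)_{n\in\NN}$ and argue by direct summation: adding $\alpha_{k+1}\leq\alpha_k-\beta_k+\varepsilon_k$ between two arbitrary indices $n\leq m$ yields $\alpha_m+\sum_{k=n}^{m-1}\beta_k\leq\alpha_n+\xi_n$, which at once gives boundedness of $(\alpha_n)_{n\in\NN}$ and summability of $(\beta_n)_{n\in\NN}$, and also $\limsup_{m}\alpha_m\leq\alpha_n+\xi_n$ for every $n$, hence $\limsup_{m}\alpha_m\leq\liminf_{n}\alpha_n$, so the bounded sequence $(\alpha_n)_{n\in\NN}$ converges.
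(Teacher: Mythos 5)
Your proof is correct: the tail-sum correction $\xi_n=\sum_{k\geq n}\varepsilon_k$ turns the perturbed recursion into an exact monotonicity statement for $\gamma_n=\alpha_n+\xi_n$, and both the convergence of $(\alpha_n)_{n\in\NN}$ and the summability of $(\beta_n)_{n\in\NN}$ follow cleanly, as does your alternative direct-summation variant. The paper itself gives no proof of this lemma (it simply cites \cite[Lemma~3.1]{Else01}), and your argument is the standard one for this quasi-Fej\'er-type result, so there is nothing further to compare.
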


\begin{lemma}{\rm\cite[Theorem~3.8]{Else01}}
\label{l:2}
Let $C$ be a nonempty subset of $\HH$ and let $(x_n)_{n\in\NN}$ 
be a sequence in $\HH$. Suppose that, for every $x\in C$, there 
exists a summable sequence $(\varepsilon_n)_{n\in\NN}$ in $\RP$ 
such that
\begin{equation}
\label{e:defqfejerIII}
(\forall n\in\NN)\;\;\|x_{n+1}-x\|^2\leq\|x_n-x\|^2+\varepsilon_n,
\end{equation}
and that every sequential weak cluster point of $(x_n)_{n\in\NN}$
is in $C$. Then $(x_n)_{n\in\NN}$ converges weakly to a point in
$C$.
\end{lemma}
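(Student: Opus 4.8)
The plan is to analyze the final statement, which is Lemma~\ref{l:2}: a quasi-Fej\'er type convergence principle. Although the lemma is quoted from \cite{Else01}, a self-contained proof proceeds as follows. Fix $x\in C$ and let $(\varepsilon_n)_{n\in\NN}$ be the associated summable sequence satisfying \eqref{e:defqfejerIII}. The first step is to invoke Lemma~\ref{l:1} with $\alpha_n=\|x_n-x\|^2$, $\beta_n=0$, to conclude that $(\|x_n-x\|^2)_{n\in\NN}$ converges; in particular $(x_n)_{n\in\NN}$ is bounded, so it has weak cluster points, and by hypothesis every such cluster point lies in $C$. Thus $C$ is nonempty after intersecting with the weak cluster point set, and it remains to rule out the possibility of two distinct weak cluster points.

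The second step is the standard ``Opial-type'' argument. Suppose $x$ and $y$ are both weak sequential cluster points of $(x_n)_{n\in\NN}$, with subsequences $x_{k_n}\weakly x$ and $x_{l_n}\weakly y$; by what precedes, $x\in C$ and $y\in C$, so both limits $\ell_x=\lim_n\|x_n-x\|$ and $\ell_y=\lim_n\|x_n-y\|$ exist in $\RP$. Expanding the identity
\begin{equation}
\|x_n-x\|^2-\|x_n-y\|^2=2\scal{x_n}{y-x}+\|x\|^2-\|y\|^2
\end{equation}
and passing to the limit along $(k_n)_{n\in\NN}$ and then along $(l_n)_{n\in\NN}$ gives
\begin{equation}
\ell_x^2-\ell_y^2=2\scal{x}{y-x}+\|x\|^2-\|y\|^2=-\|x-y\|^2
\quad\text{and}\quad
\ell_x^2-\ell_y^2=2\scal{y}{y-x}+\|x\|^2-\|y\|^2=\|x-y\|^2,
\end{equation}
whence $\|x-y\|^2=-\|x-y\|^2$, forcing $x=y$. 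Therefore $(x_n)_{n\in\NN}$ has a unique weak cluster point, which, being a cluster point, it must converge to; and that point is in $C$.

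I do not anticipate a genuine obstacle here, since this is a classical result and the two ingredients (Lemma~\ref{l:1} for the monotone-up-to-summable-error convergence of $\|x_n-x\|^2$, and the polarization identity for uniqueness of the weak cluster point) are exactly the right tools. The one point requiring a little care is the logical bookkeeping: the hypothesis only supplies a summable error sequence \emph{after} fixing $x\in C$, so one must apply Lemma~\ref{l:1} separately for each relevant $x$ (namely, for the candidate cluster points $x$ and $y$) before the limits $\ell_x,\ell_y$ can be asserted to exist. Once that is observed, the argument closes cleanly, and no boundedness or reflexivity subtleties arise beyond the standard fact that bounded sequences in a Hilbert space admit weakly convergent subsequences.
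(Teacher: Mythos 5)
Your argument is correct and complete. The paper itself offers no proof of this lemma --- it is quoted verbatim from \cite[Theorem~3.8]{Else01} --- and your self-contained derivation (Lemma~\ref{l:1} applied with $\beta_n=0$ to get convergence of $\|x_n-x\|^2$ for each $x\in C$ and hence boundedness, followed by the Opial-type polarization argument to show the weak cluster point is unique) is exactly the classical route taken in that reference; your remark that the summable error sequence must be chosen anew for each candidate cluster point is the one genuine subtlety, and you handle it properly.
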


\begin{definition}{\rm\cite[Definition~2.3]{Sico10}}
\label{d:demir}
An operator $M\colon\HH\to 2^{\HH}$ is \emph{demiregular} at
$x\in\dom M$ if, for every sequence $((x_n,u_n))_{n\in\NN}$ in 
$\gr M$ and every $u\in Mx$ such that $x_n\weakly x$ and 
$u_n\to u$, we have $x_n\to x$.
\end{definition}

\begin{lemma}{\rm\cite[Proposition~2.4]{Sico10}}
\label{l:2009-09-20}
Let $M\colon\HH\to 2^{\HH}$ and let $x\in\dom M$.
Then $M$ is demiregular at $x$ in each of the following cases.
\begin{enumerate}
\item
\label{l:2009-09-20i}
$M$ is uniformly monotone at $x$, i.e., there exists 
an increasing function $\phi\colon\RP\to\RPX$ that vanishes only 
at $0$ such that
$(\forall u\in Mx)(\forall (y,v)\in\gr M)$
$\scal{x-y}{u-v}\geq\phi(\|x-y\|)$.
In particular, $M$ is uniformly monotone, i.e.,
these inequalities hold for every $x\in\dom M$ and, a fortiori, $M$
is $\alpha$-strongly monotone, i.e., $M-\alpha\Id$ is monotone
for some $\alpha\in\RPP$.
\item
\label{l:2009-09-20iv-}
$J_M$ is compact, i.e., for every bounded set $C\subset\HH$,
the closure of $J_M(C)$ is compact. In particular, 
$\dom M$ is boundedly relatively compact, i.e., the intersection of 
its closure with every closed ball is compact.
\item
\label{l:2009-09-20vi}
$M\colon\HH\to\HH$ is single-valued with a single-valued continuous
inverse.
\item
\label{l:2009-09-20vii}
$M$ is single-valued on $\dom M$ and $\Id-M$, i.e., 
for every bounded sequence $(x_n)_{n\in\NN}$ in $\dom M$ such 
that $(Mx_n)_{n\in\NN}$ converges strongly, 
$(x_n)_{n\in\NN}$ admits a strong cluster point.
\end{enumerate}
\end{lemma}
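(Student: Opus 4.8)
The plan is to establish each of the four items separately; the common mechanism is to extract, in each case, enough compactness or coercivity from the stated hypothesis to promote the weak convergence $x_n\weakly x$ to strong convergence. So fix a sequence $((x_n,u_n))_{n\in\NN}$ in $\gr M$ and a point $u\in Mx$ with $x_n\weakly x$ and $u_n\to u$, and recall that $(x_n)_{n\in\NN}$ is therefore bounded.

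\textbf{Item~\ref{l:2009-09-20i}.} Invoking uniform monotonicity at $x$ with $(y,v)=(x_n,u_n)\in\gr M$ yields, for every $n\in\NN$,
\begin{equation}
0\leq\phi(\norm{x-x_n})\leq\scal{x-x_n}{u-u_n}\leq\norm{x-x_n}\,\norm{u-u_n}.
\end{equation}
Since $(\norm{x-x_n})_{n\in\NN}$ is bounded and $\norm{u-u_n}\to 0$, the last quantity tends to $0$, hence $\phi(\norm{x-x_n})\to 0$. It remains to deduce $\norm{x-x_n}\to 0$: if this failed, some subsequence would satisfy $\norm{x-x_{n_k}}\geq\varepsilon$ for a suitable $\varepsilon\in\RPP$, and then $\phi(\norm{x-x_{n_k}})\geq\phi(\varepsilon)>0$ since $\phi$ is increasing and vanishes only at $0$, contradicting $\phi(\norm{x-x_n})\to 0$. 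The ``in particular'' assertions are clear, since uniform monotonicity, and a fortiori $\alpha$-strong monotonicity, of $M$ entails uniform monotonicity at every point of its domain.

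\textbf{Items~\ref{l:2009-09-20iv-},~\ref{l:2009-09-20vi},~\ref{l:2009-09-20vii}.} For~\ref{l:2009-09-20vi}, $(x_n,u_n)\in\gr M$ means $x_n=M^{-1}u_n$, and likewise $x=M^{-1}u$, so the continuity of $M^{-1}$ and $u_n\to u$ give $x_n\to x$ at once. For~\ref{l:2009-09-20iv-}, from $u_n\in Mx_n$ we have $x_n=J_M(x_n+u_n)$ and $(x_n+u_n)_{n\in\NN}$ is bounded; compactness of $J_M$ then produces a subsequence converging strongly to some $y$, and since that subsequence still converges weakly to $x$ we get $y=x$. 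Applying this to an arbitrary subsequence of $(x_n)_{n\in\NN}$ shows that every subsequence admits a further subsequence converging strongly to $x$, whence $x_n\to x$; the bounded relative compactness of $\dom M=\ran J_M$ is then immediate. For~\ref{l:2009-09-20vii}, $(x_n)_{n\in\NN}$ is bounded, lies in $\dom M$, and $(Mx_n)_{n\in\NN}=(u_n)_{n\in\NN}$ converges strongly, so by hypothesis it has a strong cluster point, which is necessarily $x$; the same subsequence argument yields $x_n\to x$.

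The argument is essentially routine; the only place calling for a little care is item~\ref{l:2009-09-20i}, where one must (a) notice that the scalar product on the right of the displayed chain vanishes, which is just Cauchy--Schwarz together with boundedness of $(x_n)_{n\in\NN}$ rather than a weak--strong duality argument, and (b) pass from $\phi(\norm{x-x_n})\to 0$ to $\norm{x-x_n}\to 0$ using only monotonicity of $\phi$ and its strict positivity away from the origin. In items~\ref{l:2009-09-20iv-} and~\ref{l:2009-09-20vii} the mild subtlety is that the hypotheses only directly deliver a strongly convergent subsequence, so one concludes via the familiar device of combining uniqueness of the weak limit with the subsequence principle.
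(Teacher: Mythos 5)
This lemma is stated in the paper as a quoted result, \cite[Proposition~2.4]{Sico10}, and the paper supplies no proof of its own, so there is no internal argument to compare yours against. Your proof is correct and self-contained: in each item you correctly reduce the task to upgrading $x_n\weakly x$ to $x_n\to x$, and the individual arguments — the squeeze $0\leq\phi(\|x-x_n\|)\leq\|x-x_n\|\,\|u-u_n\|\to 0$ followed by the contradiction argument exploiting that $\phi$ is increasing and vanishes only at $0$ in \ref{l:2009-09-20i}; the inclusion $x_n\in J_M(x_n+u_n)$ with $(x_n+u_n)_{n\in\NN}$ bounded, relative compactness, and the subsequence principle in \ref{l:2009-09-20iv-}; direct continuity of $M^{-1}$ in \ref{l:2009-09-20vi}; and the demicompactness hypothesis plus the subsequence principle in \ref{l:2009-09-20vii} — are all sound. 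The only slip is peripheral: in \ref{l:2009-09-20iv-} the ``in particular'' clause, by analogy with the other items, asserts that bounded relative compactness of $\dom M$ is a \emph{special case implying} compactness of $J_M$ (via $J_M(C)\subset\dom M$ and nonexpansiveness of $J_M$ when $M$ is monotone), whereas you read it as a consequence of compactness of $J_M$; that reverse implication is not immediate, since a bounded subset of $\dom M=\ran J_M$ need not be the image under $J_M$ of a bounded set. This does not affect the demiregularity claims, which are what the lemma is used for.
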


\subsection{An inexact forward-backward-forward algorithm}

Our algorithmic framework will hinge on the following splitting 
algorithm, which was proposed in the error-free case in 
\cite{Tsen00}. We provide an analysis of the asymptotic behavior 
of a inexact version of this method which is of interest in its 
own right.

\begin{theorem}
\label{t:broome-st2010-10-27}
Let $\HHH$ be a real Hilbert space,
let $\boldsymbol{A}\colon\HHH\to 2^{\HHH}$ be maximally monotone,
and let $\boldsymbol{B}\colon\HHH\to\HHH$ be monotone.
Suppose that $\zer(\boldsymbol{A}+\boldsymbol{B})\neq\emp$ and that
$\boldsymbol{B}$ is $\beta$-Lipschitzian
for some $\beta\in\RPP$.
Let $(\boldsymbol{a}_n)_{n\in\NN}$, $(\boldsymbol{b}_n)_{n\in\NN}$, 
and $(\boldsymbol{c}_n)_{n\in\NN}$ be sequences 
in $\HHH$ such that 
\begin{equation}
\label{e:broome-st2010-10-27g}
\sum_{n\in\NN}\|\boldsymbol{a}_n\|<\pinf,\quad
\sum_{n\in\NN}\|\boldsymbol{b}_n\|<\pinf,
\quad\text{and}\quad
\sum_{n\in\NN}\|\boldsymbol{c}_n\|<\pinf, 
\end{equation}
let $\boldsymbol{x}_0\in\HHH$, let 
$\varepsilon\in\left]0,1/(\beta+1)\right[$,
let $(\gamma_n)_{n\in\NN}$ be a sequence in 
$[\varepsilon,(1-\varepsilon)/\beta]$,
and set
\begin{equation}
\label{e:rio2010-10-11u}
(\forall n\in\NN)\quad 
\begin{array}{l}
\left\lfloor
\begin{array}{l}
\boldsymbol{y}_n=\boldsymbol{x}_n-
\gamma_n(\boldsymbol{B}\boldsymbol{x}_n+\boldsymbol{a}_n)\\
\boldsymbol{p}_n=J_{\gamma_n\boldsymbol{A}}\,\boldsymbol{y}_n
+\boldsymbol{b}_n\\
\boldsymbol{q}_n=\boldsymbol{p}_n-\gamma_n(\boldsymbol{B}
\boldsymbol{p}_n+\boldsymbol{c}_n)\\
\boldsymbol{x}_{n+1}=\boldsymbol{x}_n-
\boldsymbol{y}_n+\boldsymbol{q}_n.
\end{array}
\right.\\[2mm]
\end{array}
\end{equation}
Then the following hold for some
$\boldsymbol{\overline{x}}\in\zer(\boldsymbol{A}+\boldsymbol{B})$.
\begin{enumerate}
\item
\label{t:broome-st2010-10-27i}
$\sum_{n\in\NN}\|\boldsymbol{x}_n-\boldsymbol{p}_n\|^2<\pinf$ and
$\sum_{n\in\NN}\|\boldsymbol{y}_n-\boldsymbol{q}_n\|^2<\pinf$.
\item
\label{t:broome-st2010-10-27ii}
$\boldsymbol{x}_n\weakly\boldsymbol{\overline{x}}$ and 
$\boldsymbol{p}_n\weakly\boldsymbol{\overline{x}}$. 
\item
\label{t:broome-st2010-10-27iii}
Suppose that one of the following is satisfied.
\begin{enumerate}
\item
\label{t:broome-st2010-10-27iiia}
$\boldsymbol{A}+\boldsymbol{B}$ is demiregular at
$\boldsymbol{\overline{x}}$.
\item
\label{t:broome-st2010-10-27iiib}
$\boldsymbol{A}$ or $\boldsymbol{B}$ is uniformly 
monotone at $\boldsymbol{\overline{x}}$.
\item
\label{t:broome-st2010-10-27iiic}
$\inte\zer(\boldsymbol{A}+\boldsymbol{B})\neq\emp$.
\end{enumerate}
Then $\boldsymbol{x}_n\to\boldsymbol{\overline{x}}$ and 
$\boldsymbol{p}_n\to\boldsymbol{\overline{x}}$. 
\end{enumerate}
\end{theorem}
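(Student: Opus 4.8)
The plan is to mirror Tseng's error-free analysis, recasting the iteration \eqref{e:rio2010-10-11u} as a perturbed proximal-point step for $\boldsymbol{A}+\boldsymbol{B}$ and then feeding the resulting quasi-Fej\'er estimate into Lemmas~\ref{l:1} and~\ref{l:2}. Fix $\boldsymbol{\overline{x}}\in\zer(\boldsymbol{A}+\boldsymbol{B})$. First, the resolvent identity $\boldsymbol{p}_n-\boldsymbol{b}_n=J_{\gamma_n\boldsymbol{A}}\boldsymbol{y}_n$ gives $\gamma_n^{-1}(\boldsymbol{y}_n-\boldsymbol{p}_n+\boldsymbol{b}_n)\in\boldsymbol{A}(\boldsymbol{p}_n-\boldsymbol{b}_n)$; adding $\boldsymbol{B}(\boldsymbol{p}_n-\boldsymbol{b}_n)$ and inserting the formula for $\boldsymbol{y}_n$ shows that
\[
\boldsymbol{v}_n:=\gamma_n^{-1}(\boldsymbol{x}_n-\boldsymbol{p}_n+\boldsymbol{b}_n)
-\boldsymbol{B}\boldsymbol{x}_n+\boldsymbol{B}(\boldsymbol{p}_n-\boldsymbol{b}_n)-\boldsymbol{a}_n
\in(\boldsymbol{A}+\boldsymbol{B})(\boldsymbol{p}_n-\boldsymbol{b}_n),
\]
and a short computation with the last line of \eqref{e:rio2010-10-11u} yields $\boldsymbol{x}_{n+1}=\boldsymbol{x}_n-\gamma_n\boldsymbol{v}_n+\boldsymbol{r}_n$ with $\boldsymbol{r}_n=\boldsymbol{b}_n-\gamma_n\boldsymbol{c}_n+\gamma_n\big(\boldsymbol{B}(\boldsymbol{p}_n-\boldsymbol{b}_n)-\boldsymbol{B}\boldsymbol{p}_n\big)$. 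Since $\gamma_n\beta\leq 1-\varepsilon$, the $\beta$-Lipschitz continuity of $\boldsymbol{B}$ gives $\|\boldsymbol{r}_n\|\leq 2\|\boldsymbol{b}_n\|+\beta^{-1}\|\boldsymbol{c}_n\|$, so $\sum_n\|\boldsymbol{r}_n\|<\pinf$; similarly $\|\boldsymbol{v}_n\|\leq(\varepsilon^{-1}+\beta)\big(\|\boldsymbol{x}_n-\boldsymbol{p}_n\|+\|\boldsymbol{b}_n\|\big)+\|\boldsymbol{a}_n\|$.

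Next I would expand $\|\boldsymbol{x}_n-\gamma_n\boldsymbol{v}_n-\boldsymbol{\overline{x}}\|^2$, use the monotonicity of $\boldsymbol{A}+\boldsymbol{B}$ together with $\boldsymbol{0}\in(\boldsymbol{A}+\boldsymbol{B})\boldsymbol{\overline{x}}$ to bound $\scal{\boldsymbol{x}_n-\boldsymbol{\overline{x}}}{\boldsymbol{v}_n}$ below by $\scal{\boldsymbol{x}_n-\boldsymbol{p}_n+\boldsymbol{b}_n}{\boldsymbol{v}_n}$, and then complete the square exactly as in the error-free case. Invoking the $\beta$-Lipschitz bound on $\boldsymbol{B}$ and absorbing the terms in $\boldsymbol{a}_n$ into a small multiple of $\|\boldsymbol{x}_n-\boldsymbol{p}_n\|^2$ via Young's inequality — permissible because $1-\gamma_n^2\beta^2\geq\varepsilon(2-\varepsilon)>0$ on the step-size window — produces, for some $\kappa>0$ and some $(\mu_n)_{n\in\NN}$ dominated by a constant multiple of $\|\boldsymbol{a}_n\|^2+\|\boldsymbol{b}_n\|^2$,
\[
\|\boldsymbol{x}_n-\gamma_n\boldsymbol{v}_n-\boldsymbol{\overline{x}}\|^2
\leq\|\boldsymbol{x}_n-\boldsymbol{\overline{x}}\|^2-\kappa\|\boldsymbol{x}_n-\boldsymbol{p}_n\|^2+\mu_n .
\]
Dropping the negative term and taking square roots gives $\|\boldsymbol{x}_{n+1}-\boldsymbol{\overline{x}}\|\leq\|\boldsymbol{x}_n-\boldsymbol{\overline{x}}\|+\sqrt{\mu_n}+\|\boldsymbol{r}_n\|$ with $\sqrt{\mu_n}+\|\boldsymbol{r}_n\|$ summable, so $(\|\boldsymbol{x}_n-\boldsymbol{\overline{x}}\|)_{n\in\NN}$ is bounded; hence, using the nonexpansiveness of $J_{\gamma_n\boldsymbol{A}}$, so are $(\boldsymbol{p}_n)$, $(\boldsymbol{v}_n)$, and $\|\boldsymbol{x}_n-\gamma_n\boldsymbol{v}_n-\boldsymbol{\overline{x}}\|$. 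Re-expanding $\|\boldsymbol{x}_{n+1}-\boldsymbol{\overline{x}}\|^2$ about $\boldsymbol{x}_n-\gamma_n\boldsymbol{v}_n$ and using boundedness to control the cross term by a constant times $\|\boldsymbol{r}_n\|$ yields the quasi-Fej\'er inequality $\|\boldsymbol{x}_{n+1}-\boldsymbol{\overline{x}}\|^2\leq\|\boldsymbol{x}_n-\boldsymbol{\overline{x}}\|^2-\kappa\|\boldsymbol{x}_n-\boldsymbol{p}_n\|^2+\varepsilon_n$ with $\sum_n\varepsilon_n<\pinf$, valid — with a sequence that is summable uniformly over bounded subsets — for every point of $\zer(\boldsymbol{A}+\boldsymbol{B})$. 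Lemma~\ref{l:1} then gives $\sum_n\|\boldsymbol{x}_n-\boldsymbol{p}_n\|^2<\pinf$; and since $\boldsymbol{y}_n-\boldsymbol{q}_n=\boldsymbol{x}_n-\boldsymbol{x}_{n+1}=\gamma_n\boldsymbol{v}_n-\boldsymbol{r}_n$ while the displayed bound on $\|\boldsymbol{v}_n\|$ is square-summable, \ref{t:broome-st2010-10-27i} follows.

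For \ref{t:broome-st2010-10-27ii}, note $\|\boldsymbol{x}_n-\boldsymbol{p}_n\|\to 0$, so $(\boldsymbol{x}_n)$ and $(\boldsymbol{p}_n)$ have the same weak cluster points and $\boldsymbol{p}_n-\boldsymbol{b}_n$ converges weakly to any of them, while the bound on $\|\boldsymbol{v}_n\|$ forces $\boldsymbol{v}_n\to\boldsymbol{0}$. As $\boldsymbol{B}$ is monotone, Lipschitzian and everywhere defined, it is maximally monotone, and since $\dom\boldsymbol{A}\cap\inte\dom\boldsymbol{B}=\dom\boldsymbol{A}\neq\emp$, the sum $\boldsymbol{A}+\boldsymbol{B}$ is maximally monotone; passing to the limit in the monotonicity inequality for $\boldsymbol{A}+\boldsymbol{B}$ applied to $(\boldsymbol{p}_n-\boldsymbol{b}_n,\boldsymbol{v}_n)$ along a weakly convergent subsequence shows that every weak cluster point of $(\boldsymbol{x}_n)$ lies in $\zer(\boldsymbol{A}+\boldsymbol{B})$. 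Lemma~\ref{l:2} now gives $\boldsymbol{x}_n\weakly\boldsymbol{\overline{x}}\in\zer(\boldsymbol{A}+\boldsymbol{B})$, and hence $\boldsymbol{p}_n\weakly\boldsymbol{\overline{x}}$. For \ref{t:broome-st2010-10-27iii}: under \ref{t:broome-st2010-10-27iiia}, apply Definition~\ref{d:demir} to $((\boldsymbol{p}_n-\boldsymbol{b}_n,\boldsymbol{v}_n))_{n\in\NN}\subset\gr(\boldsymbol{A}+\boldsymbol{B})$ and $\boldsymbol{0}\in(\boldsymbol{A}+\boldsymbol{B})\boldsymbol{\overline{x}}$ to obtain $\boldsymbol{p}_n-\boldsymbol{b}_n\to\boldsymbol{\overline{x}}$, hence $\boldsymbol{p}_n\to\boldsymbol{\overline{x}}$ and $\boldsymbol{x}_n\to\boldsymbol{\overline{x}}$; under \ref{t:broome-st2010-10-27iiib}, uniform monotonicity of $\boldsymbol{A}$ or $\boldsymbol{B}$ at $\boldsymbol{\overline{x}}$ makes $\boldsymbol{A}+\boldsymbol{B}$ uniformly monotone, hence (Lemma~\ref{l:2009-09-20}\ref{l:2009-09-20i}) demiregular, at $\boldsymbol{\overline{x}}$, so \ref{t:broome-st2010-10-27iiia} applies; under \ref{t:broome-st2010-10-27iiic}, the quasi-Fej\'er inequality holds, with a uniformly summable sequence, with respect to every point of a closed ball of center $\boldsymbol{w}$ and radius $\rho>0$ contained in $\zer(\boldsymbol{A}+\boldsymbol{B})$, and evaluating it at $\boldsymbol{x}=\boldsymbol{w}+\rho\boldsymbol{z}$ for $\|\boldsymbol{z}\|\leq 1$, rearranging, taking the supremum over $\boldsymbol{z}$, and summing telescopically yields $\sum_n\|\boldsymbol{x}_{n+1}-\boldsymbol{x}_n\|<\pinf$, so $(\boldsymbol{x}_n)$ is strongly Cauchy and converges strongly — necessarily to $\boldsymbol{\overline{x}}$ by \ref{t:broome-st2010-10-27ii} — and so does $(\boldsymbol{p}_n)$.

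The main obstacle is the second step: the three perturbation sequences have to be threaded through the error-free estimate so that the coefficient of $\|\boldsymbol{x}_n-\boldsymbol{p}_n\|^2$ stays uniformly positive — this is exactly where the window $\gamma_n\in[\varepsilon,(1-\varepsilon)/\beta]$ enters, through $1-\gamma_n^2\beta^2\geq\varepsilon(2-\varepsilon)$ — while the residual on the right-hand side remains summable. This forces the two-stage handling of boundedness (first an $\ell^1$-perturbed monotonicity of $(\|\boldsymbol{x}_n-\boldsymbol{\overline{x}}\|)$ to obtain boundedness, then the sharp quasi-Fej\'er inequality with the cross term in $\boldsymbol{r}_n$ brought under control), since the perturbations can only be absorbed once the iterates are known to be bounded.
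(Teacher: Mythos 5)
Your proposal is correct, and its skeleton coincides with the paper's: a quasi-Fej\'er contraction with defect $-\kappa\|\boldsymbol{x}_n-\boldsymbol{p}_n\|^2$, a two-stage argument (crude $\ell^1$-perturbed monotonicity of $\|\boldsymbol{x}_n-\boldsymbol{x}\|$ to get boundedness, then the sharp inequality once the cross term can be controlled), Lemmas~\ref{l:1} and~\ref{l:2}, maximal monotonicity of $\boldsymbol{A}+\boldsymbol{B}$ with weak--strong closedness of its graph for part \ref{t:broome-st2010-10-27ii}, and the demiregularity reduction for \ref{t:broome-st2010-10-27iiia}--\ref{t:broome-st2010-10-27iiib}. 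Where you genuinely diverge is in the bookkeeping of the three error sequences. The paper introduces the shadow error-free iterates $\bary_n=\boldsymbol{x}_n-\gamma_n\boldsymbol{B}\boldsymbol{x}_n$, $\barp_n=J_{\gamma_n\boldsymbol{A}}\bary_n$, $\barq_n$ computed from the \emph{current} $\boldsymbol{x}_n$, proves the exact Tseng contraction $\|\boldsymbol{x}_n-\bary_n+\barq_n-\boldsymbol{x}\|^2\leq\|\boldsymbol{x}_n-\boldsymbol{x}\|^2-\varepsilon^2\|\boldsymbol{x}_n-\barp_n\|^2$ for them, and dumps all perturbations into a single summable deviation $\boldsymbol{e}_n=\boldsymbol{y}_n-\boldsymbol{q}_n-\bary_n+\barq_n$; the element of $(\boldsymbol{A}+\boldsymbol{B})\barp_n$ it tracks is built from the clean iterates, so no Young-type absorption is ever needed. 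You instead keep the noisy iterates throughout, recast the update as $\boldsymbol{x}_{n+1}=\boldsymbol{x}_n-\gamma_n\boldsymbol{v}_n+\boldsymbol{r}_n$ with $\boldsymbol{v}_n\in(\boldsymbol{A}+\boldsymbol{B})(\boldsymbol{p}_n-\boldsymbol{b}_n)$, and must then absorb the $\boldsymbol{a}_n$- and $\boldsymbol{b}_n$-contamination of the square-completion into the negative term via Young's inequality, at the cost of shrinking $\kappa$ below $\varepsilon(2-\varepsilon)$. Both decompositions work; the paper's keeps the contraction constant clean and localizes all error analysis in one norm estimate, while yours makes the point $(\boldsymbol{p}_n-\boldsymbol{b}_n,\boldsymbol{v}_n)\in\gr(\boldsymbol{A}+\boldsymbol{B})$ used for the limiting argument and for demiregularity the \emph{same} object that drives the Fej\'er estimate, which is tidier in parts \ref{t:broome-st2010-10-27ii} and \ref{t:broome-st2010-10-27iii}. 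Finally, for \ref{t:broome-st2010-10-27iiic} you give the classical interior-point telescoping argument explicitly, whereas the paper delegates it to \cite[Proposition~3.10]{Else01}; your version is self-contained and correct, provided (as you note) the summable perturbation is chosen uniformly over the ball, which your two-stage boundedness argument does deliver.
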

\begin{proof}
Let us set 
\begin{equation}
\label{e:broome-st2010-10-29a}
(\forall n\in\NN)\quad 
\begin{cases}
\bary_n=\boldsymbol{x}_n-\gamma_n\boldsymbol{B}\boldsymbol{x}_n\\
\barp_n=J_{\gamma_n\boldsymbol{A}}\,\bary_n\\
\barq_n=\barp_n-\gamma_n\boldsymbol{B}\barp_n
\end{cases}
\quad\text{and}\quad
\begin{cases}
\boldsymbol{u}_n=\gamma_n^{-1}(\boldsymbol{x}_n-\barp_n)
+\boldsymbol{B}\barp_n-\boldsymbol{B}\boldsymbol{x}_n\\
\boldsymbol{e}_n=\boldsymbol{y}_n-\boldsymbol{q}_n-\bary_n+\barq_n.
\end{cases}
\end{equation}
Then 
\begin{equation}
\label{e:rio2010-10-12c}
(\forall n\in\NN)\quad 
\bary_n-\barp_n\in\gamma_n \boldsymbol{A}\barp_n
\quad\text{and}\quad
\boldsymbol{u}_n=\gamma_n^{-1}(\bary_n-\barp_n)
+\boldsymbol{B}\barp_n\in\boldsymbol{A}\barp_n
+\boldsymbol{B}\barp_n.
\end{equation}
Now let $\boldsymbol{x}\in\zer(\boldsymbol{A}+\boldsymbol{B})$ 
and let $n\in\NN$. We first note that
$(\boldsymbol{x},-\gamma_n \boldsymbol{B}\boldsymbol{x})
\in\gr\gamma_n \boldsymbol{A}$.
On the other hand, \eqref{e:rio2010-10-12c} yields
$(\barp_n,\bary_n-\barp_n)\in\gr\gamma_n\boldsymbol{A}$. 
Hence, by monotonicity of $\gamma_n\boldsymbol{A}$, 
$\scal{\barp_n-\boldsymbol{x}}{\barp_n-\bary_n-\gamma_n
\boldsymbol{B}\boldsymbol{x}}\leq 0$.
However, by monotonicity of $\boldsymbol{B}$, 
$\scal{\barp_n-\boldsymbol{x}}{\gamma_n
\boldsymbol{B}\boldsymbol{x}-\gamma_n\boldsymbol{B}
\barp_n}\leq 0$.
Upon adding these two inequalities, we obtain
$\scal{\barp_n-\boldsymbol{x}}{\barp_n-\bary_n-\gamma_n
\boldsymbol{B}\barp_n}\leq 0$.
In turn, we derive from \eqref{e:broome-st2010-10-29a} that
\begin{align}
\label{e:rio2010-10-11x}
2\gamma_n\scal{\barp_n-\boldsymbol{x}}{\boldsymbol{B}\boldsymbol{x}_n
-\boldsymbol{B}\barp_n}
&=2\scal{\barp_n-\boldsymbol{x}}{\barp_n-\bary_n-\gamma_n
\boldsymbol{B}\barp_n}
+2\scal{\barp_n-\boldsymbol{x}}{\gamma_n \boldsymbol{B}
\boldsymbol{x}_n+\bary_n-\barp_n}\nonumber\\
&\leq 2\scal{\barp_n-\boldsymbol{x}}{\gamma_n 
\boldsymbol{B}\boldsymbol{x}_n+\bary_n-\barp_n}\nonumber\\
&=2\scal{\barp_n-\boldsymbol{x}}{\boldsymbol{x}_n-\barp_n}\nonumber\\
&=\|\boldsymbol{x}_n-\boldsymbol{x}\|^2-\|\barp_n-\boldsymbol{x}\|^2
-\|\boldsymbol{x}_n-\barp_n\|^2
\end{align}
and, therefore, using the Lipschitz continuity of $\boldsymbol{B}$, 
that
\begin{align}
\label{e:rio2010-10-11y}
\|\boldsymbol{x}_n-\bary_n+\barq_n-\boldsymbol{x}\|^2
&=\|(\barp_n-\boldsymbol{x})
+\gamma_n(\boldsymbol{B}\boldsymbol{x}_n-
\boldsymbol{B}\barp_n)\|^2\nonumber\\
&=\|\barp_n-\boldsymbol{x}\|^2+
2\gamma_n\scal{\barp_n-\boldsymbol{x}}
{\boldsymbol{B}\boldsymbol{x}_n-\boldsymbol{B}\barp_n}+\gamma_n^2
\|\boldsymbol{B}\boldsymbol{x}_n
-\boldsymbol{B}\barp_n\|^2\nonumber\\
&\leq\|\boldsymbol{x}_n-\boldsymbol{x}\|^2
-\|\boldsymbol{x}_n-\barp_n\|^2+
\gamma_n^2\|\boldsymbol{B}\boldsymbol{x}_n-\boldsymbol{B}
\barp_n\|^2\nonumber\\
&\leq\|\boldsymbol{x}_n-\boldsymbol{x}\|^2-
(1-\gamma_n^2\beta^2)\|\boldsymbol{x}_n-\barp_n\|^2\nonumber\\
&\leq\|\boldsymbol{x}_n-\boldsymbol{x}\|^2-\varepsilon^2\|
\boldsymbol{x}_n-\barp_n\|^2.
\end{align}
We also derive from \eqref{e:rio2010-10-11u} and 
\eqref{e:broome-st2010-10-29a} the following inequalities. First, 
\begin{equation}
\label{e:broome-st2010-10-29c}
\|\bary_n-\boldsymbol{y}_n\|
=\gamma_n\|\boldsymbol{a}_n\|\leq\|\boldsymbol{a}_n\|/\beta.
\end{equation}
Hence, since $J_{\gamma_n\boldsymbol{A}}$ is nonexpansive,
\begin{align}
\label{e:broome-st2010-10-29f}
\|\barp_n-\boldsymbol{p}_n\|
&=\|J_{\gamma_n\boldsymbol{A}}\,\bary_n-
J_{\gamma_n\boldsymbol{A}}\boldsymbol{y}_n
-\boldsymbol{b}_n\|\nonumber\\
&\leq\|J_{\gamma_n \boldsymbol{A}}\,\bary_n-J_{\gamma_n
\boldsymbol{A}}\boldsymbol{y}_n\|+\|\boldsymbol{b}_n\|\nonumber\\
&\leq\|\bary_n-\boldsymbol{y}_n\|+\|\boldsymbol{b}_n\|\nonumber\\
&\leq\|\boldsymbol{a}_n\|/\beta+\|\boldsymbol{b}_n\|.
\end{align}
In turn, we get
\begin{align}
\label{e:broome-st2010-10-29d}
\|\barq_n-\boldsymbol{q}_n\|
&=\|\barp_n-\gamma_n\boldsymbol{B}\barp_n-\boldsymbol{p}_n
+\gamma_n(\boldsymbol{B}\boldsymbol{p}_n
+\boldsymbol{c}_n)\|\nonumber\\
&\leq\|\barp_n-\boldsymbol{p}_n\|+\gamma_n
\|\boldsymbol{B}\barp_n-\boldsymbol{B}\boldsymbol{p}_n\|
+\gamma_n\|\boldsymbol{c}_n\|
\nonumber\\
&\leq(1+\gamma_n\beta)\|\barp_n-\boldsymbol{p}_n\|
+\gamma_n\|\boldsymbol{c}_n\|\nonumber\\
&\leq2(\|\boldsymbol{a}_n\|/\beta+\|\boldsymbol{b}_n\|)
+\|\boldsymbol{c}_n\|/\beta.
\end{align}
Combining \eqref{e:broome-st2010-10-29a}, 
\eqref{e:broome-st2010-10-29c}, and \eqref{e:broome-st2010-10-29d} 
yields
$\|\boldsymbol{e}_n\|\leq\|\bary_n-\boldsymbol{y}_n\|+
\|\barq_n-\boldsymbol{q}_n\|\leq
3\|\boldsymbol{a}_n\|/\beta+2\|\boldsymbol{b}_n\|
+\|\boldsymbol{c}_n\|/\beta$
and, in view of \eqref{e:broome-st2010-10-27g}, it follows that
\begin{equation}
\label{e:broome-st2010-10-29l}
\sum_{k\in\NN}\|\boldsymbol{e}_k\|<\pinf.
\end{equation}
Furthermore, \eqref{e:rio2010-10-11u},
\eqref{e:broome-st2010-10-29a}, and \eqref{e:rio2010-10-11y}
imply that 
\begin{equation}
\label{e:broome-st2010-10-29e}
\|\boldsymbol{x}_{n+1}-\boldsymbol{x}\|
=\|\boldsymbol{x}_n-\boldsymbol{y}_n+
\boldsymbol{q}_n-\boldsymbol{x}\|
\leq\|\boldsymbol{x}_n-\bary_n+\barq_n-\boldsymbol{x}\|
+\|\boldsymbol{e}_n\|
\leq\|\boldsymbol{x}_n-\boldsymbol{x}\|+\|\boldsymbol{e}_n\|.
\end{equation}
Thus, it follows from \eqref{e:broome-st2010-10-29l} and 
Lemma~\ref{l:1} that $(\boldsymbol{x}_k)_{k\in\NN}$ is bounded, 
and we deduce from \eqref{e:broome-st2010-10-29a} that, since the 
operators $\boldsymbol{B}$ and
$(J_{\gamma_k\boldsymbol{A}})_{k\in\NN}$ are Lipschitzian, 
$(\bary_k)_{k\in\NN}$, $(\barp_k)_{k\in\NN}$,
and $(\barq_k)_{k\in\NN}$ are bounded.
Consequently, $\mu=\sup_{k\in\NN}\|\boldsymbol{x}_k-\bary_k+\barq_k
-\boldsymbol{x}\|<\pinf$ and, using \eqref{e:rio2010-10-11u}, 
\eqref{e:broome-st2010-10-29a}, 
and \eqref{e:rio2010-10-11y}, we obtain
\begin{align}
\label{e:broome-st2010-10-29m}
\|\boldsymbol{x}_{n+1}-\boldsymbol{x}\|^2
&=\|\boldsymbol{x}_n-\boldsymbol{y}_n+\boldsymbol{q}_n
-\boldsymbol{x}\|^2\nonumber\\
&=\|\boldsymbol{x}_n-\bary_n+\barq_n-\boldsymbol{x}
+\boldsymbol{e}_n\|^2\nonumber\\
&=\|\boldsymbol{x}_n-\bary_n+\barq_n-\boldsymbol{x}\|^2
+2\scal{\boldsymbol{x}_n-\bary_n+\barq_n-\boldsymbol{x}}
{\boldsymbol{e}_n}
+\|\boldsymbol{e}_n\|^2\nonumber\\
&\leq\|\boldsymbol{x}_n-\boldsymbol{x}\|^2
-\varepsilon^2\|\boldsymbol{x}_n-\barp_n\|^2
+\varepsilon_n,
\quad\text{where}\quad
\varepsilon_n=2\mu\|\boldsymbol{e}_n\|+\|\boldsymbol{e}_n\|^2.
\end{align}

\ref{t:broome-st2010-10-27i}:
It follows from \eqref{e:broome-st2010-10-29l}, 
\eqref{e:broome-st2010-10-29m}, and Lemma~\ref{l:1} that
\begin{equation}
\label{e:nyc2010-10-30a}
\sum_{n\in\NN}\|\boldsymbol{x}_n-\barp_n\|^2<\pinf.
\end{equation}
Hence, since \eqref{e:broome-st2010-10-27g} 
and \eqref{e:broome-st2010-10-29f} imply that
$\sum_{n\in\NN}\|\barp_n-\boldsymbol{p}_n\|<\pinf$, we have
$\sum_{n\in\NN}\|\barp_n-\boldsymbol{p}_n\|^2<\pinf$.
We therefore infer that 
$\sum_{n\in\NN}\|\boldsymbol{x}_n-\boldsymbol{p}_n\|^2<\pinf$.
Furthermore, since \eqref{e:broome-st2010-10-29a} yields
\begin{align}
\label{e:2010-11-09a}
(\forall n\in\NN)\quad\|\boldsymbol{y}_n-\boldsymbol{q}_n\|^2
&=\|\bary_n-\barq_n+\boldsymbol{e}_n\|^2\nonumber\\
&=\|\boldsymbol{x}_n-\barp_n-\gamma_n(\boldsymbol{B}\boldsymbol{x}_n
-\boldsymbol{B}\barp_n)+\boldsymbol{e}_n\|^2\nonumber\\
&\leq 3\big(\|\boldsymbol{x}_n-\barp_n\|^2
+\gamma_n^2\beta^2\,
\|\boldsymbol{x}_n-\barp_n\|^2+\|\boldsymbol{e}_n\|^2\big)\nonumber\\
&\leq 6\|\boldsymbol{x}_n-\barp_n\|^2+3\|\boldsymbol{e}_n\|^2,
\end{align}
we derive from \eqref{e:broome-st2010-10-29l} that
$\sum_{n\in\NN}\|\boldsymbol{y}_n-\boldsymbol{q}_n\|^2<\pinf$.

\ref{t:broome-st2010-10-27ii}:
It follows from \eqref{e:nyc2010-10-30a}, the Lipschitz 
continuity of $\boldsymbol{B}$, and \eqref{e:broome-st2010-10-29a}
that 
\begin{equation}
\label{e:rio2010-10-12e}
\boldsymbol{B}\barp_n-\boldsymbol{B}\boldsymbol{x}_n\to 0
\quad\text{and}\quad\boldsymbol{u}_n\to 0. 
\end{equation}
Now, let $\boldsymbol{w}$ be a weak sequential cluster point 
of $(\boldsymbol{x}_n)_{n\in\NN}$, say 
$\boldsymbol{x}_{k_n}\weakly\boldsymbol{w}$. 
It follows from \eqref{e:rio2010-10-12c} that
$(\barp_{k_n},\boldsymbol{u}_{k_n})_{n\in\NN}$ 
lies in $\gr(\boldsymbol{A}+\boldsymbol{B})$, and from  
\eqref{e:nyc2010-10-30a} and \eqref{e:rio2010-10-12e} that 
\begin{equation}
\label{e:rio2010-10-12a}
\barp_{k_n}\weakly\boldsymbol{w}\quad\text{and}\quad 
\boldsymbol{u}_{k_n}\to\boldsymbol{0}.
\end{equation}
Since $\boldsymbol{B}\colon\HHH\to\HHH$ is monotone and continuous, 
it is maximally monotone \cite[Example~20.29]{Livre1}. 
Furthermore, since $\dom \boldsymbol{B}=\HHH$, 
$\boldsymbol{A}+\boldsymbol{B}$ is maximally monotone 
\cite[Corollary~24.4(i)]{Livre1} and its graph is therefore 
sequentially closed in 
$\HHH^{\text{weak}}\times\HHH^{\text{strong}}$
\cite[Proposition~20.33(ii)]{Livre1}. Therefore, 
$(\boldsymbol{w},\boldsymbol{0})\in\gr(\boldsymbol{A}
+\boldsymbol{B})$. 
Using \eqref{e:broome-st2010-10-29m},
\eqref{e:broome-st2010-10-29l},
and Lemma~\ref{l:2}, we conclude that there exists 
$\boldsymbol{\overline{x}}\in\zer(\boldsymbol{A}+\boldsymbol{B})$ 
such that $\boldsymbol{x}_n\weakly\boldsymbol{\overline{x}}$.
Finally, in view of \ref{t:broome-st2010-10-27i}, 
$\boldsymbol{p}_n\weakly\boldsymbol{\overline{x}}$.

\ref{t:broome-st2010-10-27iiia}:
As shown in \ref{t:broome-st2010-10-27ii}, 
$\boldsymbol{p}_n\weakly\boldsymbol{\overline{x}}$. 
In turn, it follows from \eqref{e:broome-st2010-10-27g} that
$\barp_n=\boldsymbol{p}_n+\boldsymbol{b}_n\weakly
\boldsymbol{\overline{x}}$.
Moreover, \eqref{e:rio2010-10-12e} yields 
$\boldsymbol{u}_n\to\boldsymbol{0}$ and \eqref{e:rio2010-10-12c} 
yields $(\forall n\in\NN)$ 
$(\barp_n,\boldsymbol{u}_n)\in\gr(\boldsymbol{A}+\boldsymbol{B})$.
Altogether, Definition~\ref{d:demir} implies that
$\barp_n\to\boldsymbol{\overline{x}}$ and, therefore, that
$\boldsymbol{p}_n=\barp_n-\boldsymbol{b}_n\to\boldsymbol
{\overline{x}}$.
Finally, it results from \ref{t:broome-st2010-10-27i} that
$\boldsymbol{x}_n\to\boldsymbol{\overline{x}}$.

\ref{t:broome-st2010-10-27iiib}$\Rightarrow$%
\ref{t:broome-st2010-10-27iiia}: The assumptions imply that 
$\boldsymbol{A}+\boldsymbol{B}$ is uniformly monotone at 
$\boldsymbol{\overline{x}}$. Hence, the result follows 
from Lemma~\ref{l:2009-09-20}\ref{l:2009-09-20i}.

\ref{t:broome-st2010-10-27iiic}:
It follows from \eqref{e:broome-st2010-10-29m}, 
\eqref{e:broome-st2010-10-29l}, \ref{t:broome-st2010-10-27ii}, and 
\cite[Proposition~3.10]{Else01} that 
$\boldsymbol{x}_n\to\boldsymbol{\overline{x}}$. In turn, 
\ref{t:broome-st2010-10-27i} yields
$\boldsymbol{p}_n\to\boldsymbol{\overline{x}}$. 
\end{proof}

\begin{remark}
\label{r:nyc2010-10-30}
The sequence $(\boldsymbol{a}_n)_{n\in\NN}$,
$(\boldsymbol{b}_n)_{n\in\NN}$, and 
$(\boldsymbol{c}_n)_{n\in\NN}$ in \eqref{e:rio2010-10-11u}
model errors in the implementation of the operators.
In the error-free setting, the weak convergence of
$(\boldsymbol{x}_n)_{n\in\NN}$ to a zero of
$\boldsymbol{A}+\boldsymbol{B}$ in 
Theorem~\ref{t:broome-st2010-10-27}\ref{t:broome-st2010-10-27ii}
follows from \cite[Theorem~3.4(b)]{Tsen00}.
\end{remark}

\subsection{The monotone{\small$+$}skew model}

Let us start with some elementary facts about the operators
$\boldsymbol{M}$ and $\boldsymbol{S}$ appearing in 
Problem~\ref{prob:2}.

\begin{proposition}
\label{p:1}
Consider the setting of Problem~\ref{prob:1} and 
Problem~\ref{prob:2}. Then the following hold.
\begin{enumerate}
\item 
\label{p:1i----} 
$\boldsymbol{M}$ is maximally monotone.
\item 
\label{p:1i---} 
$\boldsymbol{S}\in\BL(\KKK)$, $\boldsymbol{S}^*=-\boldsymbol{S}$,
and $\|\boldsymbol{S}\|=\|L\|$.
\item 
\label{p:1i--} 
$\boldsymbol{M}+\boldsymbol{S}$ is maximally monotone.
\item 
\label{p:1i+} 
$(\forall\gamma\in\RPP)(\forall x\in\HH)(\forall v\in\GG)$ 
$J_{\gamma\boldsymbol{M}}(x,v)=\big(J_{\gamma A}(x+\gamma z)
\,,\,J_{\gamma B^{-1}}(v-\gamma r)\big)$.
\item 
\label{p:1i++} 
$(\forall\gamma\in\RPP)(\forall x\in\HH)(\forall v\in\GG)$ 
\[
J_{\gamma\boldsymbol{S}}(x,v)=\big((\Id+\,\gamma^2L^*L)^{-1}
(x-\gamma L^*v)\,,\,(\Id+\,\gamma^2LL^*)^{-1}(v+\gamma Lx)\big).
\]
\end{enumerate}
\end{proposition}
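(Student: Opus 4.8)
The plan is to split the five assertions into two groups: \ref{p:1i----}--\ref{p:1i--}, which are structural facts about maximal monotonicity, and \ref{p:1i+}--\ref{p:1i++}, which are obtained by solving the inclusions that define the respective resolvents.

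For \ref{p:1i----} I would write $\boldsymbol{M}=M_1\times M_2$ with $M_1\colon x\mapsto-z+Ax$ on $\HH$ and $M_2\colon v\mapsto r+B^{-1}v$ on $\GG$. Translating a maximally monotone operator by a constant preserves maximal monotonicity, as does inversion; hence $M_1$ and $M_2$ are maximally monotone, and since the Cartesian product of maximally monotone operators is maximally monotone on the Hilbert direct sum, so is $\boldsymbol{M}$. For \ref{p:1i---}, linearity and boundedness of $\boldsymbol{S}$ are inherited from those of $L$ and $L^*$; writing out $\scal{\boldsymbol{S}(x,v)}{(y,w)}=\scal{L^*v}{y}-\scal{Lx}{w}=\scal{v}{Ly}-\scal{x}{L^*w}=\scal{(x,v)}{-\boldsymbol{S}(y,w)}$ identifies $\boldsymbol{S}^*=-\boldsymbol{S}$, and $\norm{\boldsymbol{S}(x,v)}^2=\norm{Lx}^2+\norm{L^*v}^2$ together with $\norm{L^*}=\norm{L}$ gives $\norm{\boldsymbol{S}}=\norm{L}$, the bound being attained along a norm-maximizing sequence for $L$ with $v=0$. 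For \ref{p:1i--}, $\boldsymbol{S}^*=-\boldsymbol{S}$ forces $\scal{\boldsymbol{S}\boldsymbol{x}}{\boldsymbol{x}}=0$ for every $\boldsymbol{x}\in\KKK$, so $\boldsymbol{S}$ is monotone; being continuous with $\dom\boldsymbol{S}=\KKK$, it is maximally monotone \cite[Example~20.29]{Livre1}, and since $\dom\boldsymbol{S}=\KKK$, \cite[Corollary~24.4(i)]{Livre1} applied to $\boldsymbol{M}$ (maximally monotone by \ref{p:1i----}) and $\boldsymbol{S}$ yields that $\boldsymbol{M}+\boldsymbol{S}$ is maximally monotone.

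For \ref{p:1i+} I would use that the resolvent of a Cartesian product is the product of the resolvents, so it suffices to treat each coordinate: $p=(\Id+\gamma M_1)^{-1}x$ means $x\in p-\gamma z+\gamma Ap$, i.e. $x+\gamma z\in p+\gamma Ap$, i.e. $p=J_{\gamma A}(x+\gamma z)$; the analogous computation for $M_2$ gives the second coordinate $J_{\gamma B^{-1}}(v-\gamma r)$. For \ref{p:1i++}, set $(p,q)=J_{\gamma\boldsymbol{S}}(x,v)$, i.e. $(x,v)=(p,q)+\gamma(L^*q,-Lp)$, which is the linear system $x=p+\gamma L^*q$, $v=q-\gamma Lp$. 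Substituting $q=v+\gamma Lp$ into the first equation yields $(\Id+\gamma^2L^*L)p=x-\gamma L^*v$, and substituting $p=x-\gamma L^*q$ into the second yields $(\Id+\gamma^2LL^*)q=v+\gamma Lx$, which gives the stated closed forms once the two operators are inverted.

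The only point requiring a little care is the invertibility used in \ref{p:1i++}: $\Id+\gamma^2L^*L$ and $\Id+\gamma^2LL^*$ are bounded, self-adjoint, and satisfy $\scal{(\Id+\gamma^2L^*L)x}{x}\geq\norm{x}^2$ (likewise for the other), hence they are bijective with nonexpansive inverses; this is what legitimizes the elimination above and makes the resulting operators well defined. Everything else is routine verification.
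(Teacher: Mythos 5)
Your proof is correct and follows essentially the same route as the paper's: the product/translation structure for the maximal monotonicity of $\boldsymbol{M}$, the direct norm computation for $\boldsymbol{S}$, maximal monotonicity of the sum via the full domain of the continuous monotone operator $\boldsymbol{S}$, and the same coordinatewise resolvent computations. The only additions are that you verify directly what the paper delegates to citations (the translated-resolvent formula and skew-adjointness) and that you make explicit the invertibility of $\Id+\gamma^2L^*L$ and $\Id+\gamma^2LL^*$, which the paper leaves implicit; both are welcome but do not change the argument.
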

\begin{proof} 
\ref{p:1i----}: 
Since $A$ and $B$ are maximally monotone, it follows from
\cite[Propositions~20.22 and~20.23]{Livre1} that $A\times B^{-1}$ 
is likewise. In turn, $\boldsymbol{M}$ is maximally monotone.

\ref{p:1i---}:
The first two assertions are clear. 
Now let $(x,v)\in\KKK$. Then
$\|\boldsymbol{S}(x,v)\|^2=\|(L^*v,-Lx)\|^2
=\|L^*v\|^2+\|Lx\|^2\leq\|L\|^2(\|v\|^2+\|x\|^2)
=\|L\|^2\|(x,v)\|^2$. Thus, $\|\boldsymbol{S}\|\leq\|L\|$.
Conversely, $\|x\|\leq 1$ $\Rightarrow$ $\|(x,0)\|\leq 1$
$\Rightarrow$ $\|Lx\|=\|\boldsymbol{S}(x,0)\|\leq\|\boldsymbol{S}\|$.
Hence $\|L\|\leq\|\boldsymbol{S}\|$.

\ref{p:1i--}:
By \ref{p:1i----}, $\boldsymbol{M}$ is maximally monotone. On the 
other hand, it follows from \ref{p:1i---} that $\boldsymbol{S}$ 
is monotone and continuous, hence maximally monotone 
\cite[Example~20.29]{Livre1}. Altogether, since 
$\dom\boldsymbol{S}=\KKK$, it follows from 
\cite[Corollary 24.4]{Livre1} that 
$\boldsymbol{M}+\boldsymbol{S}$ is maximally monotone.

\ref{p:1i+}: This follows from \cite[Proposition~23.16]{Livre1}.

\ref{p:1i++}: 
Let $(x,v)\in\KKK$ and set $(p,q)=J_{\gamma\boldsymbol{S}}(x,v)$.
Then $(x,v)=(p,q)+\gamma\boldsymbol{S}(p,q)$ and hence
$x=p+\gamma L^*q$ and $v=q-\gamma Lp$. Hence, 
$Lx=Lp+\gamma LL^*q$ and $L^*v=L^*q-\gamma L^*Lp$.
Thus, $x=p+\gamma L^*v+\gamma^2 L^*Lp$ and therefore
$p=(\Id+\,\gamma^2L^*L)^{-1}(x-\gamma L^*v)$. Likewise, 
$v=q-\gamma Lx+\gamma^2 LL^*q$, and therefore 
$q=(\Id+\,\gamma^2LL^*)^{-1}(v+\gamma Lx)$.
\end{proof} 

The next proposition makes the tight interplay between 
Problem~\ref{prob:1} and Problem~\ref{prob:2} explicit. 
An alternate proof of the equivalence
\ref{p:2ii}$\Leftrightarrow$\ref{p:2iii}$\Leftrightarrow$\ref{p:2iv}
can be found in \cite{Penn00} (see also 
\cite{Atto96,Ecks99,Merc80,Robi99} for partial results); 
we provide a direct argument for completeness.

\begin{proposition}
\label{p:2}
Consider the setting of Problem~\ref{prob:1} and 
Problem~\ref{prob:2}. Then 
\begin{enumerate}
\item 
\label{p:2i-} 
$\zer(\boldsymbol{M}+\boldsymbol{S})$ is a closed convex
subset of ${\mathcal P}\times{\mathcal D}$.
\end{enumerate}
Furthermore, the following are equivalent. 
\begin{enumerate}
\setcounter{enumi}{1}
\item 
\label{p:2i} 
$z\in\ran(A+L^*\circ B\circ (L\cdot-r))$.
\item 
\label{p:2ii}
${\mathcal P}\neq\emp$.
\item 
\label{p:2iii} 
$\zer(\boldsymbol{M}+\boldsymbol{S})\neq\emp$.
\item 
\label{p:2iv}
${\mathcal D}\neq\emp$.
\item 
\label{p:2v}
$-r\in\ran (-L\circ A^{-1}\circ (z-L^*\cdot)+B^{-1})$.
\end{enumerate}
\end{proposition}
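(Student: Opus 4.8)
The plan is to reduce the whole proposition to one transcription identity. Writing out \eqref{e:nyc2010-10-31z} on $\KKK=\HH\oplus\GG$, one has $\boldsymbol{M}(x,v)+\boldsymbol{S}(x,v)=(-z+Ax+L^*v)\times(r+B^{-1}v-Lx)$, so that for every $(x,v)\in\KKK$,
\[
\boldsymbol{0}\in\boldsymbol{M}(x,v)+\boldsymbol{S}(x,v)
\;\;\Longleftrightarrow\;\;
\big[\,z-L^*v\in Ax\ \text{ and }\ Lx-r\in B^{-1}v\,\big].
\]
Call the bracketed condition $(\star)$. First I would record this equivalence; everything else will follow by substitution into $(\star)$.

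For \ref{p:2i-}: Proposition~\ref{p:1}\ref{p:1i--} asserts that $\boldsymbol{M}+\boldsymbol{S}$ is maximally monotone, hence $\zer(\boldsymbol{M}+\boldsymbol{S})=\Fix J_{\boldsymbol{M}+\boldsymbol{S}}$ is the fixed point set of a nonexpansive operator and therefore closed and convex (a standard fact; see \cite{Livre1}). To obtain the inclusion $\zer(\boldsymbol{M}+\boldsymbol{S})\subset{\mathcal P}\times{\mathcal D}$, take $(x,v)$ in the zero set, so that $(\star)$ holds. The second relation in $(\star)$ reads $v\in B(Lx-r)$; inserting it into the first gives $z\in Ax+L^*B(Lx-r)$, i.e. $x\in{\mathcal P}$. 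For the dual part, the first relation gives $x\in A^{-1}(z-L^*v)$, whence $Lx\in LA^{-1}(z-L^*v)$, while the second gives $-r\in -Lx+B^{-1}v$; adding these two memberships yields $-r\in -LA^{-1}(z-L^*v)+B^{-1}v$, i.e. $v\in{\mathcal D}$. The only thing to be careful about is that all of these are element/set memberships rather than set identities, so that the additions of inclusions are legitimate.

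For the equivalences: \ref{p:2i}$\Leftrightarrow$\ref{p:2ii} and \ref{p:2iv}$\Leftrightarrow$\ref{p:2v} hold essentially by definition — e.g. $z\in\ran(A+L^*\circ B\circ(L\cdot-r))$ means exactly that some $x$ solves \eqref{e:primal}, and similarly for the dual range and \eqref{e:dual}. Next, \ref{p:2ii}$\Rightarrow$\ref{p:2iii}: if $x\in{\mathcal P}$ then $z\in Ax+L^*B(Lx-r)$, so there is $v\in B(Lx-r)$ with $z-L^*v\in Ax$; then $(\star)$ holds and $(x,v)\in\zer(\boldsymbol{M}+\boldsymbol{S})$. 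Symmetrically, \ref{p:2v}$\Rightarrow$\ref{p:2iii}: if $v\in{\mathcal D}$, unfolding \eqref{e:dual} produces an $x$ with $z-L^*v\in Ax$ and $Lx-r\in B^{-1}v$, i.e. $(\star)$, hence $(x,v)\in\zer(\boldsymbol{M}+\boldsymbol{S})$. Finally \ref{p:2iii}$\Rightarrow$\ref{p:2ii} and \ref{p:2iii}$\Rightarrow$\ref{p:2iv} are immediate from part \ref{p:2i-}, since $\zer(\boldsymbol{M}+\boldsymbol{S})\subset{\mathcal P}\times{\mathcal D}$ forces both factors to be nonempty as soon as the zero set is. Chaining these implications gives \ref{p:2i}$\Leftrightarrow$\ref{p:2ii}$\Leftrightarrow$\ref{p:2iii}$\Leftrightarrow$\ref{p:2iv}$\Leftrightarrow$\ref{p:2v}.

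I do not expect a genuine obstacle here: the argument is entirely bookkeeping with the set-valued operators $A$, $A^{-1}$, $B$, $B^{-1}$ and the bounded linear map $L$ on the product space. The two steps worth stating carefully are (a) the transcription $(\star)$ of $\boldsymbol{0}\in\boldsymbol{M}(x,v)+\boldsymbol{S}(x,v)$, which is where Problem~\ref{prob:2} is tied to Problem~\ref{prob:1}, and (b) the appeal, for \ref{p:2i-}, to the closed-convexity of the zero set of a maximally monotone operator; everything else is direct substitution.
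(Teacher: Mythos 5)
Your proposal is correct and follows essentially the same route as the paper: the same transcription of $\boldsymbol{0}\in\boldsymbol{M}(x,v)+\boldsymbol{S}(x,v)$ into the pair of memberships $z-L^*v\in Ax$ and $Lx-r\in B^{-1}v$, the same substitution arguments for the inclusion into ${\mathcal P}\times{\mathcal D}$ and for the equivalences, and the same appeal to maximal monotonicity of $\boldsymbol{M}+\boldsymbol{S}$ for closed convexity (the paper cites \cite[Proposition~23.39]{Livre1} where you re-derive the fact via $\zer(\boldsymbol{M}+\boldsymbol{S})=\Fix J_{\boldsymbol{M}+\boldsymbol{S}}$, which is the standard proof of that citation). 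No gaps.
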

\begin{proof} 
The equivalences \ref{p:2i}$\Leftrightarrow$\ref{p:2ii} and
\ref{p:2iv}$\Leftrightarrow$\ref{p:2v} are clear.
Now let $(x,v)\in\KKK$.

\ref{p:2i-}:
We derive from from \eqref{e:nyc2010-10-31z} that
$(x,v)\in\zer(\boldsymbol{M}+\boldsymbol{S})$
$\Leftrightarrow$ $0\in -z+Ax+L^*v$ and 
$0\in r+B^{-1}v-Lx$ $\Leftrightarrow$
($z-L^*v\in Ax$ and $Lx-r\in B^{-1}v$)
$\Leftrightarrow$ ($z-L^*v\in Ax$ and $v\in B(Lx-r)$)
$\Rightarrow$ ($z-L^*v\in Ax$ and $L^*v\in L^*(B(Lx-r))$)
$\Rightarrow$ $z\in Ax+L^*(B(Lx-r))$ $\Leftrightarrow$
$x\in{\mathcal P}$.
Similarly,
($z-L^*v\in Ax$ and $Lx-r\in B^{-1}v$)
$\Leftrightarrow$ ($x\in A^{-1}(z-L^*v)$ and $r-Lx\in -B^{-1}v$)
$\Rightarrow$ ($Lx\in L(A^{-1}(z-L^*v))$ and $r-Lx\in -B^{-1}v$)
$\Rightarrow$ $r\in L(A^{-1}(z-L^*v))-B^{-1}v$ $\Leftrightarrow$
$v\in{\mathcal D}$. Finally, since $\boldsymbol{M}+\boldsymbol{S}$ 
is maximally monotone by Proposition~\ref{p:1}\ref{p:1i--}, 
$\zer(\boldsymbol{M}+\boldsymbol{S})$ is closed and convex
\cite[Proposition~23.39]{Livre1}.

\ref{p:2ii}$\Rightarrow$\ref{p:2iii}: 
In view of \eqref{e:nyc2010-10-31z}, 
$x\in{\mathcal P}\Leftrightarrow
z\in Ax+L^*(B(Lx-r))\Leftrightarrow (\exi w\in\GG)\; 
\big(z-L^*w\in Ax\;\text{and}\;w\in B(Lx-r)\big)
\Leftrightarrow
\big((\exi w\in\GG)\;z\in Ax+L^*w\;\text{and}\;-r\in B^{-1}w-Lx\big)
\Leftrightarrow (\exi w\in\GG)\;(x,w)
\in\zer(\boldsymbol{M}+\boldsymbol{S})$.

\ref{p:2iii}$\Rightarrow$\ref{p:2ii} and 
\ref{p:2iii}$\Rightarrow$\ref{p:2iv}: 
These follow from \ref{p:2i-}.

\ref{p:2iv}$\Rightarrow$\ref{p:2iii}: 
$v\in{\mathcal D}
\Leftrightarrow
r\in LA^{-1}(z-L^*v)-B^{-1}v
\Leftrightarrow(\exi y\in\HH)\; 
(y\in A^{-1}(z-L^*v)\;\text{and}\;
r\in Ly$ $-B^{-1}v)
\Leftrightarrow(\exi y\in\HH)\; 
(0\in -z+Ay+L^*v\;\text{and}\;
0\in r+B^{-1}v-Ly)
\Leftrightarrow(\exi y\in\HH)\;(y,v)
\in\zer(\boldsymbol{M}+\boldsymbol{S})$.
\end{proof}

\begin{remark}
\label{r:7}
Suppose that $z\in\ran(A+L^*B(L\cdot-r))$. Then
Proposition~\ref{p:2} assert that solutions to
\eqref{e:primal} and \eqref{e:dual} can be found
as zeros of $\boldsymbol{M}+\boldsymbol{S}$. 
In principle, this can be achieved via the 
Douglas-Rachford algorithm applied to \eqref{e:trial}: 
let $(\boldsymbol{a}_n)_{n\in\NN}$ and 
$(\boldsymbol{b}_n)_{n\in\NN}$ be sequences in $\KKK$, let 
$(\lambda_n)_{n\in\NN}$ be a sequence in $\left]0,2\right[$ 
such that $\boldsymbol{b}_n\weakly\boldsymbol{0}$,
$\sum_{n\in\NN}\lambda_n(\|\boldsymbol{a}_n\|+ 
\|\boldsymbol{b}_n\|)<\pinf$, and 
$\sum_{n\in\NN}\lambda_n(2-\lambda_n)=\pinf$,
let $\boldsymbol{y}_0\in\KKK$, let 
$\gamma\in\RPP$, and set
\begin{equation}
\label{e:7ans}
(\forall n\in\NN)\quad
\begin{array}{l}
\left\lfloor
\begin{array}{l}
\boldsymbol{x}_n=J_{\gamma \boldsymbol{S}}\,\boldsymbol{y}_n
+\boldsymbol{b}_n\\
\boldsymbol{y}_{n+1}=\boldsymbol{y}_n+\lambda_n
\big(J_{\gamma \boldsymbol{M}}(2\boldsymbol{x}_n-\boldsymbol{y}_n)
+\boldsymbol{a}_n-\boldsymbol{x}_n\big).
\end{array}
\right.
\end{array}
\end{equation}
Then it follows from 
Proposition~\ref{p:1}\ref{p:1i----}--\ref{p:1i--} and 
\cite[Theorem~2.1(i)(c)]{Joca09} that 
$(\boldsymbol{x}_n)_{n\in\NN}$ converges weakly to a point in
$\zer(\boldsymbol{M}+\boldsymbol{S})$. 
Now set $(\forall n\in\NN)$
$\boldsymbol{x}_n=(x_n,v_n)$,
$\boldsymbol{y}_n=(y_{1,n},y_{2,n})$,
$\boldsymbol{a}_n=(a_{1,n},a_{2,n})$,
and $\boldsymbol{b}_n=(b_{1,n},b_{2,n})$.
Then, using Proposition~\ref{p:1}\ref{p:1i+}\&\ref{p:1i++},
\eqref{e:7ans} becomes
\begin{equation}
\label{e:main3}
(\forall n\in \NN)\quad
\begin{array}{l}
\left\lfloor
\begin{array}{l}
x_n=(\Id+\,\gamma^2L^*L)^{-1}(y_{1,n}-\gamma L^*y_{2,n})+b_{1,n}\\
v_n=(\Id+\,\gamma^2LL^*)^{-1}(y_{2,n}+\gamma Ly_{1,n})+b_{2,n}\\
y_{1,n+1}=y_{1,n}+\lambda_n\big(J_{\gamma A}(2x_n-y_{1,n}
+\gamma z)+a_{1,n}-x_n\big)\\
y_{2,n+1}=y_{2,n}+\lambda_n
\big(J_{\gamma B^{-1}}(2v_n-y_{2,n}-\gamma r)+a_{2,n}-v_n\big).
\end{array}
\right.\\[2mm]
\end{array}
\end{equation}
Moreover, $(x_n)_{n\in\NN}$ converges weakly to a solution 
$\overline{x}$ to \eqref{e:primal} and $(v_n)_{n\in\NN}$ to a 
solution $\overline{v}$ to \eqref{e:dual} such that 
$z-L^*\overline{v}\in A\overline{x}$ and 
$\overline{v}\in B(L\overline{x}-r)$.
However, a practical limitation of \eqref{e:main3} is that it 
necessitates the inversion of two operators at each iteration,
which may be quite demanding numerically.
\end{remark}

\begin{remark}
\label{r:remhalf}
It follows from \eqref{e:broome-st2010-10-29e} that the error-free 
version of the forward-backward-forward algorithm 
\eqref{e:rio2010-10-11u} is Fej\'er-monotone with respect to
$\zer(\boldsymbol{A}+\boldsymbol{B})$, i.e., for every $n\in\NN$ and
every $\boldsymbol{x}\in\zer(\boldsymbol{A}+\boldsymbol{B})$,
$\|\boldsymbol{x}_{n+1}-\boldsymbol{x}\|\leq
\|\boldsymbol{x}_n-\boldsymbol{x}\|$. Now let $n\in\NN$. 
Then it follows from \cite[Section~2]{Moor01} that there exist 
$\lambda_n\in [0,2]$ and a closed affine halfspace 
$\boldsymbol{H}_n\subset\HHH$ containing 
$\zer(\boldsymbol{A}+\boldsymbol{B})$ such that 
\begin{equation}
\label{e:2010-11-20a}
\boldsymbol{x}_{n+1}=\boldsymbol{x}_n+\lambda_n(P_{\boldsymbol{H}_n}
\boldsymbol{x}_n-\boldsymbol{x}_n).
\end{equation}
In the setting of Problem~\ref{prob:2}, $\boldsymbol{H}_n$ and 
$\lambda_n$ can be determined easily. To see this, consider 
Theorem~\ref{t:broome-st2010-10-27} with $\HHH=\KKK$,
$\boldsymbol{A}=\boldsymbol{M}$, and $\boldsymbol{B}=\boldsymbol{S}$.
Let $\boldsymbol{\overline{x}}\in\zer(\boldsymbol{M}+\boldsymbol{S})$
and suppose that $\boldsymbol{q}_n\neq\boldsymbol{y}_n$ (otherwise, 
we trivially have $\boldsymbol{H}_n=\KKK$).
In view of \eqref{e:rio2010-10-11u},
$\boldsymbol{y}_n-\boldsymbol{p}_n\in\gamma_n\boldsymbol{M}
\boldsymbol{p}_n$ and 
$-\gamma_n\boldsymbol{S}\boldsymbol{\overline{x}}\in\gamma_n
\boldsymbol{M}\boldsymbol{\overline{x}}$.
Hence, using the monotonicity of $\gamma_n\boldsymbol{M}$ and
Proposition~\ref{p:1}\ref{p:1i---}, we get
$0\leq\scal{\boldsymbol{p}_n-\boldsymbol{\overline{x}}}
{\boldsymbol{y}_n-\boldsymbol{p}_n+\gamma_n\boldsymbol{S}
\boldsymbol{\overline{x}}}
=\scal{\boldsymbol{p}_n}{\boldsymbol{y}_n-\boldsymbol{p}_n}-
\scal{\boldsymbol{x}}{\boldsymbol{y}_n-\boldsymbol{p}_n}+
\gamma_n\scal{\boldsymbol{S}^*\boldsymbol{p}_n}
{\boldsymbol{\overline{x}}}
=\scal{\boldsymbol{p}_n}{\boldsymbol{y}_n-\boldsymbol{p}_n}-
\scal{\boldsymbol{\overline{x}}}{\boldsymbol{y}_n-\boldsymbol{p}_n+
\gamma_n\boldsymbol{S}\boldsymbol{p}_n}$.
Therefore, we deduce from \eqref{e:rio2010-10-11u} that   
$\scal{\boldsymbol{\overline{x}}}{\boldsymbol{y}_n-\boldsymbol{q}_n}
\leq\scal{\boldsymbol{p}_n}{\boldsymbol{y}_n-\boldsymbol{p}_n}
=\scal{\boldsymbol{p}_n}{\boldsymbol{y}_n-\boldsymbol{q}_n}$.
Now set
\begin{equation}
\label{e:2010-11-20b}
\boldsymbol{H}_n=\menge{\boldsymbol{x}\in\KKK}
{\scal{\boldsymbol{x}}{\boldsymbol{y}_n-\boldsymbol{q}_n}\leq 
\scal{\boldsymbol{p}_n}{\boldsymbol{y}_n-\boldsymbol{q}_n}} 
\quad\text{and}\quad
\lambda_n=1+\gamma_n^2\frac{\|\boldsymbol{S}(\boldsymbol{p}_n-
\boldsymbol{x}_n)\|^2}{\|\boldsymbol{p}_n-\boldsymbol{x}_n\|^2}.
\end{equation}
Then $\zer(\boldsymbol{M}+\boldsymbol{S})\subset\boldsymbol{H}_n$
and $\lambda_n\leq 1+\gamma_n^2\|\boldsymbol{S}\|^2<2$.
Altogether, it follows from \eqref{e:rio2010-10-11u} and the
skew-adjointness of $\boldsymbol{S}$ that
\begin{align}
\label{e:xn+1}
\boldsymbol{x}_n+\lambda_n(\boldsymbol{P}_{\boldsymbol{H}_n}
\boldsymbol{x}_n-\boldsymbol{x}_n)
&=\boldsymbol{x}_n+\lambda_n\bigg(\frac{\scal{\boldsymbol{p}_n-
\boldsymbol{x}_n}{\boldsymbol{y}_n-
\boldsymbol{q}_n}}{\|\boldsymbol{y}_n-
\boldsymbol{q}_n\|^2}\bigg)(\boldsymbol{y}_n-
\boldsymbol{q}_n)\nonumber\\
&=\boldsymbol{x}_n+\lambda_n\bigg(\frac{\scal{\boldsymbol{p}_n-
\boldsymbol{x}_n}{\boldsymbol{x}_n-
\boldsymbol{p}_n+\gamma_n\boldsymbol{S}(\boldsymbol{p}_n-
\boldsymbol{x}_n)}}{\|\boldsymbol{x}_n-
\boldsymbol{p}_n+\gamma_n\boldsymbol{S}(\boldsymbol{p}_n-
\boldsymbol{x}_n)\|^2}\bigg)(\boldsymbol{y}_n-
\boldsymbol{q}_n)\nonumber\\
&=\boldsymbol{x}_n+\lambda_n\bigg(\frac{\|\boldsymbol{x}_n-
\boldsymbol{p}_n\|^2}{\|\boldsymbol{x}_n-
\boldsymbol{p}_n\|^2+\gamma_n^2\|\boldsymbol{S}(\boldsymbol{p}_n-
\boldsymbol{x}_n)\|^2}\bigg)(\boldsymbol{q}_n-
\boldsymbol{y}_n)\nonumber\\
&=\boldsymbol{x}_n-\boldsymbol{y}_n+
\boldsymbol{q}_n
=\boldsymbol{x}_{n+1}.
\end{align}
Thus, the updating rule of algorithm of 
Theorem~\ref{t:broome-st2010-10-27} applied to $\boldsymbol{M}$ 
and $\boldsymbol{S}$ is given by 
\eqref{e:2010-11-20a}--\eqref{e:2010-11-20b}.
In turn, using results from \cite{Moor01}, this iteration process
can easily be modified to become strongly convergent.
\end{remark}

\section{Main results}
\label{sec:3}

The main result of the paper can now be presented. It consists of
an application of Theorem~\ref{t:broome-st2010-10-27} to find 
solutions to Problem~\ref{prob:2}, and thus obtain solutions to
Problem~\ref{prob:1}. The resulting algorithm employs the operators
$A$, $B$, and $L$ separately. Moreover, the operators $A$ and $B$
can be activated in parallel and all the steps involving $L$
are explicit.

\begin{theorem}
\label{t:nyc2010-10-31}
In Problem~\ref{prob:1}, suppose that $L\neq 0$ and that
$z\in\ran\big(A+L^*\circ B\circ (L\cdot-r)\big)$.
Let $(a_{1,n})_{n\in\NN}$, $(b_{1,n})_{n\in\NN}$, and
$(c_{1,n})_{n\in\NN}$ be absolutely summable sequences in $\HH$,
and let $(a_{2,n})_{n\in\NN}$, $(b_{2,n})_{n\in\NN}$, and
$(c_{2,n})_{n\in\NN}$ be absolutely summable sequences in $\GG$.
Furthermore, let $x_0\in\HH$, let $v_0\in\GG$, let 
$\varepsilon\in\left]0,1/(\|L\|+1)\right[$, 
let $(\gamma_n)_{n\in\NN}$ be a sequence in 
$[\varepsilon,(1-\varepsilon)/\|L\|\,]$, and set
\begin{equation}
\label{e:nypl2010-10-27b}
(\forall n\in\NN)\quad 
\begin{array}{l}
\left\lfloor
\begin{array}{l}
y_{1,n}=x_n-\gamma_n(L^*v_n+a_{1,n})\\
y_{2,n}=v_n+\gamma_n(Lx_n+a_{2,n})\\
p_{1,n}=J_{\gamma_n A}(y_{1,n}+\gamma_nz)+b_{1,n}\\
p_{2,n}=J_{\gamma_n B^{-1}}(y_{2,n}-\gamma_nr)+b_{2,n}\\
q_{1,n}=p_{1,n}-\gamma_n(L^*p_{2,n}+c_{1,n})\\
q_{2,n}=p_{2,n}+\gamma_n(Lp_{1,n}+c_{2,n})\\
x_{n+1}=x_n-y_{1,n}+q_{1,n}\\
v_{n+1}=v_n-y_{2,n}+q_{2,n}.
\end{array}
\right.\\
\end{array}
\end{equation}
Then the following hold for some solution $\overline{x}$ to 
\eqref{e:primal} and some solution $\overline{v}$ to 
\eqref{e:dual} such that 
$z-L^*\overline{v}\in A\overline{x}$ and 
$\overline{v}\in B(L\overline{x}-r)$.
\begin{enumerate}
\item
\label{t:nyc2010-10-31i-}
$x_n-p_{1,n}\to 0$ and $v_n-p_{2,n}\to 0$. 
\item
\label{t:nyc2010-10-31i}
$x_n\weakly\overline{x}$, $p_{1,n}\weakly\overline{x}$,  
$v_n\weakly\overline{v}$, and $p_{2,n}\weakly\overline{v}$.
\item
\label{t:nyc2010-10-31iii} 
Suppose that $A$ is uniformly monotone at $\overline{x}$.
Then $x_n\to\overline{x}$ and $p_{1,n}\to\overline{x}$.
\item
\label{t:nyc2010-10-31iv}
Suppose that $B^{-1}$ is uniformly monotone at $\overline{v}$.
Then $v_n\to\overline{v}$ and $p_{2,n}\to\overline{v}$.
\end{enumerate}
\end{theorem}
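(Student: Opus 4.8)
The plan is to recognize that the iteration \eqref{e:nypl2010-10-27b} is exactly the forward-backward-forward scheme of Theorem~\ref{t:broome-st2010-10-27} applied in the product space $\HHH=\KKK=\HH\oplus\GG$ to the operators $\boldsymbol{A}=\boldsymbol{M}$ and $\boldsymbol{B}=\boldsymbol{S}$ of Problem~\ref{prob:2}, and then to translate the conclusions of that theorem back into the primal-dual language of Problem~\ref{prob:1}. So the first step is verification of the hypotheses of Theorem~\ref{t:broome-st2010-10-27}: by Proposition~\ref{p:1}\ref{p:1i----}, $\boldsymbol{M}$ is maximally monotone; by Proposition~\ref{p:1}\ref{p:1i---}, $\boldsymbol{S}$ is linear, bounded, skew-adjoint, hence monotone and $\beta$-Lipschitzian with $\beta=\|\boldsymbol{S}\|=\|L\|>0$ (here we use $L\neq 0$); and by Proposition~\ref{p:2}\ref{p:2i}$\Leftrightarrow$\ref{p:2iii}, the assumption $z\in\ran(A+L^*\circ B\circ(L\cdot-r))$ gives $\zer(\boldsymbol{M}+\boldsymbol{S})\neq\emp$. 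The stepsize constraint $\gamma_n\in[\varepsilon,(1-\varepsilon)/\|L\|]$ with $\varepsilon\in\left]0,1/(\|L\|+1)\right[$ matches the one in Theorem~\ref{t:broome-st2010-10-27} verbatim.

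The second step is to identify the variables. Writing $\boldsymbol{x}_n=(x_n,v_n)$, $\boldsymbol{y}_n=(y_{1,n},y_{2,n})$, $\boldsymbol{p}_n=(p_{1,n},p_{2,n})$, $\boldsymbol{q}_n=(q_{1,n},q_{2,n})$, and the error terms $\boldsymbol{a}_n=(a_{1,n},a_{2,n})$, $\boldsymbol{b}_n=(b_{1,n},b_{2,n})$, $\boldsymbol{c}_n=(c_{1,n},c_{2,n})$, one checks componentwise that \eqref{e:nypl2010-10-27b} is precisely \eqref{e:rio2010-10-11u}. Indeed $\boldsymbol{S}(x_n,v_n)=(L^*v_n,-Lx_n)$, so $\boldsymbol{y}_n=\boldsymbol{x}_n-\gamma_n(\boldsymbol{S}\boldsymbol{x}_n+\boldsymbol{a}_n)$ unpacks to the first two lines of \eqref{e:nypl2010-10-27b}; the resolvent step uses Proposition~\ref{p:1}\ref{p:1i+}, which gives $J_{\gamma_n\boldsymbol{M}}(y_{1,n},y_{2,n})=(J_{\gamma_nA}(y_{1,n}+\gamma_n z),J_{\gamma_n B^{-1}}(y_{2,n}-\gamma_n r))$, matching lines three and four; the second $\boldsymbol{S}$-step and the final update unpack likewise. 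The norms $\|\boldsymbol{a}_n\|=(\|a_{1,n}\|^2+\|a_{2,n}\|^2)^{1/2}\leq\|a_{1,n}\|+\|a_{2,n}\|$ (and similarly for $\boldsymbol{b}_n$, $\boldsymbol{c}_n$) are summable by the absolute summability hypotheses, so \eqref{e:broome-st2010-10-27g} holds.

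The third step is to read off the conclusions. Theorem~\ref{t:broome-st2010-10-27}\ref{t:broome-st2010-10-27i} gives $\sum_n\|\boldsymbol{x}_n-\boldsymbol{p}_n\|^2<\pinf$, whence $x_n-p_{1,n}\to 0$ and $v_n-p_{2,n}\to 0$, which is \ref{t:nyc2010-10-31i-}. Theorem~\ref{t:broome-st2010-10-27}\ref{t:broome-st2010-10-27ii} produces $\boldsymbol{\overline{x}}=(\overline{x},\overline{v})\in\zer(\boldsymbol{M}+\boldsymbol{S})$ with $\boldsymbol{x}_n\weakly\boldsymbol{\overline{x}}$ and $\boldsymbol{p}_n\weakly\boldsymbol{\overline{x}}$; projecting onto the two factors of $\KKK$ gives the four weak convergences in \ref{t:nyc2010-10-31i}. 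By Proposition~\ref{p:2}\ref{p:2i-}, $(\overline{x},\overline{v})\in\mathcal{P}\times\mathcal{D}$, and the computation in the proof of Proposition~\ref{p:2}\ref{p:2i-} shows that membership in $\zer(\boldsymbol{M}+\boldsymbol{S})$ is equivalent to $z-L^*\overline{v}\in A\overline{x}$ and $\overline{v}\in B(L\overline{x}-r)$, giving the asserted coupling between $\overline{x}$ and $\overline{v}$.

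For the strong-convergence parts \ref{t:nyc2010-10-31iii} and \ref{t:nyc2010-10-31iv}, the idea is to invoke Theorem~\ref{t:broome-st2010-10-27}\ref{t:broome-st2010-10-27iiib}: it suffices to show that uniform monotonicity of $A$ at $\overline{x}$ (resp.\ of $B^{-1}$ at $\overline{v}$) forces $\boldsymbol{M}=(-z+A)\times(r+B^{-1})$ to be uniformly monotone at $\boldsymbol{\overline{x}}=(\overline{x},\overline{v})$. This is the one spot needing a small argument rather than pure bookkeeping: if $A-(\cdot)$ satisfies $\scal{\overline{x}-y}{u-v}\geq\phi(\|\overline{x}-y\|)$ on its graph for an appropriate modulus $\phi$, then for $\boldsymbol{M}$ one has, writing a generic graph pair as $((y,w),(u',v'))$ with $u'\in -z+Ay$, $v'\in r+B^{-1}w$, that $\scal{(\overline{x},\overline{v})-(y,w)}{(\,\cdot\,)-(\,\cdot\,)}\geq\phi(\|\overline{x}-y\|)\geq\phi\big((\|\overline{x}-y\|^2+\|\overline{v}-w\|^2)^{1/2}-\|\overline{v}-w\|\big)$ — so one must massage the modulus; a cleaner route is to note $\|\overline{x}-y\|\geq\|(\overline{x},\overline{v})-(y,w)\|/\sqrt{2}$ is false in general, so instead use that the zero is \emph{unique} in the relevant component: since $\boldsymbol{x}_n\weakly\boldsymbol{\overline{x}}$ and, by \ref{t:broome-st2010-10-27iii}, $(\barp_n,\boldsymbol{u}_n)\in\gr(\boldsymbol{M}+\boldsymbol{S})$ with $\boldsymbol{u}_n\to\boldsymbol{0}$, uniform monotonicity of the $A$-component at $\overline{x}$ already yields $p_{1,n}\to\overline{x}$ directly from the definition of uniform monotonicity applied to that component (the $\boldsymbol{S}$ part contributes nothing to the diagonal inner product since $\boldsymbol{S}$ is skew). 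Thus $p_{1,n}\to\overline{x}$, and then $x_n\to\overline{x}$ by \ref{t:nyc2010-10-31i-}; part \ref{t:nyc2010-10-31iv} is symmetric, using the $B^{-1}$-component. I expect this modulus/component bookkeeping in the last two parts to be the only genuinely delicate point; everything else is a direct transcription through Propositions~\ref{p:1} and~\ref{p:2}.
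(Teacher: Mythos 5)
Your proposal is correct and follows the paper's strategy: parts \ref{t:nyc2010-10-31i-} and \ref{t:nyc2010-10-31i} are obtained exactly as in the paper, by verifying via Propositions~\ref{p:1} and~\ref{p:2} that \eqref{e:nypl2010-10-27b} is Theorem~\ref{t:broome-st2010-10-27} applied to $\boldsymbol{M}$ and $\boldsymbol{S}$ in $\KKK$ and then reading off the conclusions. The only place where your route diverges is in parts \ref{t:nyc2010-10-31iii}--\ref{t:nyc2010-10-31iv}, and there you correctly diagnose the trap: uniform monotonicity of $A$ at $\overline{x}$ does \emph{not} make $\boldsymbol{M}$ uniformly monotone at $(\overline{x},\overline{v})$ (the $B^{-1}$ block contributes no modulus), so Theorem~\ref{t:broome-st2010-10-27}\ref{t:broome-st2010-10-27iiib} cannot be invoked off the shelf. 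Your recovery --- pair $(\barp_n,\boldsymbol{u}_n)\in\gr(\boldsymbol{M}+\boldsymbol{S})$ with $(\boldsymbol{\overline{x}},\boldsymbol{0})$, note that $\scal{\barp_n-\boldsymbol{\overline{x}}}{\boldsymbol{S}\barp_n-\boldsymbol{S}\boldsymbol{\overline{x}}}=0$ by skewness, bound the $A$-block below by $\phi(\|\widetilde{p}_{1,n}-\overline{x}\|)$ and the $B^{-1}$-block below by $0$, and let $\scal{\barp_n-\boldsymbol{\overline{x}}}{\boldsymbol{u}_n}\to 0$ --- is valid and is in substance the same mechanism as the paper's, which unpacks everything into components: the paper's inequalities \eqref{e:Aunifmon}--\eqref{e:2010-11-11c} add a uniform-monotonicity bound for $A$ to a plain monotonicity bound for $B$ so that the $L^*$ cross terms cancel up to $\scal{x_n-\overline{x}}{L^*(\widetilde{p}_{2,n}-v_n)}\to 0$, which is precisely your skewness cancellation in coordinates. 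One cosmetic difference: the paper works with an arbitrary dual certificate $v\in B(L\overline{x}-r)$ with $z=u+L^*v$ rather than with the weak limit $\overline{v}$ itself; either choice works. Your packaging is arguably cleaner, at the cost of re-using the internal quantities $\barp_n$, $\boldsymbol{u}_n$ from the proof of Theorem~\ref{t:broome-st2010-10-27} --- which the paper also does, so this is not an objection.
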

\begin{proof}
Consider the setting of Problem~\ref{prob:2}.
As seen in Proposition~\ref{p:1}, $\boldsymbol{M}$ 
is maximally monotone, and $\boldsymbol{S}\in\BL(\KKK)$ is 
monotone and Lipschitzian with constant $\|L\|$.
Moreover, Proposition~\ref{p:2} yields
\begin{equation}
\label{e:2010-11-10d}
\emp\neq\zer(\boldsymbol{M}+\boldsymbol{S})\subset
{\mathcal P}\times{\mathcal D}.
\end{equation}
Now set 
\begin{equation}
\label{e:nyc2010-10-31y'}
(\forall n\in\NN)\quad
\begin{cases}
\boldsymbol{x}_n=(x_n,v_n)\\
\boldsymbol{y}_n=(y_{1,n},y_{2,n})\\
\boldsymbol{p}_n=(p_{1,n},p_{2,n})\\
\barp_n=(\widetilde{p}_{1,n},\widetilde{p}_{2,n})\\
\boldsymbol{q}_n=(q_{1,n},q_{2,n}).
\end{cases}
\quad\text{and}\qquad
\begin{cases}
\boldsymbol{a}_n=(a_{1,n},a_{2,n})\\
\boldsymbol{b}_n=(b_{1,n},b_{2,n})\\
\boldsymbol{c}_n=(c_{1,n},c_{2,n}).
\end{cases}
\end{equation}
Then, using \eqref{e:nyc2010-10-31z} and 
Proposition~\ref{p:1}\ref{p:1i+},
\eqref{e:nypl2010-10-27b} we can written as 
\eqref{e:rio2010-10-11u} in $\KKK$. Moreover, our assumptions
imply that \eqref{e:broome-st2010-10-27g} is satisfied.
Hence, using \eqref{e:broome-st2010-10-29f}, we obtain
\begin{equation}
\label{e:2010-11-11j}
p_{1,n}-\widetilde{p}_{1,n}\to 0\quad\text{and}\quad
p_{2,n}-\widetilde{p}_{2,n}\to 0.
\end{equation}
Furthermore, we derive from \eqref{e:broome-st2010-10-29a} and 
\eqref{e:nyc2010-10-31z}
that 
\begin{equation}
\label{e:prim_dual}
(\forall n\in\NN)\quad
\begin{cases}
\gamma_n^{-1}(x_n-{\widetilde p}_{1,n})
+L^*{\widetilde p}_{2,n}-L^*v_n\in 
A{\widetilde p}_{1,n}-z+L^*{\widetilde p}_{2,n}\\
\gamma_n^{-1}(v_n-{\widetilde p}_{2,n})
-L{\widetilde p}_{1,n}+Lx_n\in B^{-1}{\widetilde p}_{2,n}
+r-L{\widetilde p}_{1,n}.
\end{cases}
\end{equation}
These observations allow us to establish the following.

\ref{t:nyc2010-10-31i-}\&\ref{t:nyc2010-10-31i}: 
These follows from Theorem~\ref{t:broome-st2010-10-27}%
\ref{t:broome-st2010-10-27i}\&\ref{t:broome-st2010-10-27ii}
applied to $\boldsymbol{M}$ and $\boldsymbol{S}$ in $\KKK$.

\ref{t:nyc2010-10-31iii}: 
Since $\overline{x}$ solves \eqref{e:primal}, 
there exist $u\in\HH$ and $v\in\GG$ such that
\begin{equation}
\label{e:solprim}
u\in A\overline{x},\quad v\in B(L\overline{x}-r),
\quad\text{and}\quad z=u+L^*v.
\end{equation}
Now let $n\in\NN$. We derive from \eqref{e:prim_dual} that
\begin{equation}
\gamma_n^{-1}(x_n-{\widetilde p}_{1,n})-L^*v_n+z\in 
A{\widetilde p}_{1,n}
\quad\text{and}\quad
\gamma_n^{-1}(v_n-{\widetilde p}_{2,n})+Lx_n-r\in 
B^{-1}{\widetilde p}_{2,n},
\end{equation}
which yields
\begin{equation}
\label{e:2010-11-11a}
\gamma_n^{-1}(x_n-{\widetilde p}_{1,n})-L^*v_n+z\in 
A{\widetilde p}_{1,n}
\quad\text{and}\quad
{\widetilde p}_{2,n}\in B\big(\gamma_n^{-1}(v_n-{\widetilde p}_{2,n})
+Lx_n-r\big).
\end{equation}
Now set
\begin{equation}
\label{e:2010-11-11b}
\alpha_n=\|x_n-{\widetilde p}_{1,n}\|\big(\varepsilon^{-1}
\|{\widetilde p}_{1,n}-\overline{x}\|
+\|L\|\,\|v_n-v\|\big)
\quad\text{and}\quad
\beta_n=\varepsilon^{-1}\|v_n-{\widetilde p}_{2,n}\|\,
\|{\widetilde p}_{2,n}-v\|.
\end{equation}
It follows from \eqref{e:solprim}, 
\eqref{e:2010-11-11a},
and the uniform monotonicity of $A$ that there exists 
an increasing function $\phi\colon\RP\to\RPX$ that vanishes only 
at $0$ such that
\begin{multline}
\label{e:Aunifmon}
\alpha_n+\scal{x_n-\overline{x}}{L^*v-L^*v_n}\\
\begin{aligned}[b]
&\geq\varepsilon^{-1}\|{\widetilde p}_{1,n}-\overline{x}\|\,
\|x_n-{\widetilde p}_{1,n}\|
+\scal{{\widetilde p}_{1,n}-x_n}{L^*v-L^*v_n}
+\scal{x_n-\overline{x}}{L^*v-L^*v_n}\\
&=\varepsilon^{-1}\|{\widetilde p}_{1,n}-\overline{x}\|\,
\|x_n-{\widetilde p}_{1,n}\|
+\scal{{\widetilde p}_{1,n}-\overline{x}}{L^*v-L^*v_n}\\
&\geq\scal{{\widetilde p}_{1,n}-\overline{x}}
{\gamma_n^{-1}(x_n-{\widetilde p}_{1,n})-L^*v_n+L^*v}\\
&=\scal{{\widetilde p}_{1,n}-\overline{x}}
{\gamma_n^{-1}(x_n-{\widetilde p}_{1,n})-L^*v_n+z-u}\\
&\geq\phi(\|{\widetilde p}_{1,n}-\overline{x}\|).
\end{aligned}
\end{multline}
On the other hand, since $B$ is monotone, 
\eqref{e:2010-11-11b}, 
\eqref{e:solprim}, and \eqref{e:2010-11-11a} yield
\begin{align}
\label{e:Bmon}
\beta_n+\scal{x_n-\overline{x}}{L^*{\widetilde p}_{2,n}-L^*v}
&\geq\scal{\gamma_n^{-1}(v_n-{\widetilde p}_{2,n})
+L(x_n-\overline{x})}{{\widetilde p}_{2,n}-v}\nonumber\\
&=\scal{(\gamma_n^{-1}(v_n-{\widetilde p}_{2,n})
+Lx_n-r)-(L\overline{x}-r)}{{\widetilde p}_{2,n}-v}\nonumber\\
&\geq 0.
\end{align}
Upon adding these two inequalities, we obtain
\begin{equation}
\label{e:2010-11-11c}
\alpha_n+\beta_n+\|x_n-\overline{x}\|\,\|L\|\,
\|{\widetilde p}_{2,n}-v_n\|
\geq\alpha_n+\beta_n+\scal{x_n-\overline{x}}
{L^*({\widetilde p}_{2,n}-v_n)}\geq
\phi(\|{\widetilde p}_{1,n}-\overline{x}\|).
\end{equation}
Hence, since \ref{t:nyc2010-10-31i}, \ref{t:nyc2010-10-31i-},
and \eqref{e:2010-11-11j} imply that the sequences
$(x_n)_{n\in\NN}$, $(v_n)_{n\in\NN}$,
$({\widetilde p}_{1,n})_{n\in\NN}$, and
$({\widetilde p}_{2,n})_{n\in\NN}$ are bounded, it 
follows from \eqref{e:2010-11-11b}, \eqref{e:2010-11-11j},
and \ref{t:nyc2010-10-31i-} that 
$\phi(\|{\widetilde p}_{1,n}-\overline{x}\|)\to 0$, 
from which we infer that ${\widetilde p}_{1,n}\to\overline{x}$ 
and, by \eqref{e:2010-11-11j}, that $p_{1,n}\to \overline{x}$. 
In turn, \ref{t:nyc2010-10-31i-} yields
$x_n\to \overline{x}$.

\ref{t:nyc2010-10-31iv}: Proceed as in 
\ref{t:nyc2010-10-31iii}, using the dual objects.
\end{proof}

\begin{remark}
\label{r:1}
Using a well-known resolvent identity, the computation of 
$p_{2,n}$ in \eqref{e:nypl2010-10-27b} can be 
performed in terms of the resolvent of $B$ via the identity
$J_{\gamma_nB^{-1}}y=y-\gamma_n J_{\gamma_n^{-1}B}(\gamma_n^{-1}y)$.
\end{remark}

\begin{remark}
Set $\boldsymbol{\mathcal {Z}}=
\menge{(x,v)\in{\mathcal P}\times{\mathcal D}}
{z-L^*{v}\in A{x}\;\:\text{and}\;\:{v}\in B(L{x}-r)}$.
Since Theorem~\ref{t:nyc2010-10-31} is an application of
Theorem~\ref{t:broome-st2010-10-27} in $\KKK$, we deduce from 
Remark~\ref{r:remhalf} that the updating process for $(x_n,v_n)$ 
in \eqref{e:nypl2010-10-27b} results from a relaxed projection
onto a closed affine halfspace $\boldsymbol{H}_n$ containing 
$\boldsymbol{\mathcal Z}$, namely 
\begin{equation}
\label{e:valparaiso2010-11-24a}
({x}_{n+1},{v}_{n+1})=({x}_{n},{v}_{n})+\lambda_n\big(
P_{\boldsymbol{H}_n}({x}_{n},{v}_{n})-({x}_{n},{v}_{n})\big),
\end{equation}
where
\begin{multline}
\label{e:halfsp2}
\boldsymbol{H}_n=\menge{(x,v)\in\KKK}
{\scal{x}{y_{1,n}-q_{1,n}}+
\scal{v}{y_{2,n}-q_{2,n}}\leq 
\scal{p_{1,n}}{y_{1,n}-q_{1,n}}
+\scal{p_{2,n}}{y_{2,n}-q_{2,n}}}\\
\text{and}\quad
\lambda_n=1+\gamma_n^2\frac{\|L(p_{1,n}-x_n)\|^2+
\|L^*(p_{2,n}-v_n)\|^2}{\|p_{1,n}-x_n\|^2+
\|p_{2,n}-v_n\|^2}.
\end{multline}
In the special case when $\GG=\HH$ and $L=\Id$, 
an analysis of such outer projection methods is provided in 
\cite{Svai08}.
\end{remark}

\begin{corollary}
\label{c:nyc2010-10-31-1}
Let $A_1\colon\HH\to 2^{\HH}$ and $A_2\colon\HH\to 2^{\HH}$ be 
maximally monotone operators such that $\zer(A_1+A_2)\neq\emp$.
Let $(b_{1,n})_{n\in\NN}$ and $(b_{2,n})_{n\in\NN}$ be absolutely 
summable sequences in $\HH$, let $x_0$ and $v_0$ be in $\HH$, 
let $\varepsilon\in\left]0,1/2\right[$, let $(\gamma_n)_{n\in\NN}$ 
be a sequence in $[\varepsilon,1-\varepsilon]$, and set
\begin{equation}
\label{e:nypl2010-10-27s}
(\forall n\in\NN)\quad 
\begin{array}{l}
\left\lfloor
\begin{array}{l}
p_{1,n}=J_{\gamma_n A_1}(x_n-\gamma_nv_n)+b_{1,n}\\
p_{2,n}=J_{\gamma_n A_2^{-1}}(v_n+\gamma_nx_n)+b_{2,n}\\
x_{n+1}=p_{1,n}+\gamma_n(v_n-p_{2,n})\\
v_{n+1}=p_{2,n}+\gamma_n(p_{1,n}-x_n).
\end{array}
\right.\\
\end{array}
\end{equation}
Then the following hold for some $\overline{x}\in\zer(A_1+A_2)$ 
and some
$\overline{v}\in\zer(-A_1^{-1}\circ(-\Id)+A_2^{-1})$ such that 
$-\overline{v}\in A_1\overline{x}$ and 
$\overline{v}\in A_2\overline{x}$.
\begin{enumerate}
\item
\label{c:nyc2010-10-31-ai}
$x_n\weakly\overline{x}$ and $v_n\weakly\overline{v}$.
\item
\label{c:nyc2010-10-31-aiii}
Suppose that $A_1$ is uniformly monotone at $\overline{x}$.
Then $x_n\to\overline{x}$.
\item
\label{c:nyc2010-10-31-aiv}
Suppose that $A_2^{-1}$ is uniformly monotone at $\overline{v}$.
Then $v_n\to\overline{v}$.
\end{enumerate}
\end{corollary}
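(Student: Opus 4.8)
The plan is to derive Corollary~\ref{c:nyc2010-10-31-1} as a direct specialization of Theorem~\ref{t:nyc2010-10-31} to the case $\GG=\HH$, $L=\Id$, $z=0$, $r=0$, with the identifications $A=A_1$ and $B=A_2$. First I would check that the abstract hypotheses transfer: with $L=\Id$ we have $\|L\|=1$, so the admissible range $\varepsilon\in\left]0,1/(\|L\|+1)\right[=\left]0,1/2\right[$ and $(\gamma_n)_{n\in\NN}$ in $[\varepsilon,(1-\varepsilon)/\|L\|]=[\varepsilon,1-\varepsilon]$ match exactly the ranges stated in the corollary. The hypothesis $z\in\ran(A+L^*\circ B\circ(L\cdot-r))$ becomes $0\in\ran(A_1+A_2)$, i.e.\ $\zer(A_1+A_2)\neq\emp$, which is assumed. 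The error sequences $(a_{i,n})$ and $(c_{i,n})$ of Theorem~\ref{t:nyc2010-10-31} are taken to be zero, while $(b_{1,n})$ and $(b_{2,n})$ are the given absolutely summable sequences; all absolute-summability requirements are then satisfied.

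Next I would substitute $L=\Id$, $z=0$, $r=0$, $a_{i,n}=c_{i,n}=0$ into the iteration \eqref{e:nypl2010-10-27b} and simplify. The lines for $y_{1,n}$ and $y_{2,n}$ become $y_{1,n}=x_n-\gamma_nv_n$ and $y_{2,n}=v_n+\gamma_nx_n$; the resolvent lines become $p_{1,n}=J_{\gamma_nA_1}(x_n-\gamma_nv_n)+b_{1,n}$ and $p_{2,n}=J_{\gamma_nA_2^{-1}}(v_n+\gamma_nx_n)+b_{2,n}$, matching the first two lines of \eqref{e:nypl2010-10-27s}. For the update, $q_{1,n}=p_{1,n}-\gamma_np_{2,n}$ and $q_{2,n}=p_{2,n}+\gamma_np_{1,n}$, hence
\[
x_{n+1}=x_n-y_{1,n}+q_{1,n}=x_n-(x_n-\gamma_nv_n)+p_{1,n}-\gamma_np_{2,n}=p_{1,n}+\gamma_n(v_n-p_{2,n}),
\]
and similarly $v_{n+1}=v_n-y_{2,n}+q_{2,n}=p_{2,n}+\gamma_n(p_{1,n}-x_n)$, which is exactly \eqref{e:nypl2010-10-27s}. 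So the two iteration schemes coincide term by term.

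It then remains only to translate the conclusions. The dual inclusion \eqref{e:dual} with $L=\Id$, $z=0$, $r=0$ reads $0\in-A_1^{-1}(-v)+A_2^{-1}v$, i.e.\ $\overline v\in\zer(-A_1^{-1}\circ(-\Id)+A_2^{-1})$; and the side conditions $z-L^*\overline v\in A\overline x$, $\overline v\in B(L\overline x-r)$ become $-\overline v\in A_1\overline x$ and $\overline v\in A_2\overline x$, precisely the extra properties claimed. Part~\ref{c:nyc2010-10-31-ai} is then Theorem~\ref{t:nyc2010-10-31}\ref{t:nyc2010-10-31i}; part~\ref{c:nyc2010-10-31-aiii} is Theorem~\ref{t:nyc2010-10-31}\ref{t:nyc2010-10-31iii} applied with $A=A_1$; and part~\ref{c:nyc2010-10-31-aiv} is Theorem~\ref{t:nyc2010-10-31}\ref{t:nyc2010-10-31iv} applied with $B=A_2$, since $B^{-1}=A_2^{-1}$. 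The only mild subtlety — and the step I would be most careful about — is bookkeeping: making sure the degenerate choices $z=0$, $r=0$ and the vanishing of four of the six error sequences are legitimate (they trivially are, since the zero sequence is absolutely summable and $0\in\HH$), and that the algebraic elimination of $y_{1,n},y_{2,n},q_{1,n},q_{2,n}$ in favor of $p_{1,n},p_{2,n}$ is carried out without sign errors. No genuinely new analytic obstacle arises; the corollary is purely a packaging of Theorem~\ref{t:nyc2010-10-31}.
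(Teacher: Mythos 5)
Your proposal is correct and is exactly the paper's argument: the paper proves this corollary in one line by applying Theorem~\ref{t:nyc2010-10-31} with $\GG=\HH$, $L=\Id$, $A=A_1$, $B=A_2$, $r=0$, $z=0$, and your write-up simply carries out the routine verifications (parameter ranges, vanishing error sequences, algebraic simplification of the iteration, translation of the dual inclusion) that the paper leaves implicit. No gaps.
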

\begin{proof}
Apply Theorem~\ref{t:nyc2010-10-31} with $\GG=\HH$, $L=\Id$, 
$A=A_1$, $B=A_2$, $r=0$, and $z=0$.
\end{proof}

\begin{remark}
\label{r:nyc2010-10-31-1}
The most popular algorithm to find a zero of the sum of two 
maximally monotone operators is the Douglas-Rachford algorithm
\cite{Joca09,Ecks92,Lion79,Svai10} (see \eqref{e:7ans}). 
Corollary~\ref{c:nyc2010-10-31-1} provides an alternative scheme
which is also based on evaluations of the resolvents of the 
two operators.
\end{remark}

\begin{corollary}
\label{c:nyc2010-10-31-b}
In Problem~\ref{prob:1}, suppose that 
$L\neq 0$ and that $\zer(L^*BL)\neq\emp$.
Let $(a_{1,n})_{n\in\NN}$ and
$(c_{1,n})_{n\in\NN}$ be absolutely summable sequences in $\HH$,
and let $(a_{2,n})_{n\in\NN}$, $(b_{n})_{n\in\NN}$, and
$(c_{2,n})_{n\in\NN}$ be absolutely summable sequences in $\GG$.
Let $x_0\in\HH$, let $v_0\in\GG$, let 
$\varepsilon\in\left]0,1/(\|L\|+1)\right[$, 
let $(\gamma_n)_{n\in\NN}$ be a sequence in 
$[\varepsilon,(1-\varepsilon)/\|L\|\,]$, and set
\begin{equation}
\label{e:nypl2010-10-27bx}
(\forall n\in\NN)\quad 
\begin{array}{l}
\left\lfloor
\begin{array}{l}
s_{n}=\gamma_n(L^*v_n+a_{1,n})\\
y_{n}=v_n+\gamma_n(Lx_n+a_{2,n})\\
p_{n}=J_{\gamma_n B^{-1}}y_{n}+b_{n}\\
x_{n+1}=x_n-\gamma_n(L^*p_n+c_{1,n})\\
v_{n+1}=p_{n}-\gamma_n(Ls_{n}+c_{2,n}).
\end{array}
\right.\\
\end{array}
\end{equation}
Then the following hold for some $\overline{x}\in\zer(L^*BL)$ and 
some $\overline{v}\in(\ran L)^\bot\cap B(L\overline{x})$.
\begin{enumerate}
\item
\label{c:nyc2010-10-31-bi}
$x_n\weakly\overline{x}$ and $v_n\weakly\overline{v}$.
\item
\label{c:nyc2010-10-31-biv}
Suppose that $B^{-1}$ is uniformly monotone at $\overline{v}$.
Then $v_n\to\overline{v}$.
\end{enumerate}
\end{corollary}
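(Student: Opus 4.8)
The plan is to derive Corollary~\ref{c:nyc2010-10-31-b} as a specialization of Theorem~\ref{t:nyc2010-10-31}. First I would apply Theorem~\ref{t:nyc2010-10-31} to Problem~\ref{prob:1} with the same $\GG$ and $L$, with $A\colon\HH\to 2^\HH\colon x\mapsto\{0\}$ (the zero operator; it equals $N_\HH$, hence is maximally monotone), with $B\leftarrow B$, and with $z=0$ and $r=0$. With these data the operator appearing in \eqref{e:primal} is $L^*\circ B\circ L$, so the standing hypothesis $z\in\ran(A+L^*\circ B\circ(L\cdot-r))$ reduces to $0\in\ran(L^*BL)$, i.e.\ $\zer(L^*BL)\neq\emp$, which is exactly what is assumed, and $L\neq 0$ is assumed as well. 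For the error sequences of Theorem~\ref{t:nyc2010-10-31} I would take its $b_{1,n}$ to be $0$, its $a_{1,n}$, $c_{1,n}$, $a_{2,n}$, $b_{2,n}$ equal to the corollary's $a_{1,n}$, $c_{1,n}$, $a_{2,n}$, $b_n$, and its $c_{2,n}$ equal to the corollary's $a_{2,n}-c_{2,n}$; all of these are absolutely summable.

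The next step is to check that, with these choices, iteration \eqref{e:nypl2010-10-27b} is literally iteration \eqref{e:nypl2010-10-27bx}. Since $A=0$ gives $J_{\gamma_n A}=\Id$, and since $z=0$, the third line of \eqref{e:nypl2010-10-27b} yields $p_{1,n}=y_{1,n}=x_n-\gamma_n(L^*v_n+a_{1,n})=x_n-s_n$, with $s_n$ as in \eqref{e:nypl2010-10-27bx}. Plugging $p_{1,n}=x_n-s_n$, $r=0$ and $z=0$ into the remaining lines, the update $x_{n+1}=x_n-y_{1,n}+q_{1,n}$ collapses to $x_n-\gamma_n(L^*p_{2,n}+c_{1,n})$, while in $v_{n+1}=v_n-y_{2,n}+q_{2,n}$ the two copies of $\gamma_n Lx_n$ cancel, leaving $p_{2,n}-\gamma_n(Ls_n+a_{2,n}-c_{2,n})$ with the theorem's $c_{2,n}$, which by the relabelling above equals $p_{2,n}-\gamma_n(Ls_n+c_{2,n})$ with the corollary's $c_{2,n}$. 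Writing $p_n=p_{2,n}$, this is exactly \eqref{e:nypl2010-10-27bx}.

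Finally I would translate the conclusions. Theorem~\ref{t:nyc2010-10-31}\ref{t:nyc2010-10-31i} gives $x_n\weakly\overline{x}$ and $v_n\weakly\overline{v}$, which is assertion~\ref{c:nyc2010-10-31-bi}. The theorem also provides $\overline{x}$ solving \eqref{e:primal}, hence (as $A=0$, $z=0$, $r=0$) $\overline{x}\in\zer(L^*BL)$, together with $\overline{v}$ satisfying $z-L^*\overline{v}\in A\overline{x}$ and $\overline{v}\in B(L\overline{x}-r)$; with the present data these two relations read $L^*\overline{v}=0$, i.e.\ $\overline{v}\in(\ran L)^\bot$, and $\overline{v}\in B(L\overline{x})$, so $\overline{v}\in(\ran L)^\bot\cap B(L\overline{x})$ as claimed. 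Assertion~\ref{c:nyc2010-10-31-biv} is then Theorem~\ref{t:nyc2010-10-31}\ref{t:nyc2010-10-31iv} verbatim; there is no counterpart of Theorem~\ref{t:nyc2010-10-31}\ref{t:nyc2010-10-31iii} since $A=0$ is never uniformly monotone. The only genuinely non-automatic part — and the one I would write out carefully — is the bookkeeping of the second paragraph: matching the two iteration formats line by line and confirming that the relabelled error sequences remain absolutely summable. Everything else is a direct invocation of Theorem~\ref{t:nyc2010-10-31}.
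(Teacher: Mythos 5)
Your proposal is correct and is exactly the paper's proof, which reads in its entirety ``Apply Theorem~\ref{t:nyc2010-10-31} with $A=0$, $r=0$, and $z=0$''; you have merely written out the bookkeeping (in particular the relabelling $c_{2,n}\mapsto a_{2,n}-c_{2,n}$ needed to make the two iterations coincide literally, which the paper leaves implicit) and the translation of the conclusions, all of which checks out.
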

\begin{proof}
Apply Theorem~\ref{t:nyc2010-10-31} with $A=0$, $r=0$, and $z=0$.
\end{proof}

\begin{remark}
\label{r:nyc2010-10-31-b}
In connection with Corollary~\ref{c:nyc2010-10-31-b}, a weakly 
convergent splitting method was proposed in \cite{Penn02} 
for finding a zero of $L^*BL$. This method requires the additional 
assumption that $\ran L$ be closed. In addition,
unlike the algorithm described in \eqref{e:nypl2010-10-27bx},
it requires the exact implementation of the generalized inverse 
of $L$ at each iteration, which is challenging task. 
\end{remark}

Next, we extend \eqref{e:primal} to the problem of solving an 
inclusion involving the sum of $m$ composite monotone operators.
We obtain an algorithm in which the operators 
$(B_i)_{1\leq i\leq m}$ can be activated in parallel, and
independently from the transformations $(L_i)_{1\leq i\leq m}$.

\begin{theorem}
\label{t:2010-11-11}
Let $z\in\HH$ and let $(\omega_i)_{1\leq i\leq m}$ be reals in 
$\left]0,1\right]$ such that $\sum_{i=1}^m\omega_i=1$. 
For every $i\in\{1,\ldots,m\}$, let 
$(\GG_i,\|\cdot\|_{\GG_i})$ be a real Hilbert space, 
let $r_i\in\GG_i$, let $B_i\colon\GG_i\to 2^{\GG_i}$ 
be maximally monotone, and suppose that $0\neq L_i\in\BL(\HH,\GG_i)$.
Moreover, assume that 
\begin{equation}
\label{e:2010-11-03}
z\in\ran\sum_{i=1}^m\omega_iL_i^*\circ B_i\circ (L_i\cdot-r_i).
\end{equation}
Consider the problem
\begin{equation}
\label{e:2010-11-11x}
\text{find}\quad x\in\HH\;\;\text{such that}\quad
z\in\sum_{i=1}^m\omega_iL_i^*B_i(L_ix-r_i),
\end{equation}
and the problem 
\begin{equation}
\label{e:eqdualprod}
\!\!\text{find}\!\!\quad \!\!v_1\in\GG_1,\ldots,v_m\in\GG_m
\quad\!\!\!
\text{such that}\!\!\quad\!\!\sum_{i=1}^m\omega_iL_i^*v_i=z
\!\!\quad\!\!\text{and}\quad
\!\!\!(\exi x\in\HH)\quad\!\!\!\!\!
\begin{cases}
v_1\!\in B_1(L_1x-r_1)\\
\hspace{0.5cm}\vdots\\
v_m\!\in B_m(L_mx-r_m).
\end{cases}
 \end{equation}
Now, for every $i\in\{1,\ldots,m\}$, let $(a_{1,i,n})_{n\in\NN}$ and
$(c_{1,i,n})_{n\in\NN}$ be absolutely summable sequences in $\HH$, 
let $(a_{2,i,n})_{n\in\NN}$, $(b_{i,n})_{n\in\NN}$, and 
$(c_{2,i,n})_{n\in\NN}$ be absolutely summable sequences in
$\GG_i$, let $x_{i,0}\in\HH$, and let $v_{i,0}\in\GG_i$.
Furthermore, set $\beta=\max_{1\leq i\leq m}\|L_i\|$, let 
$\varepsilon\in\left]0,1/(\beta+1)\right[$, 
let $(\gamma_n)_{n\in\NN}$ be a sequence in 
$[\varepsilon,(1-\varepsilon)/\beta]$, and set
\begin{equation}
\label{e:2010-11-11z}
(\forall n\in\NN)\quad 
\begin{array}{l}
\left\lfloor
\begin{array}{l}
x_{n}=\sum_{i=1}^m\omega_ix_{i,n}\\[2mm]
\operatorname{For}\;i=1,\ldots,m\\
\left\lfloor
\begin{array}{l}
y_{1,i,n}=x_{i,n}-\gamma_n(L_i^*v_{i,n}+a_{1,i,n})\\
y_{2,i,n}=v_{i,n}+\gamma_n(L_ix_{i,n}+a_{2,i,n})\\
\end{array}
\right.\\[5mm]
p_{1,n}=\sum_{i=1}^m\omega_iy_{1,i,n}+\gamma_nz\\[3mm]
\operatorname{For}\;i=1,\ldots,m\\
\left\lfloor
\begin{array}{l}
p_{2,i,n}=J_{\gamma_nB_i^{-1}}(y_{2,i,n}-\gamma_nr_i)+b_{i,n}\\
q_{1,i,n}=p_{1,n}-\gamma_n(L_i^*p_{2,i,n}+c_{1,i,n})\\
q_{2,i,n}=p_{2,i,n}+\gamma_n(L_ip_{1,n}+c_{2,i,n})\\
x_{i,n+1}=x_{i,n}-y_{1,i,n}+q_{1,i,n}\\
v_{i,n+1}=v_{i,n}-y_{2,i,n}+q_{2,i,n}.
\end{array}
\right.\\[1mm]
\end{array}
\right.\\
\end{array}
\end{equation}
Then the following hold for some solution $\overline{x}$ to 
\eqref{e:2010-11-11x} and some solution 
$(\overline{v}_i)_{1\leq i\leq m}$ to \eqref{e:eqdualprod} such 
that, for every $i\in\{1,\ldots,m\}$, 
$\overline{v}_i\in B_i(L_i\overline{x}-r_i)$.
\begin{enumerate}
\item
\label{t:2010-11-11i} 
$x_n\weakly\overline{x}$ and,
for every $i\in\{1,\ldots,m\}$, $v_{i,n}\weakly\overline{v}_i$.
\item
\label{t:2010-11-11iii} 
Suppose that, for every $i\in\{1,\ldots,m\}$,
$B_i^{-1}$ is strongly monotone at $\overline{v}_i$. Then, for every 
$i\in\{1,\ldots,m\}$, $v_{i,n}\to\overline{v}_i$.
\end{enumerate}
\end{theorem}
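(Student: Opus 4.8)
The plan is to reduce \eqref{e:2010-11-11x}--\eqref{e:eqdualprod} to a single instance of Problem~\ref{prob:1} by a product-space construction and then to invoke Theorem~\ref{t:nyc2010-10-31}. Let $\HHH$ be the Hilbert direct sum of $m$ copies of $\HH$ and let $\GGG=\GG_1\oplus\cdots\oplus\GG_m$, each renormed with the $(\omega_i)_{1\leq i\leq m}$-weighted inner product $\pscal{(x_i)_{1\leq i\leq m}}{(y_i)_{1\leq i\leq m}}=\sum_{i=1}^m\omega_i\scal{x_i}{y_i}$; rescaling an inner product by positive factors preserves maximal monotonicity, so the operators below are well behaved in the renormed spaces. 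Put $\boldsymbol{D}=\menge{(x,\ldots,x)}{x\in\HH}$, a closed subspace of $\HHH$, and define $\boldsymbol{A}=N_{\boldsymbol{D}}$, $\boldsymbol{B}=B_1\times\cdots\times B_m$, $\boldsymbol{L}\colon\HHH\to\GGG\colon(x_i)_{1\leq i\leq m}\mapsto(L_ix_i)_{1\leq i\leq m}$, $\boldsymbol{z}=(z,\ldots,z)$, and $\boldsymbol{r}=(r_1,\ldots,r_m)$. Three elementary facts drive the reduction: because the same weights appear on $\HHH$ and $\GGG$, the adjoint is $\boldsymbol{L}^*\colon(v_i)_{1\leq i\leq m}\mapsto(L_i^*v_i)_{1\leq i\leq m}$ and $\|\boldsymbol{L}\|\leq\beta=\max_{1\leq i\leq m}\|L_i\|$; since $\sum_{i=1}^m\omega_i=1$, the resolvent $J_{\gamma\boldsymbol{A}}$ is the $\pscal{\cdot}{\cdot}$-projection $P_{\boldsymbol{D}}$, which sends $(y_i)_{1\leq i\leq m}$ to the constant tuple whose common entry is the weighted barycenter $\sum_{i=1}^m\omega_iy_i$; and $J_{\gamma\boldsymbol{B}^{-1}}$ acts coordinatewise through the resolvents $J_{\gamma B_i^{-1}}$.

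Writing $\boldsymbol{x}_n=(x_{i,n})_{1\leq i\leq m}$ and $\boldsymbol{v}_n=(v_{i,n})_{1\leq i\leq m}$, and forming the error tuples of $\KKK=\HHH\oplus\GGG$ from $(a_{1,i,n},a_{2,i,n})$, $(0,b_{i,n})$, and $(c_{1,i,n},c_{2,i,n})$, one checks line by line that \eqref{e:2010-11-11z} is precisely the iteration \eqref{e:nypl2010-10-27b} written in $\KKK$: the scalar $p_{1,n}$ in \eqref{e:2010-11-11z} is the common entry of $J_{\gamma_n\boldsymbol{A}}(\boldsymbol{y}_{1,n}+\gamma_n\boldsymbol{z})$, while $p_{2,i,n}$, $q_{1,i,n}$, $q_{2,i,n}$, $x_{i,n+1}$, $v_{i,n+1}$ reproduce coordinatewise the remaining lines of \eqref{e:nypl2010-10-27b}, the factors $\omega_i$ cancelling exactly because of how the weights were placed. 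It then suffices to check the hypotheses of Theorem~\ref{t:nyc2010-10-31}: $\boldsymbol{A}$ is maximally monotone as a normal cone, $\boldsymbol{B}$ is maximally monotone as a product of maximally monotone operators, $\boldsymbol{L}\neq 0$, the prescribed ranges of $\varepsilon$ and $(\gamma_n)_{n\in\NN}$ are admissible since $\|\boldsymbol{L}\|\leq\beta$, the error sequences are absolutely summable in $\KKK$ because they are so coordinatewise, and the range condition $\boldsymbol{z}\in\ran(\boldsymbol{A}+\boldsymbol{L}^*\circ\boldsymbol{B}\circ(\boldsymbol{L}\cdot-\boldsymbol{r}))$ follows from \eqref{e:2010-11-03}: if $z=\sum_{i=1}^m\omega_iL_i^*v_i$ with $v_i\in B_i(L_i\overline{x}-r_i)$, then $\boldsymbol{x}=(\overline{x},\ldots,\overline{x})\in\boldsymbol{D}$, $\boldsymbol{v}=(v_i)_{1\leq i\leq m}\in\boldsymbol{B}(\boldsymbol{L}\boldsymbol{x}-\boldsymbol{r})$, and $\sum_{i=1}^m\omega_i(z-L_i^*v_i)=0$, so that $\boldsymbol{z}-\boldsymbol{L}^*\boldsymbol{v}\in\boldsymbol{D}^\perp=N_{\boldsymbol{D}}\boldsymbol{x}$.

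Theorem~\ref{t:nyc2010-10-31} then supplies $\boldsymbol{x}_n\weakly\boldsymbol{\overline{x}}$ and $\boldsymbol{v}_n\weakly\boldsymbol{\overline{v}}$ with $\boldsymbol{z}-\boldsymbol{L}^*\boldsymbol{\overline{v}}\in\boldsymbol{A}\boldsymbol{\overline{x}}$ and $\boldsymbol{\overline{v}}\in\boldsymbol{B}(\boldsymbol{L}\boldsymbol{\overline{x}}-\boldsymbol{r})$. In particular $\boldsymbol{\overline{x}}\in\dom\boldsymbol{A}=\boldsymbol{D}$, so $\boldsymbol{\overline{x}}=(\overline{x},\ldots,\overline{x})$ for some $\overline{x}\in\HH$; reading the two inclusions coordinatewise then gives $\overline{v}_i\in B_i(L_i\overline{x}-r_i)$ for every $i$ and $\sum_{i=1}^m\omega_iL_i^*\overline{v}_i=z$, so $\overline{x}$ solves \eqref{e:2010-11-11x} and $(\overline{v}_i)_{1\leq i\leq m}$ solves \eqref{e:eqdualprod}. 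Since weak convergence in a finite direct sum is coordinatewise, $x_{i,n}\weakly\overline{x}$ and $v_{i,n}\weakly\overline{v}_i$ for every $i$, whence $x_n=\sum_{i=1}^m\omega_ix_{i,n}\weakly\overline{x}$; this establishes \ref{t:2010-11-11i}. For \ref{t:2010-11-11iii}, suppose each $B_i^{-1}$ is strongly monotone at $\overline{v}_i$ with modulus $\alpha_i>0$. Because the weighted inner product on $\GGG$ splits over the coordinates, a direct computation shows that $\boldsymbol{B}^{-1}$ is then strongly monotone at $\boldsymbol{\overline{v}}$ with modulus $\min_{1\leq i\leq m}\alpha_i$, hence uniformly monotone there; Theorem~\ref{t:nyc2010-10-31}\ref{t:nyc2010-10-31iv} thus yields $\boldsymbol{v}_n\to\boldsymbol{\overline{v}}$, i.e. $v_{i,n}\to\overline{v}_i$ for every $i$.

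The routine part is the line-by-line matching of \eqref{e:2010-11-11z} with \eqref{e:nypl2010-10-27b} and the transcription of Theorem~\ref{t:nyc2010-10-31} into $\KKK$. The only genuinely delicate point is the bookkeeping that makes this possible: the weighted inner products must be chosen so that $\boldsymbol{L}^*$ acts coordinatewise --- which forces identical weights on $\HHH$ and $\GGG$ --- while simultaneously $P_{\boldsymbol{D}}$ returns exactly the barycenter $\sum_{i=1}^m\omega_i(\cdot)$ appearing in \eqref{e:2010-11-11z}; with any other normalization the factors $\omega_i$ would fail to cancel and the two iterations would no longer coincide.
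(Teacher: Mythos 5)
Your proposal is correct and follows essentially the same route as the paper: the identical weighted product-space construction with $\boldsymbol{A}=N_{\boldsymbol{D}}$ on the diagonal, the product operator $\boldsymbol{B}$, the coordinatewise $\boldsymbol{L}$, the line-by-line identification of \eqref{e:2010-11-11z} with \eqref{e:nypl2010-10-27b}, and the reduction of \ref{t:2010-11-11iii} to the strong monotonicity of $\boldsymbol{B}^{-1}$ with modulus $\min_i\alpha_i$. The only cosmetic difference is that you deduce $x_n\weakly\overline{x}$ from coordinatewise weak convergence of the $x_{i,n}$, whereas the paper passes through $P_{\boldsymbol{V}}$; both are valid.
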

\begin{proof}
Let $\HHH$ be the real Hilbert space obtained by endowing 
the Cartesian product $\HH^m$ with the scalar product 
$\scal{\cdot}{\cdot}_{\HHH}\colon(\boldsymbol{x},\boldsymbol{y})
\mapsto\sum_{i=1}^m\omega_i\scal{x_i}{y_i}$, where 
$\boldsymbol{x}=(x_i)_{1\leq i\leq m}$ and 
$\boldsymbol{y}=(y_i)_{1\leq i\leq m}$ denote generic 
elements in $\HHH$. The associated norm is 
$\|\cdot\|_{\HHH}\colon\boldsymbol{x}\mapsto
\sqrt{\sum_{i=1}^m\omega_i\|x_i\|^2}$.
Likewise, let $\GGG$ denote the real Hilbert space obtained by 
endowing $\GG_1\times\cdots\times\GG_m$ with the scalar product 
and the associated norm respectively defined by
\begin{equation}
\label{e:palawan-mai2008-}
\scal{\cdot}{\cdot}_{\GGG}
\colon(\boldsymbol{y},\boldsymbol{z})\mapsto
\sum_{i=1}^m\omega_i\scal{y_i}{z_i}_{\GG_i}
\quad\text{and}\quad\|\cdot\|_{\GGG}\colon
\boldsymbol{y}\mapsto\sqrt{\sum_{i=1}^m\omega_i\|y_i\|_{\GG_i}^2}.
\end{equation}
Define 
\begin{equation}
\label{e:D}
\boldsymbol{V}=\menge{(x,\dots,x)\in\HHH}{x\in\HH}
\quad\text{and}\quad
{\boldsymbol j}\colon\HH\to{\boldsymbol V}
\colon x\mapsto(x,\ldots,x).
\end{equation}
In view of \eqref{e:normalcone}, the normal cone operator of
$\boldsymbol{V}$ is
\begin{equation}
\label{e:dorthog}
N_{\boldsymbol{V}}\colon\HHH\to 2^{\HHH}\colon\boldsymbol{x}\mapsto
\begin{cases}
\boldsymbol{V}^{\bot}=\menge{\boldsymbol{u}\in\HHH}
{\sum_{i=1}^m\omega_iu_i=0},
&\text{if}\;\;\boldsymbol{x}\in\boldsymbol{V};\\
\emp,&\text{otherwise.}
\end{cases}
\end{equation} 
Now set
\begin{equation}
\label{e:ABLrz}
{\boldsymbol A}=N_{\boldsymbol V},\;
{\boldsymbol B}\colon\GGG\to 2^{\GGG}\colon\boldsymbol{y}\mapsto 
\overset{m}{\underset{i=1}{\cart}}B_iy_i,\;
{\boldsymbol L}\colon\HHH\to\GGG\colon\boldsymbol{x}\mapsto
(L_ix_i)_{1\leq i\leq m},
\;\text{and}\;
\boldsymbol{r}=(r_i)_{1\leq i\leq m}.
\end{equation}
It is easily checked that $\boldsymbol{A}$ and $\boldsymbol{B}$
are maximally monotone with resolvents
\begin{equation}
\label{e:JC3}
(\forall\gamma\in\RPP)\quad 
J_{\gamma\boldsymbol A}\colon{\boldsymbol x}
\mapsto P_{\boldsymbol V}{\boldsymbol x}=
{\boldsymbol j}\bigg(\sum_{i=1}^m\omega_ix_i\bigg)
\quad\text{and}\quad
J_{\gamma\boldsymbol{B}^{-1}}\colon{\boldsymbol y}\mapsto
\big(J_{\gamma B_i^{-1}}y_i\big)_{1\leq i\leq m}.
\end{equation}
Moreover, $\boldsymbol{L}\in\BL(\HHH,\GGG)$ and
\begin{equation}
\label{e:L*}
\boldsymbol{L}^*\colon\GGG\to\HHH\colon\boldsymbol{v}\mapsto
\big(L_i^*v_i\big)_{1\leq i\leq m}.
\end{equation}
Now, set 
\begin{equation}
\label{e:2010-11-11k}
\begin{cases}
\boldsymbol{\mathcal P}=\menge{\boldsymbol{x}\in\HHH}
{\boldsymbol{j}(z)\in
\boldsymbol{A}\boldsymbol{x}+\boldsymbol{L}^*
\boldsymbol{B}(\boldsymbol{L}\boldsymbol{x}-\boldsymbol{r})}\\
\boldsymbol{\mathcal D}=\menge{\boldsymbol{v}\in\GGG}
{-\boldsymbol{r}\in-\boldsymbol{L}\boldsymbol{A}^{-1}
(\boldsymbol{j}(z)-\boldsymbol{L}^*\boldsymbol{v})
+\boldsymbol{B}^{-1}\boldsymbol{v}}.
\end{cases}
\end{equation}
Then, for every $x\in\HH$,
\begin{eqnarray}
\label{e:valparaiso2010-11-23a}
x\;\text{solves}\;\eqref{e:2010-11-11x}
&\Leftrightarrow&z\in\sum_{i=1}^m\omega_iL_i^*\big(B_i(L_ix-r_i)\big)
\nonumber\\
&\Leftrightarrow&\bigg(\exi(v_i)_{1\leq i\leq m}\in
\overset{m}{\underset{i=1}{\cart}}B_i(L_ix-r_i)\bigg)
\quad z=\sum_{i=1}^m\omega_iL_i^*v_i\nonumber\\
&\Leftrightarrow&\bigg(\exi(v_i)_{1\leq i\leq m}\in
\overset{m}{\underset{i=1}{\cart}}B_i(L_ix-r_i)\bigg)
\quad \sum_{i=1}^m\omega_i(z-L_i^*v_i)=0\nonumber\\
&\Leftrightarrow&\big(\exi\boldsymbol{v}\in
\boldsymbol{B}(\boldsymbol{L}\boldsymbol{j}(x)-\boldsymbol{r})\big)
\quad\boldsymbol{j}(z)-\boldsymbol{L}^*\boldsymbol{v}\in 
\boldsymbol{V}^\bot=\boldsymbol{A}\boldsymbol{j}(x)\nonumber\\
&\Leftrightarrow&\boldsymbol{j}(z)\in
\boldsymbol{A}\boldsymbol{j}(x)+\boldsymbol{L}^*
\boldsymbol{B}(\boldsymbol{L}\boldsymbol{j}(x)-\boldsymbol{r})
\nonumber\\
&\Leftrightarrow&\boldsymbol{j}(x)\in\boldsymbol{\mathcal P}
\subset\boldsymbol{V}.
\end{eqnarray}
Moreover, for every $\boldsymbol{v}\in\GGG$, 
\begin{eqnarray}
\label{e:valparaiso2010-11-23b}
\boldsymbol{v}\;\text{solves}\;\eqref{e:eqdualprod}
&\Leftrightarrow&
\sum_{i=1}^m\omega_i(z-L_i^*v_i)=0\quad\text{and}\quad
(\exi x\in\HH)\quad (v_i)_{1\leq i\leq m}\in
\overset{m}{\underset{i=1}{\cart}}B_i(L_ix-r_i)
\nonumber\\
&\Leftrightarrow&
(\exi x\in\HH)\quad\boldsymbol{j}(z)-\boldsymbol{L}^*\boldsymbol{v}
\in\boldsymbol{V}^\bot=\boldsymbol{A}\boldsymbol{j}(x)
\quad\text{and}\quad\boldsymbol{v}\in
\boldsymbol{B}(\boldsymbol{L}\boldsymbol{j}(x)-\boldsymbol{r})
\nonumber\\
&\Leftrightarrow&
(\exi x\in\HH)\quad\boldsymbol{j}(x)\in\boldsymbol{A}^{-1}\big
(\boldsymbol{j}(z)-\boldsymbol{L}^*\boldsymbol{v}\big)\quad
\text{and}\quad\boldsymbol{L}\boldsymbol{j}(x)-\boldsymbol{r}\in
\boldsymbol{B}^{-1}\boldsymbol{v}
\nonumber\\
&\Leftrightarrow&
(\exi \boldsymbol{x}\in\boldsymbol{V}=\dom\boldsymbol{A})\quad
\boldsymbol{x}\in\boldsymbol{A}^{-1}\big
(\boldsymbol{j}(z)-\boldsymbol{L}^*\boldsymbol{v}\big)\quad
\text{and}\quad\boldsymbol{L}\boldsymbol{x}-\boldsymbol{r}\in
\boldsymbol{B}^{-1}\boldsymbol{v}
\nonumber\\
&\Leftrightarrow&
-\boldsymbol{r}\in-\boldsymbol{L}
\boldsymbol{A}^{-1}\big(\boldsymbol{j}(z)-\boldsymbol{L}^*
\boldsymbol{v}\big)+\boldsymbol{B}^{-1}\boldsymbol{v}
\nonumber\\
&\Leftrightarrow&
\boldsymbol{v}\in\boldsymbol{\mathcal D}.
\end{eqnarray}
Altogether, solving the inclusion \eqref{e:2010-11-11x} in $\HH$ 
is equivalent to solving the inclusion $\boldsymbol{j}(z)\in
\boldsymbol{A}\boldsymbol{x}+\boldsymbol{L}^*\boldsymbol{B}
(\boldsymbol{L}\boldsymbol{x}-\boldsymbol{r})$ in $\HHH$
and solving \eqref{e:eqdualprod}
in $\GGG$ is equivalent to solving $-\boldsymbol{r}\in
\boldsymbol{B}^{-1}\boldsymbol{v}-\boldsymbol{L}
\boldsymbol{A}^{-1}\big(\boldsymbol{j}(z)-\boldsymbol{L}^*
\boldsymbol{v}\big)$ in $\GGG$. 
Next, let us show that the algorithm
described in \eqref{e:2010-11-11z} is a particular case of the
algorithm described in \eqref{e:nypl2010-10-27b} in 
Theorem~\ref{t:nyc2010-10-31}. To this end define,
for every $n\in\NN$,
$\boldsymbol{x}_n=(x_{i,n})_{1\leq i\leq m}$,
$\boldsymbol{v}_n=(v_{i,n})_{1\leq i\leq m}$,
$\boldsymbol{y}_{1,n}=(y_{1,i,n})_{1\leq i\leq m}$,
$\boldsymbol{y}_{2,n}=(y_{2,i,n})_{1\leq i\leq m}$,
$\boldsymbol{p}_{1,n}=\boldsymbol{j}(p_{1,n})$,
$\boldsymbol{p}_{2,n}=(p_{2,i,n})_{1\leq i\leq m}$,
$\boldsymbol{q}_{1,n}=(q_{1,i,n})_{1\leq i\leq m}$,
$\boldsymbol{q}_{2,n}=(q_{2,i,n})_{1\leq i\leq m}$,
$\boldsymbol{a}_{1,n}=(a_{1,i,n})_{1\leq i\leq m}$,
$\boldsymbol{a}_{2,n}=(a_{2,i,n})_{1\leq i\leq m}$,
$\boldsymbol{b}_{2,n}=(b_{i,n})_{1\leq i\leq m}$,
$\boldsymbol{c}_{1,n}=(c_{1,i,n})_{1\leq i\leq m}$, and
$\boldsymbol{c}_{2,n}=(c_{2,i,n})_{1\leq i\leq m}$.
Then we deduce from \eqref{e:ABLrz}, \eqref{e:JC3}, 
and \eqref{e:L*} that, in terms of these new variables, 
\eqref{e:2010-11-11z} can be rewritten as
\begin{equation}
\label{e:nypl2010-10-27bp}
(\forall n\in\NN)\quad 
\begin{array}{l}
\left\lfloor
\begin{array}{l}
\boldsymbol{y}_{1,n}=\boldsymbol{x}_n-\gamma_n(\boldsymbol{L}^*
\boldsymbol{v}_n+\boldsymbol{a}_{1,n})\\
\boldsymbol{y}_{2,n}=\boldsymbol{v}_n+\gamma_n(\boldsymbol{L}
\boldsymbol{x}_n+\boldsymbol{a}_{2,n})\\
\boldsymbol{p}_{1,n}=J_{\gamma_n \boldsymbol{A}}
(\boldsymbol{y}_{1,n}+\gamma_nz)\\
\boldsymbol{p}_{2,n}=J_{\gamma_n \boldsymbol{B}^{-1}}
(\boldsymbol{y}_{2,n}-\gamma_nr)+\boldsymbol{b}_{2,n}\\
\boldsymbol{q}_{1,n}=\boldsymbol{p}_{1,n}-\gamma_n
(\boldsymbol{L}^*\boldsymbol{p}_{2,n}+\boldsymbol{c}_{1,n})\\
\boldsymbol{q}_{2,n}=\boldsymbol{p}_{2,n}+\gamma_n
(\boldsymbol{L}\boldsymbol{p}_{1,n}+\boldsymbol{c}_{2,n})\\
\boldsymbol{x}_{n+1}=\boldsymbol{x}_n-\boldsymbol{y}_{1,n}+
\boldsymbol{q}_{1,n}\\
\boldsymbol{v}_{n+1}=\boldsymbol{v}_n-\boldsymbol{y}_{2,n}+
\boldsymbol{q}_{2,n}.
\end{array}
\right.\\
\end{array}
\end{equation}
Moreover,
$\|\boldsymbol{L}\|\leq\max_{1\leq i\leq m}\|L_i\|=\beta$, and 
our assumptions imply that the sequences 
$(\boldsymbol{a}_{1,n})_{n\in\NN}$, 
$(\boldsymbol{c}_{1,n})_{n\in\NN}$,
$(\boldsymbol{a}_{2,n})_{n\in\NN}$, 
$(\boldsymbol{b}_{2,n})_{n\in\NN}$,
and $(\boldsymbol{c}_{2,n})_{n\in\NN}$ are absolutely summable.
Furthermore, \eqref{e:2010-11-03} and
\eqref{e:valparaiso2010-11-23a} assert that 
$\boldsymbol{j}(z)\in\ran(\boldsymbol{A}+\boldsymbol{L}^*\circ
\boldsymbol{B}\circ(\boldsymbol{L}\cdot-\boldsymbol{r}))$.

\ref{t:2010-11-11i}: 
It follows from 
Theorem~\ref{t:nyc2010-10-31}\ref{t:nyc2010-10-31i} that there 
exists $\overline{\boldsymbol{x}}\in\boldsymbol{\mathcal P}$ 
and $(\overline{v}_i)_{1\leq i\leq m}=
\overline{\boldsymbol{v}}\in\boldsymbol{\mathcal D}$ 
such that
$\boldsymbol{j}(z)-\boldsymbol{L}^*\overline{\boldsymbol{v}}\in
\boldsymbol{A}\overline{\boldsymbol{x}}$,
$\overline{\boldsymbol{v}}\in\boldsymbol{B}(\boldsymbol{L}
\overline{\boldsymbol{x}}-\boldsymbol{r})$, 
$\boldsymbol{x}_{n}\weakly\overline{\boldsymbol{x}}$, and
$\boldsymbol{v}_{n}\weakly\overline{\boldsymbol{v}}$.
Hence, 
$\boldsymbol{j}(x_n)=P_{\boldsymbol{V}}\boldsymbol{x}_{n}\weakly
P_{\boldsymbol{V}}\overline{\boldsymbol{x}}=
\overline{\boldsymbol{x}}$. Since 
\eqref{e:valparaiso2010-11-23a} asserts that there 
exists a solution $\overline{x}$ to \eqref{e:2010-11-11x} such that 
$\overline{\boldsymbol{x}}=\boldsymbol{j}(\overline{x})$, 
we obtain that 
$x_{n}=\boldsymbol{j}^{-1}(P_{\boldsymbol{V}}\boldsymbol{x}_{n})
\weakly\boldsymbol{j}^{-1}(\overline{\boldsymbol{x}})=\overline{x}$.
Altogether, by \eqref{e:valparaiso2010-11-23b}, 
for every $i\in\{1,\ldots,m\}$, $v_{i,n}\weakly\overline{v}_i$,
where $(\overline{v}_i)_{1\leq i\leq m}$ solves 
\eqref{e:eqdualprod}. 

\ref{t:2010-11-11iii}: 
Let $(\boldsymbol{w}_1,\boldsymbol{y}_1)$ and 
$(\boldsymbol{w}_2,\boldsymbol{y}_2)$ in $\gr\boldsymbol{B}^{-1}$.
We derive from  \eqref{e:ABLrz} that
$(\forall i\in\{1,\ldots,m\})$ $y_{1,i}\in B_i^{-1}w_{1,i}$
and $y_{2,i}\in B_i^{-1}w_{2,i}$.
Hence, since the operators $(B_i^{-1})_{1\leq i\leq m}$ are 
strongly monotone, there exist constants $(\rho_i)_{1\leq i\leq m}$
in $\RPP$ such that 
$\scal{\boldsymbol{y}_1-\boldsymbol{y}_2}{\boldsymbol{w}_1-
\boldsymbol{w}_2}_{\GGG}=\sum_{i=1}^m\omega_i
\scal{y_{1,i}-y_{2,i}}{w_{1,i}-w_{2,i}}_{\GG_i}
\geq\sum_{i=1}^m\omega_i\rho_i\|w_{1,i}-w_{2,i}\|_{\GG_i}^2
\geq\rho\|\boldsymbol{w}_1-\boldsymbol{w}_2\|_{\GGG}^2$,
where $\rho=\min_{1\leq i\leq m}\rho_i\in\RPP$. Therefore, 
$\boldsymbol{B}^{-1}$ is strongly monotone and hence 
uniformly monotone. Thus, the result follows from 
Theorem~\ref{t:nyc2010-10-31}\ref{t:nyc2010-10-31iv}.
\end{proof}

\section{Variational problems}
\label{sec:4}

We apply the results of the previous sections to
minimization problems. Let us first recall some standard notation 
and results \cite{Livre1,Zali02}. 
We denote by $\Gamma_0(\HH)$ the class 
of lower semicontinuous convex functions $f\colon\HH\to\RX$ such that
$\dom f=\menge{x\in\HH}{f(x)<\pinf}\neq\emp$. Now let
$f\in\Gamma_0(\HH)$. The conjugate of $f$ is the function 
$f^*\in\Gamma_0(\HH)$ defined by 
$f^*\colon u\mapsto\sup_{x\in\HH}(\scal{x}{u}-f(x))$. 
Moreover, for every $x\in\HH$, $f+\|x-\cdot\|^2/2$ possesses a 
unique minimizer, which is denoted by $\prox_fx$.
Alternatively, 
\begin{equation}
\label{e:prox2}
\prox_f=(\Id+\partial f)^{-1}=J_{\partial f},
\end{equation}
where $\partial f\colon\HH\to 2^{\HH}\colon x\mapsto
\menge{u\in\HH}{(\forall y\in\HH)\;\:\scal{y-x}{u}+f(x)\leq f(y)}$ 
is the subdifferential of $f$, which is a maximally monotone 
operator. Finally, let $C$ be a convex subset of $\HH$. 
The indicator function of $C$ is denoted by $\iota_C$, 
its support function by $\sigma_C$, and its strong relative interior 
(the set of points in $x\in C$ such that the cone 
generated by $-x+C$ is a closed vector subspace of $\HH$)
by $\sri C$.
The following facts will also be required.

\begin{proposition}
\label{p:cq}
Let $f\in\Gamma_0(\HH)$, let $g\in\Gamma_0(\GG)$,
let $L\in\BL(\HH,\GG)$, let $z\in\HH$, and let $r\in\GG$. 
Then the following hold. 
\begin{enumerate}
\item
\label{p:cqi}
$\zer(-z+\partial f+L^*\circ(\partial g)\circ(L\cdot -r))
\subset\Argmin(f-\scal{\cdot}{z}+g\circ(L\cdot-r))$.
\item
\label{p:cqii}
$\zer(r-(L\circ(\partial f^*)\circ(z-L^*\cdot))
+\partial g^*)\subset\Argmin(f^*(z-L^*\cdot)+g^*+\scal{r}{\cdot})$.
\item
\label{p:cqiii}
Suppose that one of the following is satisfied.
\begin{enumerate}
\item
\label{p:cqiiia}
$\Argmin(f+g\circ(L\cdot-r)-\scal{\cdot}{z})\neq\emp$ and 
$r\in\sri(L(\dom f)-\dom g)$.
\item
\label{p:cqiiib}
$\Argmin(f+g\circ(L\cdot-r)-\scal{\cdot}{z})
\subset\Argmin (f-\scal{\cdot}{z})
\cap\Argmin g\circ(L\cdot-r)\neq\emp$
and $r\in \sri(\ran L-\dom g)$.
\item
\label{p:cqiiic}
$f=\iota_C$ and $g=\iota_D$, $z=0$, where $C$ and $D$ are closed 
convex subset of $\HH$ and $\GG$, respectively, such that 
$C\cap L^{-1}(r+D)\neq\emp$ and $r\in\sri(\ran L-D)$.
\end{enumerate}
Then $z\in\ran(\partial f+L^*\circ(\partial g)\circ (L\cdot-r))$.
\end{enumerate}
\end{proposition}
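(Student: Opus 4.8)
The plan is to handle \ref{p:cqi}--\ref{p:cqii} via the easy (always-valid) inclusion directions of the subdifferential calculus together with Fermat's rule, and to handle \ref{p:cqiii} by invoking the \emph{exact} calculus rules that the qualification conditions guarantee.

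For \ref{p:cqi}: first I would take $\overline{x}\in\zer(-z+\partial f+L^*\circ(\partial g)\circ(L\cdot-r))$, which provides $u\in\partial f(\overline{x})$ and $v\in\partial g(L\overline{x}-r)$ with $z=u+L^*v$. Since $\partial(f-\scal{\cdot}{z})(\overline{x})=\partial f(\overline{x})-z$, since $L^*v\in\partial\big(g\circ(L\cdot-r)\big)(\overline{x})$ (immediate from the subgradient inequality for $g$ at $L\overline{x}-r$ together with $\scal{L(y-\overline{x})}{v}=\scal{y-\overline{x}}{L^*v}$), and since $\partial\varphi_1(\overline{x})+\partial\varphi_2(\overline{x})\subset\partial(\varphi_1+\varphi_2)(\overline{x})$ in general, I get $0=u-z+L^*v\in\partial\big(f-\scal{\cdot}{z}+g\circ(L\cdot-r)\big)(\overline{x})$, so Fermat's rule puts $\overline{x}$ in the asserted $\Argmin$. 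For \ref{p:cqii} I would run the same argument with $f^*\in\Gamma_0(\HH)$ and $g^*\in\Gamma_0(\GG)$ in place of $f,g$ and with the affine map $v\mapsto z-L^*v$, whose linear part $-L^*$ has adjoint $-L$: given $\overline{v}\in\zer(r-L\circ(\partial f^*)\circ(z-L^*\cdot)+\partial g^*)$, the inclusion directions of the chain and sum rules give $0\in r-L(\partial f^*)(z-L^*\overline{v})+\partial g^*(\overline{v})\subset\partial\big(f^*(z-L^*\cdot)+g^*+\scal{r}{\cdot}\big)(\overline{v})$, and Fermat's rule finishes it.

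For \ref{p:cqiii} the plan is to produce, in each case, a point $\overline{x}$ with $z\in\partial f(\overline{x})+L^*(\partial g)(L\overline{x}-r)$. Under \ref{p:cqiiia}: the condition $r\in\sri(L(\dom f)-\dom g)$ is exactly the qualification condition making the combined sum-plus-chain rule exact, i.e.\ $\partial\big(f+g\circ(L\cdot-r)-\scal{\cdot}{z}\big)=\partial f-z+L^*\circ(\partial g)\circ(L\cdot-r)$ (one checks this by writing the composite as $\widetilde f+\widetilde g\circ L$ with $\widetilde f=f-\scal{\cdot}{z}$, $\dom\widetilde f=\dom f$, and $\widetilde g=g(\cdot-r)$, $\dom\widetilde g=r+\dom g$, so that the hypothesis becomes the standard ``$0\in\sri(\dom\widetilde g-L(\dom\widetilde f))$'' form in \cite{Livre1}); then for any $\overline{x}$ in the nonempty set $\Argmin\big(f+g\circ(L\cdot-r)-\scal{\cdot}{z}\big)$, Fermat's rule yields $0\in\partial f(\overline{x})-z+L^*(\partial g)(L\overline{x}-r)$. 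Under \ref{p:cqiiib}: I would pick $\overline{x}$ in the nonempty set $\Argmin\big(f+g\circ(L\cdot-r)-\scal{\cdot}{z}\big)$; by hypothesis $\overline{x}$ also minimizes $f-\scal{\cdot}{z}$, so $z\in\partial f(\overline{x})$, and it minimizes $g\circ(L\cdot-r)$, so $0\in\partial\big(g\circ(L\cdot-r)\big)(\overline{x})$, where now the \emph{chain rule alone} is exact because the weaker condition $r\in\sri(\ran L-\dom g)$ holds; hence $0\in L^*(\partial g)(L\overline{x}-r)$ and, adding, $z\in\partial f(\overline{x})+L^*(\partial g)(L\overline{x}-r)$. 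Finally \ref{p:cqiiic} reduces to \ref{p:cqiiib}: with $f=\iota_C$, $g=\iota_D$, $z=0$ one has $f+g\circ(L\cdot-r)-\scal{\cdot}{z}=\iota_{C\cap L^{-1}(r+D)}$, whose minimizer set is $C\cap L^{-1}(r+D)$, nonempty by hypothesis and equal to $\Argmin\iota_C\cap\Argmin\big(\iota_D\circ(L\cdot-r)\big)$, and $\dom g=D$, so the hypotheses of \ref{p:cqiiib} are met.

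The routine part is the subgradient-inequality bookkeeping; the one point that needs care is the appeal to exact convex calculus in \ref{p:cqiiia}--\ref{p:cqiiib}: I must cite the correct qualification-condition versions of the sum and chain rules in \cite{Livre1}, verify that translating the inner argument by $r$ and subtracting the everywhere-finite continuous form $\scal{\cdot}{z}$ turn the stated $\sri$-conditions into the hypotheses of those theorems, and confirm that neither operation alters the relevant domains. That is the only genuine obstacle.
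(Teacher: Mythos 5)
Your proposal is correct and follows essentially the same route as the paper: parts \ref{p:cqi}--\ref{p:cqii} via the always-valid inclusion direction of the sum/chain rules plus Fermat's rule, and part \ref{p:cqiii} via the exact subdifferential calculus under the stated $\sri$ qualification conditions, with \ref{p:cqiiic} reduced to \ref{p:cqiiib}. The only (trivially fillable) elision is in \ref{p:cqiiib}, where you should note that the nonempty intersection $\Argmin(f-\scal{\cdot}{z})\cap\Argmin\big(g\circ(L\cdot-r)\big)$ is automatically contained in $\Argmin\big(f+g\circ(L\cdot-r)-\scal{\cdot}{z}\big)$, which justifies picking $\overline{x}$ in the latter set.
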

\begin{proof}
\ref{p:cqi}\&\ref{p:cqii}: By
\cite[Proposition~16.5(ii) and Theorem~16.2]{Livre1},
$\zer(-z+\partial f+L^*\circ(\partial g)\circ(L\cdot -r))
\subset\zer(\partial(f-\scal{\cdot}{z}+g\circ(L\cdot-r)))
=\Argmin(f-\scal{\cdot}{z}+g\circ(L\cdot-r))$. 
We obtain \ref{p:cqii} similarly.

\ref{p:cqiiia}:
By \cite[Theorem~16.2 and Theorem~16.37(i)]{Livre1}, we have 
\begin{alignat}{2}
\emp\neq \Argmin(f + g\circ (L\cdot -r) - \scal{\cdot}{z})
&=\zer\partial(f + g\circ (L\cdot -r) - \scal{\cdot}{z})\nonumber\\
&=\zer(-z+\partial f +  L^*\circ(\partial g)\circ(L\cdot-r)).
\end{alignat}
 
\ref{p:cqiiib}:
Since $r\in\sri(\ran L-\dom g)$, using \ref{p:cqi} and standard 
convex analysis, we obtain
\begin{align}
\Argmin (f-\scal{\cdot}{z})\cap\Argmin(g\circ(L\cdot-r))
&=\zer(-z+\partial f)\cap\zer\partial(g\circ(L\cdot-r))\nonumber\\
&=\zer(-z+\partial f)\cap\zer(L^*\circ(\partial g)\circ(L\cdot-r))
\nonumber\\
&\subset\zer(-z+\partial f +  L^*\circ(\partial g)\circ(L\cdot-r))
\nonumber\\
&\subset\Argmin(f+g\circ(L\cdot-r)-\scal{\cdot}{z}).
\end{align}
Therefore, the hypotheses yield
$\zer(-z+\partial f+L^*\circ(\partial g)\circ(L\cdot-r))=
\Argmin(f-\scal{\cdot}{z})\cap\Argmin(g\circ(L\cdot-r))\neq\emp$.

\ref{p:cqiiic}: 
Since $\dom(\iota_{C}+\iota_{D}(L\cdot-r))=C\cap L^{-1}(r+ D)$,
\begin{align}
\Argmin(\iota_C+\iota_D\circ(L\cdot-r))
&=\Argmin\iota_{C \cap L^{-1}(r+ D)}\nonumber\\
&=C\cap L^{-1}(r+ D)\nonumber\\
&=\Argmin\iota_C \cap \Argmin(\iota_D\circ(L\cdot-r))\neq\emp.
\end{align}
In view of \ref{p:cqii} applied to $f=\iota_C$, $g=\iota_D$, and 
$z=0$, the proof is complete.
\end{proof}

Our first result is a new splitting method for the 
Fenchel-Rockafellar duality framework 
\eqref{e:2010-11-18f}--\eqref{e:2010-11-18g}.

\begin{proposition}
\label{p:2010-11-12}
Let $f\in\Gamma_0(\HH)$, let $g\in\Gamma_0(\GG)$,
let $L\in\BL(\HH,\GG)$, let $z\in\HH$, 
and let $r\in\GG$. Suppose that 
$L\neq 0$ and that
\begin{equation}
\label{e:2010-11-13e}
z\in\ran\big(\partial f+L^*\circ(\partial g) \circ (L\cdot-r)\big).
\end{equation}
Consider the primal problem
\begin{equation}
\label{e:2010-11-13f}
\minimize{x\in\HH}{f(x)+g(Lx-r)-\scal{x}{z}},
\end{equation}
and the dual problem
\begin{equation}
\label{e:2010-11-13g}
\minimize{v\in\GG}{f^*(z-L^*v)+g^*(v)+\scal{v}{r}}.
\end{equation}
Let $(a_{1,n})_{n\in\NN}$, $(b_{1,n})_{n\in\NN}$, and
$(c_{1,n})_{n\in\NN}$ be absolutely summable sequences in $\HH$,
and let $(a_{2,n})_{n\in\NN}$, $(b_{2,n})_{n\in\NN}$, and
$(c_{2,n})_{n\in\NN}$ be absolutely summable sequences in $\GG$.
Furthermore, let $x_0\in\HH$, let $v_0\in\GG$, let 
$\varepsilon\in\left]0,1/(\|L\|+1)\right[$, 
let $(\gamma_n)_{n\in\NN}$ be a sequence in 
$[\varepsilon,(1-\varepsilon)/\|L\|\,]$, and set
\begin{equation}
\label{e:2010-11-13d}
(\forall n\in\NN)\quad 
\begin{array}{l}
\left\lfloor
\begin{array}{l}
y_{1,n}=x_n-\gamma_n(L^*v_n+a_{1,n})\\
y_{2,n}=v_n+\gamma_n(Lx_n+a_{2,n})\\
p_{1,n}=\prox_{\gamma_n f}(y_{1,n}+\gamma_nz)+b_{1,n}\\
p_{2,n}=\prox_{\gamma_n g^*}(y_{2,n}-\gamma_nr)+b_{2,n}\\
q_{1,n}=p_{1,n}-\gamma_n(L^*p_{2,n}+c_{1,n})\\
q_{2,n}=p_{2,n}+\gamma_n(Lp_{1,n}+c_{2,n})\\
x_{n+1}=x_n-y_{1,n}+q_{1,n}\\
v_{n+1}=v_n-y_{2,n}+q_{2,n}.
\end{array}
\right.\\
\end{array}
\end{equation}
Then the following hold for some solution $\overline{x}$ to 
\eqref{e:2010-11-13f} and some solution $\overline{v}$ to 
\eqref{e:2010-11-13g} such that 
$z-L^*\overline{v}\in\partial f(\overline{x})$ and 
$\overline{v}\in\partial g(L\overline{x}-r)$.
\begin{enumerate}
\item
\label{p:2010-11-12i-}
$x_n-p_{1,n}\to 0$ and $v_n-p_{2,n}\to 0$.
\item
\label{p:2010-11-12i}
$x_n\weakly\overline{x}$, $p_{1,n}\weakly\overline{x}$,
$v_n\weakly\overline{v}$, and $p_{2,n}\weakly\overline{v}$.
\item
\label{p:2010-11-12iii} 
Suppose that $f$ is uniformly convex at $\overline{x}$.
Then $x_n\to\overline{x}$ and $p_{1,n}\to\overline{x}$.
\item
\label{p:2010-11-12iv}
Suppose that $g^{*}$ is uniformly convex at $\overline{v}$.
Then $v_n\to\overline{v}$ and $p_{2,n}\to\overline{v}$.
\end{enumerate}
\end{proposition}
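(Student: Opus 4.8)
The plan is to recognize \eqref{e:2010-11-13d} as the particular instance of the algorithm \eqref{e:nypl2010-10-27b} obtained from Theorem~\ref{t:nyc2010-10-31} by taking $A=\partial f$ and $B=\partial g$. First I would recall the standard facts that $\partial f$ and $\partial g$ are maximally monotone, that $(\partial g)^{-1}=\partial g^*$, and that, by \eqref{e:prox2}, $J_{\gamma_n\partial f}=\prox_{\gamma_n f}$ and $J_{\gamma_n(\partial g)^{-1}}=J_{\gamma_n\partial g^*}=\prox_{\gamma_n g^*}$ for every $n\in\NN$. Substituting these identities into \eqref{e:nypl2010-10-27b} reproduces \eqref{e:2010-11-13d} verbatim. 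The remaining hypotheses of Theorem~\ref{t:nyc2010-10-31} hold trivially: $L\neq 0$ is assumed, the six error sequences are absolutely summable by assumption, and the qualification $z\in\ran\big(A+L^*\circ B\circ(L\cdot-r)\big)$ is precisely \eqref{e:2010-11-13e}.

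Applying Theorem~\ref{t:nyc2010-10-31} then produces points $\overline{x}\in{\mathcal P}$ and $\overline{v}\in{\mathcal D}$ for Problem~\ref{prob:1} with $A=\partial f$ and $B=\partial g$, satisfying $z-L^*\overline{v}\in\partial f(\overline{x})$ and $\overline{v}\in\partial g(L\overline{x}-r)$, together with the convergence assertions $x_n-p_{1,n}\to 0$, $v_n-p_{2,n}\to 0$, $x_n\weakly\overline{x}$, $p_{1,n}\weakly\overline{x}$, $v_n\weakly\overline{v}$, and $p_{2,n}\weakly\overline{v}$; this already gives \ref{p:2010-11-12i-} and \ref{p:2010-11-12i}. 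To convert these inclusions into optimality for \eqref{e:2010-11-13f} and \eqref{e:2010-11-13g}, I would invoke Proposition~\ref{p:cq}\ref{p:cqi}\&\ref{p:cqii}: membership $\overline{x}\in{\mathcal P}$ means $\overline{x}\in\zer(-z+\partial f+L^*\circ(\partial g)\circ(L\cdot-r))$, hence $\overline{x}\in\Argmin\big(f+g\circ(L\cdot-r)-\scal{\cdot}{z}\big)$, i.e.\ $\overline{x}$ solves \eqref{e:2010-11-13f}; likewise, using $\partial f^*=(\partial f)^{-1}$ and $\partial g^*=(\partial g)^{-1}$, membership $\overline{v}\in{\mathcal D}$ gives $\overline{v}\in\zer\big(r-(L\circ(\partial f^*)\circ(z-L^*\cdot))+\partial g^*\big)$, hence $\overline{v}$ solves \eqref{e:2010-11-13g}.

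For \ref{p:2010-11-12iii} and \ref{p:2010-11-12iv} I would translate the convexity hypotheses into uniform monotonicity hypotheses on the underlying operators. If $f$ is uniformly convex at $\overline{x}$, then $\partial f=A$ is uniformly monotone at $\overline{x}$, so Theorem~\ref{t:nyc2010-10-31}\ref{t:nyc2010-10-31iii} applies and yields $x_n\to\overline{x}$ and $p_{1,n}\to\overline{x}$. Similarly, if $g^*$ is uniformly convex at $\overline{v}$, then $\partial g^*=(\partial g)^{-1}=B^{-1}$ is uniformly monotone at $\overline{v}$, and Theorem~\ref{t:nyc2010-10-31}\ref{t:nyc2010-10-31iv} gives $v_n\to\overline{v}$ and $p_{2,n}\to\overline{v}$.

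The proof is essentially a matter of bookkeeping; the only step that is not purely mechanical is the last translation, namely pinning down the precise sense in which uniform convexity of $f$ at a point implies uniform monotonicity of $\partial f$ at that point, with a suitably chosen modulus $\phi$ as in Lemma~\ref{l:2009-09-20}\ref{l:2009-09-20i}. This is classical (see \cite{Livre1,Zali02}) and requires no new argument here, so I anticipate no genuine obstacle.
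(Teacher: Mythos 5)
Your proposal is correct and follows exactly the paper's own argument: specialize Theorem~\ref{t:nyc2010-10-31} with $A=\partial f$, $B=\partial g$, identify the resolvents with proximity operators via \eqref{e:prox2}, pass from the monotone inclusions to the minimization problems by Proposition~\ref{p:cq}\ref{p:cqi}\&\ref{p:cqii}, and use the standard fact that uniform convexity at a point implies uniform monotonicity of the subdifferential there. No gaps.
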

\begin{proof}
Suppose that $A=\partial f$ and $B=\partial g$ in
Problem~\ref{prob:1}. Then, since $A^{-1}=\partial f^*$
and $B^{-1}=\partial g^*$,
we derive from Proposition~\ref{p:cq}\ref{p:cqi}\&\ref{p:cqii} 
that the solutions to \eqref{e:primal} and \eqref{e:dual} are 
solutions to 
\eqref{e:2010-11-13f} and \eqref{e:2010-11-13g}, respectively. 
Moreover, \eqref{e:prox2} implies that \eqref{e:2010-11-13d} is 
a special case of \eqref{e:nypl2010-10-27b}. Finally, the uniform
convexity of a function $\varphi\in\Gamma_0(\HH)$ at a point of 
the domain of $\partial\varphi$ implies the uniform monotonicity 
of $\partial\varphi$ at that point \cite[Section~3.4]{Zali02}.
Altogether, the results follow from Theorem~\ref{t:nyc2010-10-31}. 
\end{proof}

\begin{remark}
\label{r:2010-11-12}
Here are some comments on Proposition~\ref{p:2010-11-12}.
\begin{enumerate}
\item
Sufficient conditions for \eqref{e:2010-11-13e} to hold are 
provided in Proposition~\ref{p:cq}.
\item
As in Remark~\ref{r:1}, if the proximity operator of $g$ is 
simpler to implement than that of $g^*$, $p_{2,n}$ in 
\eqref{e:2010-11-13d} can be computed via the identity
$\prox_{\gamma_n g^*}y=y-\gamma_n \prox_{\gamma_n^{-1}g}
(\gamma_n^{-1}y)$.
\item
In the special case when $\HH$ and $\GG$ are Euclidean spaces,
an alternative primal-dual algorithm is proposed in \cite{Chen94},
which also uses the proximity operators of $f$ and $g$, and the
operator $L$ in separate steps. This method is derived there in the
spirit of the proximal \cite{Roc76b} and alternating direction (see 
\cite{Fort83} and the references therein)
methods of multipliers.
\end{enumerate}
\end{remark}

We now turn our attention to problems involving
the sum of $m$ composite functions.

\begin{proposition}
\label{p:4}
Let $z\in\HH$ and let $(\omega_i)_{1\leq i\leq m}$ be reals in 
$\left]0,1\right]$ such that $\sum_{i=1}^m\omega_i=1$.
For every $i\in\{1,\ldots,m\}$, let 
$(\GG_i,\|\cdot\|_{\GG_i})$ be a real Hilbert space, 
let $r_i\in\GG_i$, let $g_i\in\Gamma_0(\GG_i)$, and suppose that
$0\neq L_i\in\BL(\HH,\GG_i)$. Moreover, assume that
\begin{equation}
\label{e:2010-11-03I}
z\in\ran\sum_{i=1}^m\omega_iL_i^*\circ(\partial g_i)
\circ(L_i\cdot-r_i).
\end{equation}
Consider the problem
\begin{equation}
\label{e:primalvar}
\minimize{x\in\HH}{\sum_{i=1}^m\omega_i\,g_i(L_ix-r_i)-
\scal{x}{z}}, 
\end{equation}
and the problem
\begin{equation}
\label{e:dualvar}
\minimize{\substack{v_1\in\GG_1,\ldots,v_m\in\GG_m\\
\sum_{i=1}^m\omega_iL_i^*v_i=z}}
{\sum_{i=1}^m\omega_i\,\big(g_i^*(v_i)+
\scal{v_i}{r_i}\big)}.
\end{equation}
For every $i\in\{1,\ldots,m\}$, let $(a_{1,i,n})_{n\in\NN}$ and 
$(c_{1,i,n})_{n\in\NN}$ be absolutely summable sequences in $\HH$, 
let $(a_{2,i,n})_{n\in\NN}$, $(b_{i,n})_{n\in\NN}$, and 
$(c_{2,i,n})_{n\in\NN}$ be absolutely summable sequences in
$\GG_i$, let $x_{i,0}\in\HH$, and let $v_{i,0}\in\GG_i$.
Furthermore, set $\beta=\max_{1\leq i\leq m}\|L_i\|$, let 
$\varepsilon\in\left]0,1/(\beta+1)\right[$, 
let $(\gamma_n)_{n\in\NN}$ be a sequence in 
$[\varepsilon,(1-\varepsilon)/\beta]$, and set
\begin{equation}
\label{e:2010-11-11var}
(\forall n\in\NN)\quad 
\begin{array}{l}
\left\lfloor
\begin{array}{l}
x_{n}=\sum_{i=1}^m\omega_ix_{i,n}\\
\operatorname{For}\;i=1,\ldots,m\\
\left\lfloor
\begin{array}{l}
y_{1,i,n}=x_{i,n}-\gamma_n(L_i^*v_{i,n}+a_{1,i,n})\\
y_{2,i,n}=v_{i,n}+\gamma_n(L_ix_{i,n}+a_{2,i,n})\\
\end{array}
\right.\\[3mm]
p_{1,n}=\sum_{i=1}^m\omega_iy_{1,i,n}+\gamma_nz\\[1mm]
\operatorname{For}\;i=1,\ldots,m\\
\left\lfloor
\begin{array}{l}
p_{2,i,n}=\prox_{\gamma_ng_i^*}(y_{2,i,n}-\gamma_nr_i)+b_{i,n}\\
q_{1,i,n}=p_{1,n}-\gamma_n(L_i^*p_{2,i,n}+c_{1,i,n})\\
q_{2,i,n}=p_{2,i,n}+\gamma_n(L_ip_{1,n}+c_{2,i,n})\\
x_{i,n+1}=x_{i,n}-y_{1,i,n}+q_{1,i,n}\\
v_{i,n+1}=v_{i,n}-y_{2,i,n}+q_{2,i,n}.
\end{array}
\right.\\[0mm]
\end{array}
\right.\\
\end{array}
\end{equation}
Then the following hold for some solution $\overline{x}$ to 
\eqref{e:primalvar} and some solution 
$(\overline{v}_i)_{1\leq i\leq m}$ to \eqref{e:dualvar} such that,
for every $i\in\{1,\ldots,m\}$, $\overline{v}_i\in\partial g_i
(L_i\overline{x}-r_i)$.
\begin{enumerate}
\item
\label{t:2010-11-11ivar} 
$x_n\weakly\overline{x}$ and,
for every $i\in\{1,\ldots,m\}$, $v_{i,n}\weakly\overline{v}_i$.
\item
\label{t:2010-11-11iiivar} 
Suppose that, for every $i\in\{1,\ldots,m\}$,
$g_i^*$ is strongly convex at $\overline{v}_i$.
Then, for every $i\in\{1,\ldots,m\}$, 
$v_{i,n}\to\overline{v}_i$.
\end{enumerate}
\end{proposition}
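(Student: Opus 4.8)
The plan is to obtain Proposition~\ref{p:4} as the specialization of Theorem~\ref{t:2010-11-11} in which each maximally monotone operator $B_i$ is taken to be $\partial g_i$. First I would record the standard facts that $\partial g_i$ is maximally monotone with $(\partial g_i)^{-1}=\partial g_i^*$, and that by \eqref{e:prox2} the resolvent $J_{\gamma_n B_i^{-1}}=J_{\gamma_n\partial g_i^*}$ coincides with $\prox_{\gamma_n g_i^*}$. With these substitutions the recursion \eqref{e:2010-11-11var} becomes identical to \eqref{e:2010-11-11z}, the hypotheses $0\neq L_i\in\BL(\HH,\GG_i)$ are those of Theorem~\ref{t:2010-11-11}, the summability requirements on the error sequences are unchanged, and the range condition \eqref{e:2010-11-03I} is precisely \eqref{e:2010-11-03}. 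Thus Theorem~\ref{t:2010-11-11} applies directly.

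It then remains to translate the conclusions from the language of monotone inclusions to that of minimization. Theorem~\ref{t:2010-11-11}\ref{t:2010-11-11i} yields $\overline{x}$ solving \eqref{e:2010-11-11x} and $(\overline{v}_i)_{1\leq i\leq m}$ solving \eqref{e:eqdualprod}, with $\overline{v}_i\in\partial g_i(L_i\overline{x}-r_i)$, $x_n\weakly\overline{x}$, and $v_{i,n}\weakly\overline{v}_i$. For the primal statement, the subdifferential sum and chain rule inclusions (which require no constraint qualification) give $\sum_{i=1}^m\omega_iL_i^*(\partial g_i)(L_i\cdot-r_i)\subset\partial\big(\sum_{i=1}^m\omega_i g_i(L_i\cdot-r_i)\big)$, so any solution of \eqref{e:2010-11-11x} is a zero of $\partial\big(\sum_{i=1}^m\omega_i g_i(L_i\cdot-r_i)-\scal{\cdot}{z}\big)$ and hence minimizes \eqref{e:primalvar}; alternatively, one transports Proposition~\ref{p:cq}\ref{p:cqi} to the product space used in the proof of Theorem~\ref{t:2010-11-11}, where $\boldsymbol A=N_{\boldsymbol V}=\partial\iota_{\boldsymbol V}$ is itself a subdifferential and $\boldsymbol B=\partial\boldsymbol g$ with $\boldsymbol g\colon\boldsymbol y\mapsto\sum_{i=1}^m\omega_ig_i(y_i)$. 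For the dual, the constraint $\sum_{i=1}^m\omega_iL_i^*\overline{v}_i=z$ in \eqref{e:dualvar} is exactly the one recorded in \eqref{e:eqdualprod}; transporting Proposition~\ref{p:cq}\ref{p:cqii} to the same product space (using $\iota_{\boldsymbol V}^*=\iota_{\boldsymbol V^{\bot}}$, so that the lifted dual objective restricts on that constraint set to $\sum_{i=1}^m\omega_i(g_i^*(v_i)+\scal{v_i}{r_i})$) shows that $(\overline{v}_i)_{1\leq i\leq m}$ minimizes \eqref{e:dualvar}. This establishes \ref{t:2010-11-11ivar}.

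For \ref{t:2010-11-11iiivar}, if $g_i^*$ is strongly convex at $\overline{v}_i$ then $\partial g_i^*=B_i^{-1}$ is strongly monotone at $\overline{v}_i$ by \cite[Section~3.4]{Zali02}, so the hypothesis of Theorem~\ref{t:2010-11-11}\ref{t:2010-11-11iii} is satisfied and $v_{i,n}\to\overline{v}_i$ for every $i\in\{1,\ldots,m\}$. The only point that needs attention is purely bookkeeping: keeping the weights $(\omega_i)_{1\leq i\leq m}$ consistent between the inner products of the product spaces $\HHH$ and $\GGG$ and the objectives \eqref{e:primalvar}--\eqref{e:dualvar}, so that the inclusion ``$\zer\subset\Argmin$'' on the product space projects correctly back to the diagonal. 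Since this is exactly the correspondence \eqref{e:valparaiso2010-11-23a}--\eqref{e:valparaiso2010-11-23b} already verified in the proof of Theorem~\ref{t:2010-11-11}, no new difficulty arises, and the proof reduces to combining Theorem~\ref{t:2010-11-11} with Proposition~\ref{p:cq}.
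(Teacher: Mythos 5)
Your proof is correct. It differs from the paper's only in which intermediate result it factors through. The paper obtains Proposition~\ref{p:4} by applying Proposition~\ref{p:2010-11-12} (the two-function Fenchel--Rockafellar splitting) in the product spaces $\HHH$ and $\GGG$ with $\boldsymbol{f}=\iota_{\boldsymbol{V}}$ and $\boldsymbol{g}\colon\boldsymbol{y}\mapsto\sum_{i=1}^m\omega_ig_i(y_i)$, so that the passage from zeros of subdifferential inclusions to minimizers is inherited wholesale from Proposition~\ref{p:cq} as already packaged in Proposition~\ref{p:2010-11-12}. You instead invoke Theorem~\ref{t:2010-11-11} with $B_i=\partial g_i$ and then carry out the $\zer\subset\Argmin$ translation yourself: for the primal, via the constraint-qualification-free inclusion $\sum_{i=1}^m\omega_iL_i^*(\partial g_i)(L_i\cdot-r_i)\subset\partial\big(\sum_{i=1}^m\omega_ig_i(L_i\cdot-r_i)\big)$, and for the dual, by lifting Proposition~\ref{p:cq}\ref{p:cqii} to the product space using $\boldsymbol{A}^{-1}=\partial\iota_{\boldsymbol{V}^{\bot}}$ so that the constraint $\sum_{i=1}^m\omega_iL_i^*v_i=z$ in \eqref{e:dualvar} is recovered from \eqref{e:eqdualprod}. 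Both routes rest on the same diagonal-subspace decomposition; the paper's is shorter on the page because the inclusion-to-minimization step is done once and for all in Proposition~\ref{p:cq}, while yours makes that step explicit for the $m$-term problem and is equally rigorous. Your treatment of \ref{t:2010-11-11iiivar} (strong convexity of $g_i^*$ at $\overline{v}_i$ gives strong, hence uniform, monotonicity of $\partial g_i^*=B_i^{-1}$ there, feeding Theorem~\ref{t:2010-11-11}\ref{t:2010-11-11iii}) matches the paper's argument.
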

\begin{proof}
Define $\HHH$, $\GGG$, $\boldsymbol{L}$, $\boldsymbol{V}$, 
$\boldsymbol{r}$, and $\boldsymbol{j}\colon\HH\to\boldsymbol{V}$ 
as in the proof of Theorem~\ref{t:2010-11-11}.
Moreover, set $\boldsymbol{f}=\iota_{\boldsymbol{V}}$ and 
$\boldsymbol{g}\colon\GGG\to\RX\colon\boldsymbol{y}\mapsto
\sum_{i=1}^m\omega_i\,g_i(y_i)$.
Then, $\boldsymbol{f}\in\Gamma_0(\HHH)$, 
$\boldsymbol{g}\in\Gamma_0(\GGG)$, 
$\boldsymbol{f}^*=\iota_{\boldsymbol{V}^{\bot}}$, 
and $\boldsymbol{g}^*\colon\boldsymbol{v}\mapsto
\sum_{i=1}^m\omega_i\,g_i^*(v_i)$. Therefore, 
\eqref{e:2010-11-03I} is equivalent to
\begin{equation}
\boldsymbol{j}(z)\in\ran
\big(\partial\boldsymbol{f}+\boldsymbol{L}^*\circ
(\partial\boldsymbol{g})\circ(\boldsymbol{L}\cdot-
\boldsymbol{r})\big).
\end{equation}
Furthermore, \eqref{e:primalvar} and \eqref{e:dualvar} are 
equivalent to
\begin{equation}
\label{e:2010-11-13fvar}
\minimize{\boldsymbol{x}\in\HHH}
{\boldsymbol{f}(\boldsymbol{x})+\boldsymbol{g}
(\boldsymbol{L}\boldsymbol{x}-\boldsymbol{r})-
\scal{\boldsymbol{x}}{\boldsymbol{j}(z)}}_{\HHH}
\end{equation}
and
\begin{equation}
\label{e:2010-11-13gvar}
\minimize{\boldsymbol{v}\in\GGG}
{\boldsymbol{f}^*(\boldsymbol{j}(z)-
\boldsymbol{L}^*\boldsymbol{v})+\boldsymbol{g}^*(\boldsymbol{v})
+\scal{\boldsymbol{r}}{\boldsymbol{v}}}_{\GGG},
\end{equation}
respectively. On the other hand since, for every $\gamma\in\RPP$,
$\prox_{\gamma\boldsymbol{f}}\colon
\boldsymbol{x}\mapsto{\boldsymbol j}(\sum_{i=1}^m\omega_ix_i)$ and 
$\prox_{\gamma\boldsymbol{g}^*}=
(\prox_{\gamma g_i^*})_{1\leq i\leq m}$, 
\eqref{e:2010-11-11var} is a particular case of 
\eqref{e:2010-11-13d}. Finally, in 
\ref{t:2010-11-11iiivar}, $\boldsymbol{g}^*$ is strongly, hence 
uniformly, convex at $\overline{\boldsymbol{v}}$. Altogether, 
the results follow from Proposition~\ref{p:2010-11-12}.
\end{proof}

\begin{remark}
Suppose that \eqref{e:primalvar} has a solution and that
\begin{equation}
\label{e:condqualprod}
(r_1,\ldots,r_m)\in\sri\menge{(L_1x-y_1,\ldots,L_mx-y_m)}
{x\in\HH,\,y_1\in\dom g_1,\ldots,\,y_m\in\dom g_m}. 
\end{equation}
Then, with the notation of the proof of 
Proposition~\ref{p:4}, \eqref{e:condqualprod} is 
equivalent to
$\boldsymbol{r}\in
\sri(\boldsymbol{L}(\boldsymbol{V})-\dom\boldsymbol{g})
=\sri(\boldsymbol{L}(\dom\boldsymbol{f})-\dom\boldsymbol{g})$.
Thus, Proposition~\ref{p:cq}\ref{p:cqiiia} 
asserts that \eqref{e:2010-11-03I} holds. 
\end{remark}


\begin{thebibliography}{99}
\setlength{\itemsep}{1pt} 

\small

\bibitem{Aldu05}  
G. Alduncin,
Composition duality principles for mixed variational inequalities,
{\em Math. Comput. Modelling,}
vol. 41, pp. 639--654, 2005.

\bibitem{Sico10}
H. Attouch, L. M. Brice\~no-Arias, and P. L. Combettes,
A parallel splitting method for coupled monotone inclusions,
{\em SIAM J. Control Optim.},
vol. 48, pp. 3246--3270, 2010. 

\bibitem{Atto96}  
H. Attouch and M. Th\'era,
A general duality principle for the sum of two operators,
{\em J. Convex Anal.,}
vol. 3, pp. 1--24, 1996. 

\bibitem{Ther96}  
H. Attouch and M. Th\'era,
A duality proof of the Hille-Yosida theorem,
in: {\em Progress in Partial Differential Equations: the Metz
Surveys,} vol. 4, pp. 18--35, 1996.

\bibitem{Moor01} 
H. H. Bauschke and P. L. Combettes,
A weak-to-strong convergence principle for Fej\'er-monotone methods 
in Hilbert spaces,
{\em Math. Oper. Res.,} 
vol. 26, pp. 248--264, 2001.

\bibitem{Livre1} 
H. H. Bauschke and P. L. Combettes,
{\em Convex Analysis and Monotone Operator Theory in Hilbert Spaces.}
Springer-Verlag, New York, 2011.

\bibitem{Reic05}
H. H. Bauschke, P. L. Combettes, and S. Reich,
The asymptotic behavior of the composition of two resolvents,
{\em Nonlinear Anal.},
vol. 60, pp. 283--301, 2005.

\bibitem{Chen94}
G. Chen and M. Teboulle,
A proximal-based decomposition method for convex 
minimization problems,
{\em Math. Programming,}
vol. 64, pp. 81--101, 1994.

\bibitem{Else01} 
P. L. Combettes,
Quasi-Fej\'erian analysis of some optimization algorithms, in:
{\em Inherently Parallel Algorithms for Feasibility and 
Optimization},
D. Butnariu, Y. Censor, S. Reich (eds.).
Elsevier, Amsterdam, pp. 115--152, 2001. 

\bibitem{Opti04}
P. L. Combettes, 
Solving monotone inclusions via compositions of nonexpansive 
averaged operators,
{\em Optimization,}
vol. 53, pp. 475--504, 2004.

\bibitem{Joca09}
P. L. Combettes, 
Iterative construction of the resolvent of a sum of 
maximal monotone operators, 
{\em J. Convex Anal.,}
vol. 16, pp. 727--748, 2009.

\bibitem{Banf09}
P. L. Combettes and J.-C. Pesquet,
Proximal splitting methods in signal processing, in: 
{\em Fixed-Point Algorithms for Inverse Problems in Science 
and Engineering,}
H. H. Bauschke {\em et al.} (eds.).
Springer-Verlag, New York, 2011.

\bibitem{Smms05}
P. L. Combettes and V. R. Wajs,
Signal recovery by proximal forward-backward splitting,
{\em Multiscale Model. Simul.},
vol. 4, pp.~1168--1200, 2005. 

\bibitem{Ecks92} 
J. Eckstein and D. P. Bertsekas, 
On the Douglas-Rachford splitting method and the proximal 
point algorithm for maximal monotone operators,
{\em Math. Programming,}
vol. 55, pp. 293--318, 1992.

\bibitem{Ecks99} 
J. Eckstein and M. C. Ferris,
Smooth methods of multipliers for complementarity problems,
{\em Math. Programming,}
vol. 86, pp. 65--90, 1999.

\bibitem{Svai08} 
J. Eckstein and B. F. Svaiter, 
A family of projective splitting methods for the sum of two
maximal monotone operators,
{\em Math. Programming,}
vol. 111, pp. 173--199, 2008.

\bibitem{Ekel99} 
I. Ekeland and R. Temam,
{\em Analyse Convexe et Probl\`emes Variationnels,}
Dunod, Paris, 1974; 
{\em Convex Analysis and Variational Problems,}
SIAM, Philadelphia, PA, 1999.

\bibitem{Facc03} 
F. Facchinei and J.-S. Pang,
{\em Finite-Dimensional Variational Inequalities and 
Complementarity Problems.}
Springer-Verlag, New York, 2003.

\bibitem{Fuku96} 
M. Fukushima, 
The primal Douglas-Rachford splitting algorithm for a class of 
monotone mappings with applications to the traffic equilibrium 
problem,
{\em Math. Programming,}
vol. 72, pp. 1--15, 1996.

\bibitem{Fort83} 
M. Fortin and R. Glowinski (eds.), 
{\em Augmented Lagrangian Methods: Applications to the 
Numerical Solution of Boundary Value Problems.}
North-Holland, Amsterdam, 1983.

\bibitem{Gaba83} 
D. Gabay,
Applications of the method of multipliers to variational 
inequalities, in: M. Fortin and R. Glowinski (eds.), 
{\em Augmented Lagrangian Methods: Applications to the 
Numerical Solution of Boundary Value Problems,}
pp. 299--331. North-Holland, Amsterdam, 1983.

\bibitem{Glow89}
R. Glowinski and P. Le Tallec (eds.),
{\em Augmented Lagrangian and Operator-Splitting
 Methods in Nonlinear Mechanics.}
SIAM, Philadelphia, 1989.

\bibitem{Lion79} 
P.-L. Lions and B. Mercier, 
Splitting algorithms for the sum of two nonlinear operators,
{\em SIAM J. Numer. Anal.},
vol. 16,  pp. 964--979, 1979.

\bibitem{Mcli74} 
L. McLinden,
An extension of Fenchel's duality theorem to saddle functions
and dual minimax problems,
{\em Pacific J. Math.,}
vol. 50, pp. 135--158, 1974.

\bibitem{Merc79} 
B. Mercier, 
{\em Topics in Finite Element Solution of Elliptic Problems}
(Lectures on Mathematics, no. 63).
Tata Institute of Fundamental Research, Bombay, 1979.

\bibitem{Merc80} 
B. Mercier,  
{\em In\'equations Variationnelles de la M\'ecanique}
(Publications Math\'ematiques d'Orsay, no. 80.01).
Universit\'e de Paris-XI, Orsay, France, 1980. 

\bibitem{Mosc72} 
U. Mosco, 
Dual variational inequalities, 
{\em J. Math. Anal. Appl.,}
vol. 40, pp. 202--206, 1972.

\bibitem{Penn00} 
T. Pennanen, 
Dualization of generalized equations of maximal monotone type,
{\em SIAM J. Optim.,}
vol. 10, pp. 809--835, 2000.

\bibitem{Penn02} 
T. Pennanen,
A splitting method for composite mappings,
{\em Numer. Funct. Anal. Optim.},
vol. 23, pp. 875--890, 2002.

\bibitem{Robi99} 
S. M. Robinson, 
Composition duality and maximal monotonicity,
{\em Math. Programming,}
vol. 85, pp. 1--13, 1999.

\bibitem{Robi01} 
S. M. Robinson, 
Generalized duality in variational analysis, in:
N. Hadjisavvas and P. M. Pardalos (eds.),
{\em Advances in Convex Analysis and Global Optimization,}
pp. 205--219. Dordrecht, The Netherlands, Kluwer, 2001.

\bibitem{Rock67} 
R. T. Rockafellar,
Duality and stability in extremum problems involving convex 
functions,
{\em Pacific J. Math.,}
vol. 21, pp. 167--187, 1967. 

\bibitem{Roc76a} R. T. Rockafellar, 
Monotone operators and the proximal point algorithm,
{\em SIAM J. Control Optim.,}
vol. 14, pp. 877--898, 1976.

\bibitem{Roc76b}
R. T. Rockafellar,
Augmented Lagrangians and applications of the proximal point 
algorithm in convex programming,
{\it Math. Oper. Res.,}
vol. 1, 97--116, 1976.

\bibitem{Rock74} 
R. T. Rockafellar,
{\em Conjugate Duality and Optimization.}
SIAM, Philadelphia, PA, 1974.

\bibitem{Svai10} 
B. F. Svaiter, 
Weak convergence on Douglas-Rachford method,
{\em SIAM J. Control Optim.,}
to appear.

\bibitem{Tsen90} 
P. Tseng, Further applications of a splitting algorithm to
decomposition in variational inequalities and convex programming,
{\em Math. Programming,}
vol. 48, no. 2, pp. 249--263, 1990.

\bibitem{Tsen91} 
P. Tseng, Applications of a splitting algorithm to
decomposition in convex programming and variational inequalities,
{\em SIAM J. Control Optim.,}
vol. 29, pp. 119--138, 1991.

\bibitem{Tsen00} 
P. Tseng, 
A modified forward-backward splitting method for 
maximal monotone mappings, 
{\em SIAM J. Control Optim.}, 
vol. 38, pp. 431--446, 2000.

\bibitem{Zali02} 
C. Z\u{a}linescu,
{\em Convex Analysis in General Vector Spaces,}
World Scientific, River Edge, NJ, 2002.

\bibitem{ZeidXX} 
E. Zeidler,
{\em Nonlinear Functional Analysis and Its Applications,} 
vols. I--V.
Springer-Verlag, New York, 1985--1993.
\end{thebibliography}
\end{document}